\newtheorem{theorem}{\sffamily Theorem}
\newtheorem{remark}{\sffamily Remark}
\newtheorem{definition}{\sffamily Definition}
\newtheorem{lemma}{\sffamily Lemma}
\newtheorem{proposition}{\sffamily Proposition}
\newtheorem{corollary}{\sffamily Corollary}
\newcommand{\pa}{\partial}
\newcommand{\be}{\begin{equation}}
\newcommand{\ee}{\end{equation}}
\newcommand{\ba}{\begin{aligned}}
\newcommand{\ea}{\end{aligned}}
\newcommand{\bea}{\begin{eqnarray}}
\newcommand{\eea}{\end{eqnarray}}
\newcommand{\Gammac}{{\Gamma_\mathbb{C}}}
\newcommand{\tGamma}{\tilde{\Gamma}}
\newcommand{\tGammat}{\tilde{\Gamma}_{\textrm{trunc}}}
\newcommand{\Ltrunc}{L_{\textrm{trunc}}}
\newcommand{\hGamma}{\widehat{\Gamma}}
\newcommand{\tgamma}{\tilde{\gamma}}
\newcommand{\erfc}{\textrm{erfc}}
\newcommand{\eps}{\varepsilon}
\newcommand{\sgn}{\operatorname{sgn}}
\newcommand{\IInt}{\operatorname{int}}
\newcommand{\tsigma}{\tilde{\sigma}}
\newcommand{\bx}{\boldsymbol{x}}
\newcommand{\by}{\boldsymbol{y}}
\newcommand{\bn}{\boldsymbol{n}}
\newcommand{\cG}{\mathcal{G}}
\newcommand{\bgamma}{\boldsymbol{\gamma}}
\newcommand{\tbgamma}{\tilde{\boldsymbol{\gamma}}}
\newcommand{\uin}{{u^{\mathrm{in}}}}
\newcommand{\kfunc}{{g}}
\newcommand{\cD}{\mathcal{D}}
\newcommand{\bbC}{\mathbb{C}}
\newcommand{\cspace}{\mathcal{C}}
\begin{document}

\begin{titlepage}

  \raggedleft
  {\sffamily \bfseries STATUS: arXiv pre-print}
  
  \hrulefill

  \raggedright
  \begin{textblock*}{\linewidth}(1.25in,2in) % {block width} (coords) 
    {\LARGE \sffamily \bfseries Coordinate complexification for the Helmholtz equation with Dirichlet boundary conditions in a perturbed half-space}
  \end{textblock*}

  \normalsize

  \vspace{2in}
  Charles L. Epstein\\
   \emph{\small Center for Computational Mathematics, Flatiron Institute\\
    New York, NY 10010}\\
  \texttt{\small cepstein@flatironinstitute.org}
 
   \vspace{\baselineskip}
Leslie Greengard\\
   \emph{\small Courant Institute, NYU\\ New York, NY 10012\\and\\Center for Computational Mathematics, Flatiron Institute\\
    New York, NY 10010}\\
  \texttt{\small greengard@courant.nyu.edu} 
  
   \vspace{\baselineskip}
   Jeremy Hoskins\\
     \emph{\small Department of Statistics and CCAM, University of Chicago\\Chicago, IL 60637}\\
  \texttt{\small jeremyhoskins@uchicago.edu} 
 
   \vspace{\baselineskip}
  Shidong Jiang\\
   \emph{\small Center for Computational Mathematics, Flatiron Institute\\
    New York, NY 10010}\\
  \texttt{\small sjiang@flatironinstitute.org}
  
  \vspace{\baselineskip}
  Manas Rachh\\
  %\footnote{Corresponding author.}\\
  \emph{\small Center for Computational Mathematics, Flatiron Institute\\
    New York, NY 10010}\\
  \texttt{\small mrachh@flatironinstitute.org}

  \begin{textblock*}{\linewidth}(1.25in,8.5in) % {block width} (coords) 
    \today
  \end{textblock*}

\end{titlepage}

\begin{abstract}
We present a new complexification scheme based on the classical double layer potential for the solution of the  Helmholtz equation with Dirichlet boundary conditions in compactly perturbed half-spaces in two and three dimensions.   The kernel for the double layer potential is the normal derivative of the  free-space Green's function, which has a well-known analytic continuation into the complex plane as a function of both target and source locations.  Here, we prove that -  when the incident data are analytic and satisfy a precise asymptotic estimate - the solution to the boundary integral equation itself  admits an analytic continuation into specific regions of the complex plane, and satisfies a related asymptotic estimate (this class of data includes both plane waves and the field induced by point sources). We then show that, with a carefully chosen contour deformation,  the oscillatory integrals are converted to exponentially decaying integrals,  effectively reducing the infinite domain to a domain of finite size.  Our scheme is different from existing methods that use complex coordinate transformations, such as perfectly matched  layers, or absorbing regions, such as the gradual complexification of  the governing wavenumber. More precisely, in our method, we are still solving a boundary integral equation, albeit on a truncated, complexified version of the original  boundary. In other words, no volumetric/domain modifications are introduced.  The scheme can be extended to other boundary conditions, to open wave guides and to layered media. We illustrate the performance of the scheme with  two and three dimensional examples.
\vspace*{1ex}

\noindent {\bf Keywords}: analytic continuation, coordinate complexification, the Helmholtz equation, layered media, integral equation methods, infinite boundary.

\end{abstract}

\pagestyle{myheadings}
\thispagestyle{plain}
\markboth{C. Epstein, L. Greengard, J. Hoskins, S. Jiang, and M. Rachh}
{Coordinate Complexification for the Helmholtz Dirichlet problem in a perturbed half-space}

%%%%%%%%%%%%%%%%%%%%%%%%%%%%%%%%%%%%%%%%%%%%%%
\section{Introduction} \label{introduction}

In this paper we consider the classical problem of sound soft time-harmonic wave scattering from an infinite boundary. 
As a model, consider the region to be the compactly perturbed upper half space in two dimensions, denoted by $\Omega$ with boundary $\Gamma$. The scattered field $u$ satisfies,
\begin{equation}
\label{eq:bvp}
\begin{aligned}
(\Delta + k^2)u(\bx) & = 0 \, , & \bx \in \Omega, \\
u(\bx) &=f(\bx) \, ,&  \bx \textrm{ on } \Gamma, \\
\lim_{|\bx| \to \infty} |\bx|^{1/2} (\pa_{|\bx|} -ik)u(\bx) &=0 \, ,& \textrm{uniformly in angle},
\end{aligned}
\end{equation}
where $f$ satisfies suitable conditions at infinity. Such problems arise frequently in modeling acoustic and electromagnetic scattering from large obstacles and layered media.  

For scattering from compact obstacles, boundary integral equations (BIEs) are a robust and mature class of techniques. Rather than discretizing the partial differential equation (PDE) directly, the scattered field is represented using single and double layer potentials with unknown densities. In addition to reducing the dimension of the unknowns, this reduction typically leads to discretized linear systems which are as well-conditioned as the underlying physical problem. Unlike PDE-based methods however, the discretized BIEs give rise to dense matrices. For large systems this necessitates the use of fast algorithms either for applying these matrices quickly (see for example~\cite{brandt1990multilevel,brandt1998multilevel,bruno1, bruno2,carrier1988sisc,crutchfield2006remarks,darden1993particle,eastwood1974shaping,fong2009black,greengard2023dual,greengard1987jcp, hockney2021computer,malhotra2015pvfmm,rokhlin1990rapid,toukmaji1996ewald}) or to construct a compressed representation of the inverse (see for example~\cite{gillman-dir_hss-2012,ho-2012,martinsson-2005,kong-hodlrdir-2011,Mar-hss-2011,hackbusch-h-1999,hackbusch-h-2000,Borm-h-2003,ambikasaran-2014,Borm-h2-2010,hackbusch-h2-2000,
  Jiao-h2-ce-2017,minden-str_scel-2017,coulier-ifmm_prec-2017,Beb-hlu-2005,borm-h2lu-2013,Hakb-h2-lib,minden-str_scel-2017,jiang2022skel,sushnikova-ce-2018,sushnikova2023fmm}). These techniques apply, for example, to the Laplace, Stokes, Helmholtz, Maxwell, Dirac, and elastic wave equations, subject to a variety of boundary conditions, e.g. Dirichlet, Neumann, Robin, transmission, traction, and mixed, depending on the context. Moreover, there are a number of modifications of these methods that permit the treatment of domains with corners, edges, and multiple junctions (see for example~\cite{helsing2013solving,serkh2016solution,ginn2023lightning,hoskins2019numerical,hoskins2020solution,bremer2012nystrom}).

The application of BIEs to non-compact scatterers, such as half-plane problems and infinite interfaces, poses significant challenges both algorithmically and analytically. In such cases the data and/or the solution of the integral equation can decay slowly and even the invertibility of the integral equation can be a subtle question. Over the past several decades, a number of methods have been developed to circumvent these issues.
One of the standard approaches is to discretize the problem partially in the Fourier domain using {\it Sommerfeld integrals}~\cite{sommerfeld1909,van1935,weyl1919,cai2013computational,okhmatovski2004evaluation,paulus2000accurate,perez2014high,ochmann2004complex,koh2006exact,o2014efficient}. These methods tend to be inefficient in the presence of scatterers or point sources close to the infinite interfaces, which lead to slow decay in the Sommerfeld representation of the solution. These issues have been effectively addressed using {\it windowed Green's function} approaches, which use partitions of unity to discretize the perturbation in physical space and the infinite flat part of the geometry in Fourier space~\cite{bruno2016windowed,lai2018new}. 
While effective in practice, these methods don't directly address the solvability of either
the half-space integral equation or its truncated counterpart.
After discretization, there have also been advances in the design of fast algorithms for the solution of the corresponding linear systems  \cite{chohuangchencai}.

Half-space matching methods are another class of methods that have been shown to be effective for solving scattering problems on infinite interfaces with dissipative media ($\Im (k) > 0$)~\cite{dhia2018halfspace}. The idea is to represent the solution via their unknown traces on boundaries of intersecting half-spaces (the interior of which encloses the perturbation of the boundary), and use the dissipation to effectively truncate the integral equations. This work was extended to the case of non-dissipative media ($\Im (k) = 0$) by solving for the trace of the solution along complexified contour where the solution decays due to its outgoing nature~\cite{bonnet2022complex,bonnet2022half}. Extensions of this approach, and the analysis therein, can be particularly challenging for more complicated settings such as transmission problems, and layered media. In~\cite{Bonnet2024water_waves} a complexification method is used, in the context of linear water waves, to solve the Laplace equation with an unusual radiation condition in a strip in $\mathbb{R}^2$.

A related set of methods are perfectly matched layers (PMLs), which are often used for truncating unbounded regions for PDE discretizations of the problem. These methods use the outgoing nature of the solutions to extend them to a two dimensional manifold in $\mathbb{C}^{2}$ where the solution is exponentially decaying and can be effectively truncated~\cite{berenger1994perfectly,chew19943d,dyatlov2019mathematical}. 

In this paper, we take a different approach, `complexifying' the coordinates of the interface itself. This is closely related to the work of \cite{lu2018perfectly}, which the authors refer to as a PML-based boundary integral method. 
The main contribution of this paper is a proof of invertibility of the integral equation in suitable spaces which encode the outgoing nature of the data and solutions. 
We show that the complexified integral equation is equivalent to the original
equation and that its truncation introduces an exponentially small error. The 
approach is simple to implement, involving a small and straightforward modification of existing methods, and
extends naturally to transmission problems, layered media, waveguides, leaky quantum graphs, and certain models of edge states in insulating media.

\section{Preliminaries}
\label{sec:prelim}
Let us first consider half-space problems in which the boundary is a compact perturbation of the $x_{1}$-axis ($\{(x_1,x_2)\in\mathbb{R}^2\,|\, x_2 = 0\}$). In particular, we suppose the boundary $\Gamma$ is the $x_1$-axis in $\mathbb{R}^{2}$, except for the interval
$x_{1} \in [-L+\delta, L -\delta]$ for some $\delta > 0$, where $\Gamma$ is
parameterized by $\bgamma(t) : \mathbb{R} \to \mathbb{R}^2$, with $|\gamma_{1}(s)| \leq L-\delta$ for all $|s|\leq L-\delta$, and $\bgamma(s) = (s,0)$ for all $|s| \geq L-\delta$. Let $\Omega$ denote the infinite region corresponding to the upper half space whose boundary is $\Gamma$, see~\cref{fig:boundary}. Let $\bn$ denote the positively oriented normal to the boundary $\bgamma$, which for $x_{1} \in \mathbb{R} \setminus [-L+\delta,L-\delta]$ is given by 
$\bn = (0,-1)$. 
\begin{figure}[!ht]
\begin{center}
\includegraphics[width=0.8\linewidth]{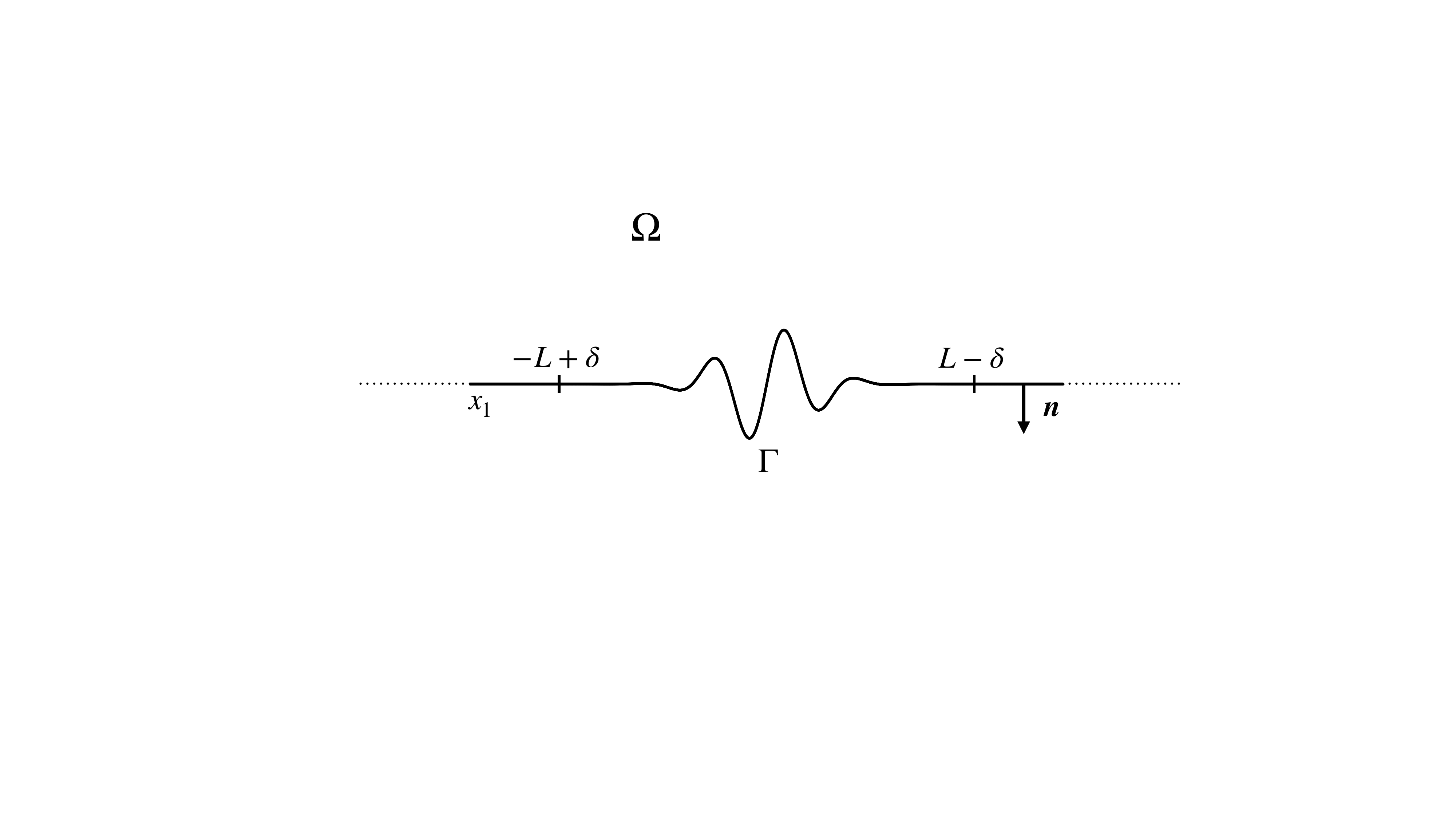}
\caption{Perturbed half-space geometry}\label{fig:boundary}
\end{center}
\end{figure}

\subsection{Boundary integral equations for Helmholtz  Dirichlet problems}
A standard approach to solve~\cref{eq:bvp} is to represent the potential $u$ via a {\it Helmholtz double layer potential} denoted by $\cD_{\Gamma}[\sigma]$, and given by,
\begin{equation}
u(\bx) = \cD_{\Gamma}[\sigma](\bx) = \frac{ik}{4} \int_{\Gamma} \frac{H_{1}^{(1)}(k|\bx - \by|) ((\bx - \by) \cdot \bn(\by))}{|\bx- \by|}\, \sigma(\by)\, {\rm d}s_{\by} \, ,
\end{equation}
where $H_{1}^{(1)}(z)=-\pa_z H^{(0)}_1(z)$ denotes the Hankel function of the first kind and order $1$. 
For clarity of notation in the following we omit the superscripts on the Hankel functions.

Upon imposing the boundary condition, and using the standard jump relations for the Helmholtz double layer potential~\cite{colton}, we obtain the following integral equation for the density $\sigma$, 
\begin{equation}\label{eqn:first_BIE}
-\frac{1}{2} \sigma(\bx) + D_{\Gamma}[\sigma](\bx) = f(\bx) \, , \quad  \bx \in \Gamma \,  ,
\end{equation}
where $D_{\Gamma}[\sigma]$ is the principal value part of the double layer potential restricted to the boundary $\Gamma$.

\section{Intuition for coordinate complexification}

In this section we give an informal description of the coordinate complexification method. We first temporarily set aside solvability of the boundary integral equation~\eqref{eqn:first_BIE}, supposing for now that a solution $\sigma$ exists and has sufficient algebraic decay at infinity.
Rearranging the terms in~\eqref{eqn:first_BIE}, we see that
\begin{equation}
\sigma(\bx) = -2f(\bx) + 2D_{\Gamma}[\sigma](\bx) \, , \quad  \bx \in \Gamma \, .
\end{equation}
The key idea of this work is to observe that if $f$ admits an analytic extension to the upper half plane for $x_{1}>L$, and lower half plane with $x_{1}<-L$, and satisfies the estimate
\begin{equation}
f((x_{1} + \textrm{sgn}(x_{1}) \eta,0)) \lesssim e^{-k\eta}/(1+|x_{1}|)^{1/2} \, ,\quad |x_{1}|>L, \eta>0 \,,
\end{equation}
then the density $\sigma$ also admits an
analytic extension to the upper half plane for $x_{1}>L$, and lower half plane for $x_{1}<-L$ and satisfies a similar estimate. This is evident from the fact that $D_{\Gamma}[\sigma](\bx)$ can be analytically continued to $(x_{1}+i\textrm{sgn}(x_{1})\eta,0)$ for $\eta>0$, and $|x_{1}|>L$ with its kernel satisfying the estimate
\begin{equation}
\label{eq:kern-decay}
\kfunc(x_{1}+i\textrm{sgn}(x_{1}) \eta, s)\lesssim e^{-k\eta}/(1+|x_{1}|)^{1/2} \, .
\end{equation}

The choice of extension of the solution, and the behavior of the kernel in those regions of the complex plane follows from the fact that the scattered field is outgoing, and hence both the Green's function $\Phi$ of the Helmholtz equation, and the scattered field behave like 
\begin{equation}
u, \Phi \sim \frac{e^{ik |x_{1}|}}{\sqrt{|x_{1}|}} \, ,\quad \text{as} \, x_{1} \to \pm \infty \, .
\end{equation}
Note that, for $x_1\in [0,\infty),$ $|x_1|=\sqrt{x_1^2},$  and for $x_1t\geq 0,$
\begin{equation}
    \Im\sqrt{(x_1+it)^2}= \Im\sqrt{x^2_1-t^2+2ix_1t}\geq 0.
\end{equation}
Therefore
it  follows that  that the analytic continuations both $u$ and $\Phi$ decay exponentially as in~\cref{eq:kern-decay} in the upper half plane for positive $x_{1}$, and the lower half plane for negative $x_{1}$, with $|x_{1}|$ large enough in either case.  Here and throughout this paper $\sqrt{z}$ is defined in $\bbC\setminus (-\infty,0)$ with $\sqrt{x}>0$ for $x\in (0,\infty).$

It is natural to ask what sort of boundary data, $f,$ has such an extension, since the boundary data itself may not be outgoing as is the case with plane waves. It turns out that, after subtracting the contribution from Snell's law, the remaining boundary data is compactly supported on the perturbed part of the boundary, and hence admits an exponentially decaying extension to the required regions of the complex plane. Moreover, data generated by point or compactly supported volumetric sources in $\mathbb{R}^{2}$, which are bounded away from the interface, also naturally satisfy these estimates. These boundary conditions encompass a large set of relevant boundary value problems of interest.

Now since the density $\sigma$ formally admits an analytic extension to the upper half plane for $x_{1}>L$, and lower half plane for $x_{1}<-L$, it is tempting to deform the integral equation itself to complex coordinates and solve for the density for a different curve in the complex plane, rather than the real axis. This is advantageous since on the real axis the density admits at most algebraic decay of the form $(1+|x_{1}|)^{-1/2}$, which needs to be dealt with in some manner in order to obtain high accuracy solutions efficiently, i.e. without having to discretize the problem out to large enough $R\lesssim 1/\varepsilon^{2}$. In fact, ideally, it is desirable to have $O(1)$ unknowns describing the solution for $|x_{1}|\geq L$, since the interface is flat, it should be possible to exploit the analytic Green's function in this setting.  

Suppose that the density and the integral equation can be continued to a curve of the form $x_{1}+i \tau(x_{1})$, with the property that $\tau(x_{1})$ is strictly monotonically increasing, and $\tau'(x_{1})>c>0$ for $|x_{1}|>L$, with $\tau(x_{1}) = 0$ for $|x_{1}|\leq L$, then the density along this contour would be exponentially decaying as $|x_{1}| \to \infty$, thereby allowing for its efficient discretization for $|x_{1}|>L$. Let $\tGamma$ be the curve which agrees with $\Gamma$ for $|x_1|\leq L$, and is given by $(x_{1}+i\tau(x_{1}),0)$, for $|x_{1}|>L$, then the integral equation on the deformed contour is given by
\begin{equation}
-\frac{1}{2} \sigma(\bx) + \mathcal{D}_{\tGamma} [\sigma](\bx) = f(\bx) \,, \quad  \bx \in \tGamma \, .
\end{equation}
In this setting, since the density would now asymptotically behave as
\begin{equation}
\label{eq:sig_asymp}
\sigma((x_{1} + i\tau(x_{1}),0)) \sim e^{-k |\tau(x_{1})|}/(1+|x_{1}|)^{1/2} \, , \quad |x_{1}| \to \infty \, ,
\end{equation}
we are free to choose an appropriate contour $\tau(x_{1})$, and a discretization adapted to the exponential decay of the density such that it could be represented using $O(\log(1/\varepsilon))$ points on this part of the contour. 
The rest of the paper provides a rigorous analysis of this formal picture.

\section{Main results}\label{sec:main_res}

We require our data and densities to satisfy suitable analyticity and decay conditions. The following subsets of $\mathbb{C} \times \mathbb{R}$ play an essential role in defining these conditions, and are closely related to the notion of outgoing solutions to the Helmholtz problem.
\begin{definition}\label{def:gammas}
Given $\Gamma \subset \mathbb{R}^2$ and $L\in \mathbb{R}_+,$ as in \cref{fig:boundary}, we define the sets $\Gamma_{L}, \Gamma_{M}, \Gamma_R,$ and $\Gammac$ in $\mathbb{C}\times \mathbb{R}$ by
\begin{align*}
    \Gamma_{L}&:= \left\{(z,0) \in \mathbb{C}\times\mathbb{R}\,|\, \Re{z}< -L,\, \Im{z}  \le 0\right\},\\
        \Gamma_{M}&:= \left\{(z,y) \in \mathbb{C}\times\mathbb{R}\,|\, \Im{z}=0, \text{ and }(z,y) = (x,y)\in \Gamma,\, |z|\le L\right\},\\
            \Gamma_{R}&:= \left\{(z,0) \in \mathbb{C}\times\mathbb{R}\,|\, \Re{z}> L,\, \Im{z}  \ge 0\right\},
\end{align*}
and
$$\Gammac := \Gamma_L \cup \Gamma_M \cup \Gamma_R.$$
\end{definition}

It is also be convenient to define certain families of curves in $\mathbb{C}\times \mathbb{R}.$
\begin{definition}
Let $\mathcal{G}$ denote the set of all simple $C^\infty$ curves in $\Gammac$ satisfying the following conditions:
\begin{enumerate}[label=(\roman*)]
    \item If $\tGamma \in \mathcal{G},$ then $\tGamma$ goes to infinity in $\Gamma_L$ and $\Gamma_R,$ i.e. there exists a parameterization $\tbgamma$ of $\tGamma$ such that $\tbgamma(t) \subset \Gamma_L = (\tgamma_{1}(t),0)$ for $t$ sufficiently small and $\lim_{t \to -\infty} |\tgamma_1(t)|^2 = \infty.$ Similarly, $\tbgamma(t) \subset \Gamma_R = (\tgamma_{1}(t),0)$ for $t$ sufficiently large and $\lim_{t \to \infty} |\tgamma_1(t)|^2 = \infty.$ 
    \item If $\tGamma \in \mathcal{G}$ then for any $\bx,\by\in \tGamma,$ whenever either $\Im{x}_1$ or $\Im{y}_1$ is non-zero,
    $${\rm sgn}\, \Re (y_1-x_1) = {\rm sgn} \,\Im (y_1-x_1),$$
    where the subscript `1' denotes the first component. In particular, the imaginary part of $\tGamma$ is monotonically increasing in $\Gamma_L$ and $\Gamma_R$ as a function of its real part. 
\end{enumerate}
Finally, for any $\bx \in \Gammac$ we let $\mathcal{G}_{\bx}$ denote the subset of curves in $\mathcal{G}$ which pass through $\bx.$
\end{definition}

\begin{remark}
    We note that $\Gamma \in \mathcal{G},$ and so this set is non-empty. It also follows straightforwardly from the definitions that $\mathcal{G}_{\bx}$ is non-empty for any $\bx \in \Gammac.$
\end{remark}
The following spaces will be used in our analysis and the statements of our main results.
\begin{definition}
For any positive real numbers $\alpha$ and $\beta$ we define the space $\cspace^{\alpha,\beta}$ to be the set of all continuous functions on the closure of $\Gammac$ which are analytic in $\IInt\Gamma_L\cup \IInt\Gamma_R$, and further satisfy the condition
\begin{equation}
\sup_{\bx \in \overline{\Gammac}} e^{\beta\, |\Im \bx|}(1+|\bx|)^{\alpha}  |f(\bx) | < \infty \, .
\end{equation}
The norm on $\mathcal{C}^{\alpha,\beta}$ is given by
$$ \|f\|_{\alpha,\beta} :=\sup_{\bx \in \overline{\Gammac}} e^{\beta\, |\Im \bx|}(1+|\bx|)^{\alpha}  |f(\bx) |.$$
\end{definition}

Using the fact that a uniformly convergent sequence of analytic functions has an analytic limit, we easily prove the following proposition:
\begin{proposition}
   For any positive real numbers $\alpha$ and $\beta$  the space $\cspace^{\alpha,\beta}$,  with the norm $\|f\|_{\alpha,\beta}$ is a Banach space.
\end{proposition}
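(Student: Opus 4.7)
The plan is to carry out the standard completeness argument for a weighted sup-norm space, with the only extra content being that the analyticity condition on $\IInt\Gamma_L\cup\IInt\Gamma_R$ is preserved in the limit. First I would verify that $\|\cdot\|_{\alpha,\beta}$ is genuinely a norm: positive homogeneity and the triangle inequality follow immediately from the corresponding properties of $|\cdot|$ and the supremum, and definiteness follows because the weight $e^{\beta|\Im\bx|}(1+|\bx|)^\alpha$ is strictly positive and continuous, so $\|f\|_{\alpha,\beta}=0$ forces $f\equiv 0$ on $\overline{\Gammac}$. Linearity of the underlying space is immediate since sums and scalar multiples of continuous (resp.\ analytic, resp.\ weight-bounded) functions have the same properties.

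For completeness, let $\{f_n\}\subset\cspace^{\alpha,\beta}$ be a Cauchy sequence. The inequality
\begin{equation*}
|f_n(\bx)-f_m(\bx)| \;\le\; e^{-\beta|\Im\bx|}(1+|\bx|)^{-\alpha}\,\|f_n-f_m\|_{\alpha,\beta}
\end{equation*}
shows that $\{f_n(\bx)\}$ is Cauchy in $\mathbb{C}$ for each fixed $\bx\in\overline{\Gammac}$, so we may define $f(\bx):=\lim_n f_n(\bx)$ pointwise. Moreover, on any set where the weight is bounded below (in particular on every compact subset of $\overline{\Gammac}$), the convergence $f_n\to f$ is uniform. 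Standard reasoning then shows $f$ is continuous on $\overline{\Gammac}$: on each compact neighborhood of a point, $f$ is a uniform limit of continuous functions.

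Next I would check the two remaining properties of $\cspace^{\alpha,\beta}$. For analyticity on $\IInt\Gamma_L\cup\IInt\Gamma_R$, I invoke the fact quoted just before the statement (Weierstrass): since each $f_n$ is analytic there and the convergence is uniform on compact subsets of these open subsets of $\mathbb{C}$, the limit $f$ is analytic on $\IInt\Gamma_L\cup\IInt\Gamma_R$. For the weighted bound, fix $\varepsilon>0$ and choose $N$ so that $\|f_n-f_m\|_{\alpha,\beta}<\varepsilon$ for $n,m\ge N$. Then for every $\bx\in\overline{\Gammac}$,
\begin{equation*}
e^{\beta|\Im\bx|}(1+|\bx|)^\alpha|f_n(\bx)-f_m(\bx)| \;<\;\varepsilon.
\end{equation*}
Letting $m\to\infty$ (using pointwise convergence) gives $e^{\beta|\Im\bx|}(1+|\bx|)^\alpha|f_n(\bx)-f(\bx)|\le\varepsilon$ for every $\bx$, hence $\|f_n-f\|_{\alpha,\beta}\le\varepsilon$. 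In particular $f-f_N\in\cspace^{\alpha,\beta}$ so $f\in\cspace^{\alpha,\beta}$, and the same estimate shows $f_n\to f$ in norm.

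The only step with any subtlety is the preservation of analyticity, and this is precisely what the proposition's preamble defers to the standard theorem on uniformly convergent sequences of holomorphic functions; every other step is a direct translation of the classical proof that weighted $C_b$-type spaces are complete.
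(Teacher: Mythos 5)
Your proof is correct and takes the same route the paper sketches (the paper gives no detailed proof, only the remark that it follows easily from the Weierstrass theorem on uniform limits of analytic functions, which is exactly the only non-routine ingredient you invoke). One small simplification you could note: since $\alpha,\beta>0$, the weight $e^{\beta|\Im\bx|}(1+|\bx|)^{\alpha}$ is bounded below by $1$ everywhere on $\overline{\Gammac}$, so Cauchy in $\|\cdot\|_{\alpha,\beta}$ already gives uniform convergence on all of $\overline{\Gammac}$, not merely on compact subsets.
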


We also require an analytic extension of the double layer kernel and operator to certain regions of the complex plane.

\begin{definition}
    Suppose that $\Gamma$ and $L$ are as defined in \cref{sec:prelim}, and $\Gamma_{L,M,R,\mathbb{C}}$ as defined in \cref{def:gammas}. We define the function $\bn_{\mathbb{C}} : \Gammac \to \mathbb{C}^2$ by 
    \begin{align}
    \bn_\mathbb{C}(\bx):= \begin{cases}
        \bn(\bx) ,\quad & \bx \in \Gamma_M,\\
        (0,-1), \quad & \bx \in \Gamma_L,\Gamma_R,
    \end{cases}
    \end{align}
    and the function $\kfunc:\mathbb{C}^{2} \times \Gammac \to \mathbb{C}$ by
    \begin{equation}
\kfunc(\bx, \by) := -\frac{ikH_{1}^{(1)}(k  r(\bx,\by)) (\bx-\by)\cdot \bn_{\mathbb{C}}(\by)}{2r(\bx,\by)},
\end{equation}
where $r = \sqrt{(x_1-y_1)^2+(x_2-y_2)^2}$ is an analytically continued distance function.
\end{definition}
\begin{remark} Where the distance function is analytically continued, the assumption that $\sgn\Re{(y_1-x_1)}=\sgn\Im{(y_1-x_1)}$ implies that that $\Im{(x_1-y_1)^2}>0,$ hence the square root is well defined and lies in the first quadrant.
\end{remark}
\begin{definition}
Let $\tGamma \in \mathcal{G}$ and let $\bgamma:\mathbb{R} \to \tGamma$ be a smooth parameterization of $\tGamma.$ We define the integral operator $K_{\Gamma_{\mathbb{C}}}$ by
$$K_{\tGamma}[\sigma](\bx) = \int_{-\infty}^\infty \,\kfunc(\bx,\bgamma(s))\,\sigma(\bgamma(s))\,\sqrt{\gamma_1'(s)^2+\gamma_2'(s)^2}\,{\rm d}s, \quad \bx \in \tGamma.$$
For ease of notation we will also write this as 
$$K_{\tGamma}[\sigma](\bx) = \int_{\tGamma}\,\kfunc(\bx,\by)\,\sigma(\by)\,{\rm d}y.$$
\end{definition}

\begin{remark}\label{rem:vanish}
It follows immediately from the definitions of $k$ and $\bn_{\mathbb{C}}$ that 
$$\kfunc(\bx,\by)\equiv 0, \,\, {\rm if}\,\, \bx,\by \in \Gamma_L \cup \Gamma_R.$$
\end{remark}

We now state the main analytical result of this paper.
\begin{theorem}
\label{thm:main}
Let $\alpha \in (0,1/2)$ and $0<\beta\leq k$. For any $f\in \mathcal{C}^{\alpha,\beta},$ there exists a unique $\sigma \in \mathcal{C}^{\alpha,\beta}$ satisfying
\begin{align}\label{eq:on_curve}
\sigma(\bx) + K_{\tGamma}[\sigma](\bx) = f(\bx), \quad {\rm for\, all\,\,}\tGamma \in \mathcal{G}_{\bx},\,{\rm and}\,\,\bx \in \tGamma,
\end{align}
and~\cref{eq:on_curve} restricted to any curve $\tGamma \in \mathcal{G}_{\bx}$ has a unique solution.

Suppose further that $f\in \cspace^{1/2,k}$ and, along the real axis,  $f$ has  asymptotic expansions
\begin{equation}
\label{eq:asymp_f}
f(x_{1}) = \frac{e^{ik|x_{1}|}}{|x_{1}|^{\frac{1}{2}}}\sum_{\ell=0}^N \frac{a_\ell^\pm}{|x_{1}|^\ell} + \mathcal{O}\left(|x_{1}|^{-N-3/2}\right), \quad {\rm as}\, \pm x_1 \to \infty ,
\end{equation}
for any $N>0$.
Then the solution $\sigma$ above is also in $\cspace^{1/2,k}$, satisfies a similar estimate along $\Gamma$, and $\mathcal{D}_{\Gamma}[\sigma](\bx)$ is a solution of~\cref{eq:bvp}.
Moreover, for any curve $\tGamma \in \mathcal{G}$, 
$\mathcal{D}_{\tGamma}[\sigma](\bx)$ agrees with $\mathcal{D}_{\Gamma}[\sigma](\bx)$ for all $\bx \in \Omega \cap \{|x_{1}| < L\}$.
\end{theorem}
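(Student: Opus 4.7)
The plan is to first solve the integral equation on a single curve $\tGamma \in \mathcal{G}$ by the Fredholm alternative, and then use Cauchy's theorem in the tails to show that these per-curve solutions are restrictions of a single analytic object on $\Gammac$. Compactness of $K_{\tGamma}$ on $\mathcal{C}^{\alpha,\beta}$ comes from the large-argument Hankel estimate $|H_1^{(1)}(kr)|\lesssim (k|r|)^{-1/2}e^{-k\,\Im r}$: the monotonicity condition $\sgn\Re(y_1-x_1)=\sgn\Im(y_1-x_1)$ built into $\mathcal{G}$ forces $\Im r(\bx,\by)\geq 0$ on the tails with linear growth in $|\Re(y_1-x_1)|$, and \cref{rem:vanish} eliminates the kernel entirely when both arguments lie in $\Gamma_L\cup\Gamma_R$. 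Combined with the usual weak diagonal singularity of the double layer, this gives compactness by Arzel\`a--Ascoli plus a weighted tail estimate. Injectivity on the real axis $\Gamma$ is then classical: if $\sigma+K_\Gamma[\sigma]=0$ with $\sigma\in\mathcal{C}^{\alpha,\beta}$, then $u:=\mathcal{D}_\Gamma[\sigma]$ is a radiating Helmholtz solution in $\Omega$ with zero Dirichlet data (the weighted algebraic decay supplies the Sommerfeld condition), hence $u\equiv 0$, and the standard jump relation forces $\sigma\equiv 0$. The Fredholm alternative then produces a solution $\sigma_\Gamma\in\mathcal{C}^{\alpha,\beta}$ on $\Gamma$.

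\textbf{Contour independence.} I extend $\sigma_\Gamma$ to a function $\sigma$ on all of $\Gammac$ by $\sigma(\bx):=f(\bx)-K_\Gamma[\sigma_\Gamma](\bx)$; analyticity of $g(\cdot,\by)$ in $\IInt\Gamma_L\cup\IInt\Gamma_R$, together with the weighted decay, places $\sigma$ in $\mathcal{C}^{\alpha,\beta}$. To verify $\sigma+K_{\tGamma}[\sigma]=f$ for an arbitrary $\tGamma\in\mathcal{G}_\bx$, I reinterpret $K_{\tGamma}[\sigma](\bx)$ on each tail as the complex line integral $\int g(\bx,\by)\sigma(\by)\,d\by$ (the factor $\sqrt{\gamma_1'(s)^2+\gamma_2'(s)^2}$ in the definition of $K_{\tGamma}$ is exactly the analytic continuation of the arclength form) and apply Cauchy's theorem in the strip between $\tGamma$ and $\Gamma$. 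The integrand is jointly analytic in $\by$ there (the only potential singularity, $\by=\bx$, is shared by both curves), and the Hankel $e^{-k\Im r}$ decay combined with the $e^{-\beta|\Im\by|}$ decay of $\sigma$ annihilates the endpoint contributions at $\pm\infty$. Uniqueness on any single $\tGamma$ follows by the same deformation: a putative null solution on $\tGamma$ pulls back to a null solution on $\Gamma$, which must vanish by the previous paragraph.

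\textbf{Refined asymptotics and the BVP.} Under $f\in\mathcal{C}^{1/2,k}$ with expansion \eqref{eq:asymp_f}, I bootstrap $\sigma$ by substituting back into $\sigma=f-K_\Gamma[\sigma]$: \cref{rem:vanish} restricts the nontrivial part of the $y$-integral to $|y_1|\leq L$, so the $|x_1|\to\infty$ behavior of $K_\Gamma[\sigma](x_1)$ is obtained term-by-term from the large-argument expansion of $H_1^{(1)}(k|x_1-y|)$ over this compact set, producing an expansion for $\sigma$ of the same form as \eqref{eq:asymp_f}. That $\mathcal{D}_\Gamma[\sigma]$ satisfies the Helmholtz equation and the Dirichlet condition is immediate from the jump relations and \eqref{eqn:first_BIE}, while the Sommerfeld radiation condition follows from the $(1+|x_1|)^{-1/2}e^{ik|x_1|}$ asymptotics of $\sigma$. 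Finally, $\mathcal{D}_{\tGamma}[\sigma]=\mathcal{D}_\Gamma[\sigma]$ on $\Omega\cap\{|x_1|<L\}$ is one more application of contour deformation, now with $\bx$ an interior point uniformly separated from both contours, so $g(\bx,\cdot)$ is analytic in a neighborhood of the strip between $\Gamma$ and $\tGamma$ and the deformation is elementary.

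\textbf{Main obstacle.} The principal delicacy is the contour deformation at infinity: one must simultaneously control the Hankel decay $e^{-k\Im r}$, the weighted decay $e^{-\beta|\Im\by|}(1+|\by|)^{-\alpha}$ of $\sigma$, and the monotonicity constraints on curves in $\mathcal{G}$, uniformly in the strip between two arbitrary members of $\mathcal{G}_\bx$. The borderline case $\beta=k$, $\alpha=1/2$ is exactly where these decays are in exact balance, which is why the endpoint statement requires the refined asymptotic-expansion hypothesis on $f$ rather than a mere weighted sup bound.
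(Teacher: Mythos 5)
Your overall architecture mirrors the paper's: establish compactness of the integral operator on $\cspace^{\alpha,\beta}$, prove injectivity via a half-space uniqueness theorem for Helmholtz, invoke the Fredholm alternative, and then connect different curves in $\mathcal{G}$ by contour deformation. The extension-by-the-equation step (defining $\sigma$ on $\Gamma_L\cup\Gamma_R$ via $\sigma = f - K_\Gamma[\sigma_\Gamma]$ after first solving on $\Gamma$) is a cosmetic reorganization of the paper's \cref{prop:Kdef}, which instead defines a single curve-independent operator $K$ on $\Gammac$ up front; the bootstrap for the refined asymptotics and the contour-independence of $\mathcal{D}$ are as in the paper.

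There is, however, a genuine gap in the injectivity argument. You assert that ``the weighted algebraic decay supplies the Sommerfeld condition,'' i.e.\ that $\sigma \in \cspace^{\alpha,\beta}$ with $\alpha\in(0,1/2)$ suffices to conclude that $u = \mathcal{D}_\Gamma[\sigma]$ is uniformly radiating and bounded, and hence falls within the uniqueness class of \cref{thm:pde-uniqueness}. This does not follow from the weighted sup bound alone. A density that decays like $(1+|x_1|)^{-\alpha}$ with $\alpha<1/2$, but is not of the outgoing form $e^{ik|x_1|}|x_1|^{-1/2}$ times a classical symbol, can give a double-layer potential supported on an infinite flat boundary that fails the Sommerfeld condition (or fails the uniform-in-angle boundedness). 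The paper closes this gap with a short but essential bootstrap: for $\bx\in\Gamma_L\cup\Gamma_R$, \cref{rem:vanish} reduces the homogeneous relation $\sigma(\bx)=-K[\sigma](\bx)$ to an integral of $\kfunc(\bx,\cdot)\sigma$ over the compact set $\Gamma_M$ only, and since $\kfunc(\bx,\by)$ for fixed $\by\in\Gamma_M$ has a large-$|\bx|$ asymptotic expansion of exactly the form \eqref{eq:asymp_f} (uniformly in $\by$), so does $\sigma$. Only after establishing this does \cref{thm:d_pde} apply and deliver the hypotheses of \cref{thm:pde-uniqueness}. Without this intermediate step you do not know $u$ is in the right uniqueness class. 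A secondary point: ``the standard jump relation forces $\sigma\equiv 0$'' elides a necessary extra argument. Once $u\equiv 0$ in $\Omega$, one must also show $u\equiv 0$ in $\mathbb{R}^2\setminus\Omega$ before the jump of $\cD_\Gamma$ yields $\sigma\equiv 0$; the paper does this via continuity of $\partial_n u$ across $\Gamma$ and a Neumann-data application of \cref{thm:pde-uniqueness} in the lower half-space.
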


This theorem plays a critical role in the construction of an efficient discretization as it asserts that the integral equation can be imposed on any contour $\tGamma \in \mathcal{G}$, and recover the restriction of the density along that contour. Moreover if $f$ is outgoing in the sense of~\cref{eq:asymp_f}, then the density is also outgoing and rapidly decaying along appropriately chosen contours, for example, ones for which the derivative of the imaginary part is bounded below by a positive constant. Along, all such contours, $\sigma$ can be represented to accuracy $\epsilon$ using $O(\log{(1/\varepsilon)})$ degrees of freedom.

\section{Overview of the proof}

The first stage of the proof is to use the operators  $K_{\tGamma}$ to define a new integral operator $K:\cspace^{\alpha,\beta} \to \cspace^{\alpha,\beta}$ for any $0<\beta\leq k$ and $\alpha \in (0,1/2),$ and prove its compactness.

We begin with the following Proposition, relating the values of $K_{\tGamma}[\sigma]$ for different $\tGamma \in \mathcal{G}_{\bx}.$
\begin{proposition}\label{prop:Kdef}
Suppose $\sigma \in \cspace^{\alpha,\beta}$ for some $0<\beta \leq k$ and $\alpha \le 1/2.$ Fix $\bx \in \Gammac.$ For each $\tGamma \subset \mathcal{G}_{\bx},$ let $K_{\tGamma}[\sigma]$ be defined as in Definition \ref{def:K_def}. Then, for any other curve $\hGamma$ in $\mathcal{G}_{\bx},$
$$K_{\tGamma}[\sigma](\bx)=K_{\hGamma}[\sigma](\bx).$$
Thus $K_{\tGamma}[\sigma](\bx)$ is well-defined as a function from $\mathcal{G}_{\bx}$ to $\mathbb{C},$ independent of the choice of $\tGamma.$ Moreover, the function so defined is in $\cspace^{\alpha,\beta}.$
\end{proposition}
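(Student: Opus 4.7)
The plan is to prove the proposition in two stages: first, establish contour invariance using Cauchy's theorem, and then verify the three defining properties of $\cspace^{\alpha,\beta}$ for the resulting function of $\bx$.

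For the invariance, I fix $\bx \in \Gammac$ together with two curves $\tGamma,\hGamma \in \mathcal{G}_{\bx}$. When $\bx \in \Gamma_L \cup \Gamma_R$, \cref{rem:vanish} forces the integrand to vanish on the tails of each curve, so both operators reduce to the common integral over $\Gamma_M$. When $\bx \in \Gamma_M$, the $\Gamma_M$ pieces of the two integrals automatically agree (every curve in $\mathcal{G}$ traverses $\Gamma_M$ identically), and it suffices to compare the right-tail integrals, since the left tail is symmetric. Throughout $\Gamma_R$ the integrand $\kfunc(\bx,\by)\,\sigma(\by)$ is a holomorphic function of the complex coordinate $y_1$: $\sigma$ is analytic there by hypothesis, $\bn_\mathbb{C}$ is constant, and the monotonicity condition in the definition of $\mathcal{G}$ places $r(\bx,\by)^{2}$ in the closed upper half-plane, so the square root and the Hankel function are single-valued and analytic. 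I then apply Cauchy's theorem to the closed loop formed by the two tail arcs together with a vertical closing segment at $\Re y_1 = R$, and send $R \to \infty$. Vanishing of the closing segment follows by combining the bound $|\sigma(\by)| \lesssim \|\sigma\|_{\alpha,\beta}\,e^{-\beta|\Im y_1|}(1+|y_1|)^{-\alpha}$ with the large-argument Hankel asymptotic $|H_{1}^{(1)}(kr)| \lesssim |kr|^{-1/2}\,e^{-\Im(kr)}$, using that the same monotonicity yields $\Im(kr)\gtrsim k|\Im y_1|$ and $|r| \gtrsim R$ on the segment.

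Once $K[\sigma](\bx)$ is well-defined, I verify membership in $\cspace^{\alpha,\beta}$. Continuity on $\overline{\Gammac}$ and analyticity on $\IInt\Gamma_L \cup \IInt\Gamma_R$ follow by locally fixing a representative contour (for example $\Gamma$ itself near points of $\Gamma_M$, or a fixed complex curve in $\mathcal{G}$ near tail points), differentiating under the integral sign, and applying Morera's theorem on small triangles in the interior of the tails to conclude analyticity; the weakly singular structure of $\kfunc$ combined with the decay of $\sigma$ supplies the required dominated-convergence hypotheses.

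The main obstacle is the weighted estimate
\begin{equation*}
|K[\sigma](\bx)| \lesssim \|\sigma\|_{\alpha,\beta}\, e^{-\beta|\Im \bx|}(1+|\bx|)^{-\alpha}.
\end{equation*}
For $\bx \in \Gamma_L \cup \Gamma_R$, \cref{rem:vanish} once more reduces $K[\sigma](\bx)$ to an integral over the compact set $\Gamma_M$, on which $\sigma$ is uniformly bounded, and I obtain the required decay from the Hankel asymptotic together with the facts that $|r| \gtrsim |\bx|$ and $\Im(kr)\gtrsim k|\Im x_1|$, using $\beta \le k$ to absorb the exponent and $\alpha \le 1/2$ to absorb the $|r|^{-1/2}$ power. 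For $\bx \in \Gamma_M$, I evaluate on any $\tGamma \in \mathcal{G}_{\bx}$ and split at $|\by-\bx|=1$: the near-field piece is controlled by the standard weakly singular bound $|\kfunc(\bx,\by)|\lesssim|\bx-\by|^{-1}$ paired with the local boundedness of $\sigma$, while the far-field piece is dominated by the Hankel exponential times the $\sigma$ weight on the (possibly complex) tails, yielding a uniform bound; since $|\Im\bx|=0$ and $|\bx|\le L$ on $\Gamma_M$, this uniform bound is exactly what the weighted norm requires. The delicate bookkeeping throughout is tracking that all implied constants depend only on $\Gamma,L,k,\alpha,\beta$, and $\|\sigma\|_{\alpha,\beta}$.
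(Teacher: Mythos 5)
Your overall strategy matches the paper's: the vanishing of $\kfunc$ when both arguments lie in $\Gamma_L\cup\Gamma_R$ (Remark~\ref{rem:vanish}) handles targets in the tails, Cauchy's theorem gives contour-independence for targets in $\Gamma_M$, analyticity of the complexified square root gives analyticity of $K[\sigma]$ in $\IInt\Gamma_L\cup\IInt\Gamma_R$, and kernel decay bounds give the weighted estimate. This is exactly how the paper proceeds, with the weighted estimate delegated to Lemma~\ref{lem:sqrt} and Corollary~\ref{cor:decay}; your write-up simply fills in the Cauchy-theorem closing-segment estimate that the paper calls ``immediate.''

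There is, however, a real slip in the near-field estimate for $\bx\in\Gamma_M$. You invoke ``the standard weakly singular bound $|\kfunc(\bx,\by)|\lesssim|\bx-\by|^{-1}$,'' but on a one-dimensional curve this is \emph{not} locally integrable near the diagonal, so the step as written would give a divergent near-field contribution rather than a uniform bound. The bound you cite is in fact too pessimistic: for the two-dimensional Helmholtz double layer kernel on a $C^\infty$ curve, $(\bx-\by)\cdot\bn(\by)=O(|\bx-\by|^2)$ cancels the $O(|\bx-\by|^{-1})$ pole of $H_1^{(1)}(k|\bx-\by|)/|\bx-\by|$, so $\kfunc$ is actually bounded --- indeed continuously differentiable --- on $\Gamma_M\times\Gamma_M$, as stated in part (3) of Lemma~\ref{lem:sqrt}. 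Replacing your cited bound by this boundedness makes the near-field piece trivially finite and the remainder of your argument goes through. (The $|\bx-\by|^{-1}$ estimate is the one relevant for the three-dimensional double layer on a surface, not the two-dimensional case treated here.)
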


The previous proposition justifies the following definition.
\begin{definition}\label{def:K_def}
Suppose  $\sigma \in \cspace^{\alpha,\beta}$ for some $0<\beta \leq k$ and $\alpha \le 1/2.$ Define the operator $K:C^{\alpha,\beta} \to C^{\alpha,\beta}$ by
$$K[\sigma](\bx) = K_{\tGamma}[\sigma](\bx),$$
for any $\bx \in \tGamma$ and any $\tGamma \in \mathcal{G}_{\bx}.$
\end{definition}

Given the above definition, it is now straightforward to show the following.
\begin{lemma}\label{lem:compact}
    Let $K$ be the integral operator defined above. For any $\alpha \in (0,1/2),$ $0<\beta\le k,$ the operator $K$ is compact from $\cspace^{\alpha,\beta}$ to itself.
\end{lemma}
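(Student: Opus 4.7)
The plan is to apply an Arzel\`a--Ascoli type argument on the non-compact domain $\overline{\Gammac}$ by combining equicontinuity on compact subsets with uniform decay at infinity that is \emph{strictly better} than what $\cspace^{\alpha,\beta}$ requires. Concretely, I will show that $K$ maps bounded subsets of $\cspace^{\alpha,\beta}$ into subsets of $\cspace^{1/2,\beta}$ that are bounded and equicontinuous on every compact subset of $\overline{\Gammac}$. Since $\alpha<1/2$ by hypothesis, the factor $(1+|\bx|)^{\alpha-1/2}$ tends to zero at infinity, and a standard tail argument will then upgrade uniform convergence on compacta of a suitable subsequence to convergence in the $\|\cdot\|_{\alpha,\beta}$ norm.

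The crucial step is the gain of decay. For $\bx\in\Gamma_L\cup\Gamma_R$ with $|\bx|$ large, I pick any $\tGamma\in\mathcal{G}_{\bx}$ and invoke \Cref{rem:vanish}: the kernel $\kfunc(\bx,\by)$ vanishes identically whenever $\by$ lies on the portion of $\tGamma$ contained in $\Gamma_L\cup\Gamma_R$. Consequently
\[
K[\sigma](\bx)=\int_{\Gamma_M}\kfunc(\bx,\by)\,\sigma(\by)\,ds_{\by},
\]
an integral over the compact middle piece. The large-argument asymptotics of $H_{1}^{(1)}$ combined with the elementary estimate $\Im r(\bx,\by)\gtrsim |\Im \bx|$ for $\by\in\Gamma_M$ and $|\bx|$ large yield
\[
|\kfunc(\bx,\by)|\ \lesssim\ \frac{e^{-k|\Im\bx|}}{(1+|\bx|)^{1/2}},
\]
which, together with $\beta\leq k$ and the uniform boundedness of $\sigma|_{\Gamma_M}$, produces the desired $\cspace^{1/2,\beta}$ bound on $\Gamma_L\cup\Gamma_R$. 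For $\bx\in\Gamma_M$ the weight $(1+|\bx|)^{1/2}e^{\beta|\Im\bx|}$ is uniformly bounded, so only an $L^{\infty}$ bound is needed; here I use the freedom granted by \Cref{prop:Kdef} to choose a $\tGamma\in\mathcal{G}_{\bx}$ whose imaginary part grows linearly for $|\Re\by|>L$, so that the $e^{-\beta|\Im\by|}$ decay of $\sigma$ combined with the corresponding Hankel decay renders the tail integral absolutely convergent and bounded by $\|\sigma\|_{\alpha,\beta}$, while the piece on $\Gamma_M$ is handled by the classical $L^{\infty}$-boundedness of the Helmholtz double-layer operator on a smooth compact curve.

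Equicontinuity on any compact $E\subset\overline{\Gammac}$ follows from the same decomposition: outside $\Gamma_M$ the integrand depends analytically on $\bx\in E$ and decays exponentially in $\by$, so the operator is uniformly Lipschitz there; on $\Gamma_M$ the classical H\"older regularity of the Helmholtz double layer on a smooth compact curve supplies a modulus of continuity depending only on $\sup|\sigma|$. With these ingredients a diagonal extraction along an exhausting sequence of compact subsets produces a subsequence of $\{K[\sigma_n]\}$ converging uniformly on compacta to some $f$, and convergence in $\cspace^{\alpha,\beta}$ is then obtained by choosing $R$ so large that $(1+R)^{\alpha-1/2}$ times the uniform $\cspace^{1/2,\beta}$ bound is below $\varepsilon/2$, and invoking uniform convergence on $\{|\bx|\le R\}\cap\overline{\Gammac}$. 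The main technical obstacle is keeping all estimates uniform as $\bx$ sweeps the non-compact set $\overline{\Gammac}$ while the contour $\tGamma\in\mathcal{G}_{\bx}$ is allowed to depend on $\bx$; the contour-independence granted by \Cref{prop:Kdef} is precisely what provides that flexibility.
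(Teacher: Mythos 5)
Your proposal is correct and follows essentially the same route as the paper: establish (via the kernel bounds from \Cref{lem:sqrt} and the contour independence of \Cref{prop:Kdef}) that $K$ maps bounded sets in $\cspace^{\alpha,\beta}$ into equicontinuous, uniformly bounded sets in $\cspace^{1/2,\beta}$, then exploit the strict decay gain $\alpha<1/2$ to make tails uniformly small and conclude by Arzel\`a--Ascoli on an exhaustion by compacta with a diagonal extraction. The paper's proof is shorter only because it cites its earlier lemmas for the decay and equicontinuity rather than re-deriving them inline.
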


The second stage is to show the invertibility of $I+K.$ The following theorem proves that the restriction of $(I+K)$ to any curve $\tGamma$ is injective.

\begin{theorem}\label{thm:inject}
   Suppose  $\sigma \in \cspace^{\alpha,\beta}$ for some $0<\beta \leq k$ and $\alpha \in (0,1/2)$ and let $K[\sigma]$ be the function given in \cref{def:K_def}. If $\sigma + K[\sigma]$ vanishes along any curve $\tGamma \in \mathcal{G}$ then $\sigma$ vanishes in all of $\Gammac.$ 
\end{theorem}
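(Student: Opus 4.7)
The plan is to convert the identity $\sigma + K[\sigma] \equiv 0$ on $\tGamma$ into a homogeneous Helmholtz boundary value problem for the associated double-layer potential on the original real contour $\Gamma$, then invoke classical uniqueness to conclude that $\sigma$ vanishes. The main obstacle will be verifying the Sommerfeld radiation condition for $u = \cD_\Gamma[\sigma]$, since $\sigma$ decays only algebraically on the real axis.

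First I would upgrade the vanishing of $\sigma + K[\sigma]$ from $\tGamma$ to all of $\Gammac$. Every curve in $\mathcal{G}$ agrees with $\Gamma$ on $\Gamma_M$, so the equation already holds there. By the preceding proposition, $\sigma + K[\sigma] \in \cspace^{\alpha,\beta}$, and in particular is analytic on $\IInt\Gamma_L$ and $\IInt\Gamma_R$. Since $\tGamma\cap\Gamma_L$ (resp.\ $\tGamma\cap\Gamma_R$) is a smooth curve with accumulation points in the interior, the identity theorem forces $\sigma + K[\sigma] \equiv 0$ on $\Gammac$. In particular, the equation holds pointwise on the real contour $\Gamma$.

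Next, a direct comparison of $\kfunc$ with the integrand of $\cD_\Gamma$ in~\eqref{eqn:first_BIE} shows $K_\Gamma = -2\cD_\Gamma$ along $\Gamma$, so the equation reduces to $\cD_\Gamma[\sigma] = \sigma/2$ on $\Gamma$. Set $u(\bx) := \cD_\Gamma[\sigma](\bx)$ on $\mathbb{R}^2\setminus\Gamma$; it satisfies $(\Delta+k^2)u = 0$, and with $\bn=(0,-1)$ the standard double-layer jump relations give
\[
u|_{\Omega}(\by) = \cD_\Gamma[\sigma](\by) - \tfrac{1}{2}\sigma(\by) = 0, \qquad u|_{\Omega'}(\by) = \cD_\Gamma[\sigma](\by) + \tfrac{1}{2}\sigma(\by) = \sigma(\by),
\]
on $\Gamma$, where $\Omega'$ denotes the lower perturbed half-space and $\partial_n u$ has no jump across $\Gamma$. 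To verify that $u$ is outgoing in $\Omega$, I would, for any target $\bx \in \Omega$, use the analyticity of $\sigma$ and of $\kfunc(\bx,\cdot)$ to deform the integration contour from $\Gamma$ to a curve $\tGamma'\in\mathcal{G}$ lying strictly below $\bx$; along $\tGamma'$ both $\sigma$ and the kernel decay exponentially (since $\beta\leq k$), so $u(\bx) = \cD_{\tGamma'}[\sigma](\bx)$ as an absolutely convergent integral. A stationary-phase analysis of this deformed representation recovers the outgoing asymptotic $e^{ik|\bx|}/|\bx|^{1/2}$ as $|\bx|\to\infty$, hence the Sommerfeld condition. Classical uniqueness for the sound-soft Helmholtz problem on a compactly perturbed half-space then gives $u\equiv 0$ in $\Omega$.

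From $u\equiv 0$ in $\Omega$ and continuity of $\partial_n u$ across $\Gamma$, $\partial_n u$ vanishes on $\Gamma$ from the $\Omega'$ side as well; the same contour-deformation argument establishes the radiation condition for $u$ in $\Omega'$, and uniqueness for the Helmholtz Neumann problem in the lower perturbed half-space yields $u\equiv 0$ in $\Omega'$. Therefore $\sigma = u|_{\Omega'} - u|_\Omega = 0$ on $\Gamma$. To extend this to $\Gammac$, observe that $\sigma\in\cspace^{\alpha,\beta}$ is analytic on $\IInt\Gamma_L,\IInt\Gamma_R$, continuous up to their boundaries, and vanishes on the real-axis portions of those boundaries; the Schwarz reflection principle extends $\sigma$ analytically across $\mathbb{R}$, and the identity theorem then forces $\sigma\equiv 0$ on all of $\Gammac$. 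The crucial step, and the main obstacle, is the radiation-condition verification: the algebraic decay of $\sigma$ along the real axis is too weak for standard arguments, and it is precisely the analytic extension guaranteed by membership in $\cspace^{\alpha,\beta}$ that allows the contour to be repositioned into a region of exponential decay --- the same mechanism that underlies the numerical scheme of the paper.
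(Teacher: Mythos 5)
Your overall architecture matches the paper's: reduce to vanishing of $\sigma + K[\sigma]$ on the real contour $\Gamma$, recognize the homogeneous double-layer boundary-value problem, apply Dirichlet uniqueness in $\Omega$ and then Neumann uniqueness in the lower half-space to get $\sigma\equiv 0$ on $\Gamma$, and finally propagate the vanishing out to $\Gammac$. Two of your sub-steps diverge from the paper, and the comparison is instructive. For the radiation-condition verification you propose deforming the representation of $\cD_\Gamma[\sigma](\bx)$ to a complexified contour and then doing a stationary-phase analysis. The paper takes a more modular route: since the kernel vanishes when both arguments lie in $\Gamma_L \cup \Gamma_R$ (\Cref{rem:vanish}), the identity $\sigma = -K[\sigma]$ on the flat real parts reads $\sigma(x_1) = -\int_{\Gamma_M}\kfunc(x_1,\by)\,\sigma(\by)\,ds(\by)$, an integral over the \emph{compact} set $\Gamma_M$; since the kernel has an asymptotic expansion as $x_1\to\pm\infty$ uniformly on compacta, $\sigma$ inherits the explicit outgoing expansion~\eqref{eq:asymp_f}, and then \Cref{thm:d_pde} (a previously established result) delivers the hypotheses of \Cref{thm:pde-uniqueness}. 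Your stationary-phase sketch is pointing at essentially the same content as \Cref{thm:d_pde}, but as written it hides a substantial amount of work (uniform far-field asymptotics, including the delicate grazing-angle limit $\hat\theta\to\pm(1,0)$); the paper's observation that $\sigma$'s asymptotics follow directly from the integral identity is the cleaner way to set up the appeal to that theorem. For the final extension from $\sigma\equiv 0$ on $\Gamma$ to $\sigma\equiv 0$ on $\Gammac$, your Schwarz-reflection argument works, but again the same representation formula gives it immediately: once $\sigma|_{\Gamma_M}\equiv 0$, $\sigma(\bx) = -\int_{\Gamma_M}\kfunc(\bx,\by)\sigma(\by)\,ds(\by)=0$ for all $\bx\in\Gamma_L\cup\Gamma_R$. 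One small logical point: in your opening step the identity-theorem argument requires $\tGamma$ to contain interior points of $\Gamma_L$ or $\Gamma_R$, which fails for $\tGamma=\Gamma$ itself (also a member of $\mathcal{G}$); that case should be flagged as trivially satisfying the conclusion, as the paper's \Cref{lem:op_vanish} implicitly does.
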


From this we obtain the following result on the invertibility of our equation.

\begin{theorem}\label{thm:invert}
    For any $f \in \cspace^{\alpha,\beta},$ $0<\alpha<1/2, 0 < \beta \leq k$, there exists a unique solution to the integral equation 
    $$\sigma(\bx) + K[\sigma](\bx) = f(\bx), \quad \bx \in \Gammac.$$
    Moreover, for any $\tGamma \in \mathcal{G},$ the integral equation
    \begin{align}\label{eq:on_curve2}
    \sigma(\bx) + K_{\tGamma}[\sigma](\bx) = f(\bx), \quad \bx \in \tGamma
    \end{align}
    has a unique solution in $\cspace^{\alpha,\beta}.$
\end{theorem}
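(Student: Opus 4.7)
The plan is to combine the compactness of $K$ on $\cspace^{\alpha,\beta}$ established in \Cref{lem:compact} with the injectivity statement of \Cref{thm:inject} via the Fredholm alternative, and then to deduce the on-curve equation from the global statement using the consistency encoded in \Cref{prop:Kdef} and \Cref{def:K_def}.

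First I would establish invertibility of $I+K$ on $\cspace^{\alpha,\beta}$. Since $K$ is compact on the Banach space $\cspace^{\alpha,\beta}$, $I+K$ is Fredholm of index zero, so bijectivity reduces to injectivity. To verify injectivity, take any $\sigma \in \cspace^{\alpha,\beta}$ with $\sigma + K[\sigma] \equiv 0$ on $\Gammac$. Because $\Gamma$ itself lies in $\mathcal{G}$, \Cref{def:K_def} (together with \Cref{prop:Kdef}) gives $K[\sigma](\bx) = K_{\Gamma}[\sigma](\bx)$ for all $\bx \in \Gamma$, so $\sigma + K_{\Gamma}[\sigma]$ vanishes along $\Gamma$. \Cref{thm:inject} then forces $\sigma \equiv 0$ on $\Gammac$. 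Hence $I+K$ is injective, and therefore bijective with bounded inverse, yielding existence and uniqueness of $\sigma \in \cspace^{\alpha,\beta}$ solving $\sigma + K[\sigma] = f$ on all of $\Gammac$.

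Next I would deduce the on-curve statement. Fix $\tGamma \in \mathcal{G}$. For existence, take $\sigma$ to be the unique global solution just obtained. For every $\bx \in \tGamma$ we have $\tGamma \in \mathcal{G}_{\bx}$, so \Cref{def:K_def} gives $K[\sigma](\bx) = K_{\tGamma}[\sigma](\bx)$; restricting the global identity $\sigma + K[\sigma] = f$ to $\tGamma$ therefore produces a solution of \eqref{eq:on_curve2}. For uniqueness, suppose $\sigma_1, \sigma_2 \in \cspace^{\alpha,\beta}$ both satisfy \eqref{eq:on_curve2}. Their difference $\eta := \sigma_1 - \sigma_2 \in \cspace^{\alpha,\beta}$ satisfies $\eta + K_{\tGamma}[\eta] \equiv 0$ along $\tGamma$, and \Cref{thm:inject} (applied with this particular $\tGamma$) forces $\eta \equiv 0$ throughout $\Gammac$; in particular $\sigma_1 = \sigma_2$.

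The main obstacle has in fact been absorbed into the preceding results: \Cref{lem:compact} controls compactness in the weighted analytic function space (requiring simultaneous control of the exponential weight along the complex tails and of the algebraic decay), and \Cref{thm:inject} upgrades vanishing on a single curve to vanishing on all of $\Gammac$ via analytic continuation of the outgoing density. Once those are in hand, the present theorem is essentially a bookkeeping consequence of the Fredholm alternative together with the path-independence furnished by \Cref{prop:Kdef}.
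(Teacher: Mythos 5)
Your argument is correct and follows the same route as the paper: compactness of $K$ (\Cref{lem:compact}) together with injectivity of $I+K$ (\Cref{thm:inject}) and the Riesz/Fredholm alternative yield the bounded inverse. The paper's own proof is terser (it simply cites Riesz theory and does not spell out the on-curve restriction), while you make the reduction explicit using the path-independence from \Cref{prop:Kdef} and \Cref{def:K_def}; these details are consistent with the paper's intent and do not change the underlying argument.
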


These results are proved in the following sections.

\section{Proof of the main result}

\subsection{Preliminary results on the uniqueness of certain Helmholtz problems in halfspaces}

The following result has been known since the early 1960s.
\begin{theorem}
\label{thm:pde-uniqueness}
    Let $u \in C^2(\Omega) \cap C(\overline{\Omega})$ be a solution to the homogeneous Helmholtz equation in $\Omega,$ for which a normal derivative on $\partial \Omega$ exists. Suppose, in addition, that either $u$ or $\frac{\partial u}{\partial n}$ vanishes on $\partial \Omega,$ and that
    \begin{enumerate}
    \item $$\lim_{r\to \infty} \sqrt{r}\left[\frac{\partial}{\partial r} u(\hat{\theta} r) - ik u(\hat{\theta}r)\right] =0, $$
    \item $$\limsup_{r\to \infty} |u(\hat{\theta}r)| < \infty$$
    \end{enumerate}
    uniformly in $\hat{\theta} = (\theta_1,\theta_2) \in S^1$ with $\theta_2 >0,$ then $u \equiv 0.$
\end{theorem}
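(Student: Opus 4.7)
The strategy is the classical reflection-plus-Rellich argument enabled by the fact that $\partial\Omega$ coincides with the flat line $\{x_2=0\}$ outside the compact interval $[-L+\delta,L-\delta]$. Let $K\subset\mathbb{R}^2$ be a compact set containing the perturbation. I would begin by extending $u$ to $\mathbb{R}^2\setminus K$ by Schwarz reflection across the $x_1$-axis: in the Dirichlet case (where $u=0$ on the flat portion) by the odd reflection $\tilde u(x_1,x_2):=-u(x_1,-x_2)$ for $x_2<0$, and in the Neumann case by the even reflection $\tilde u(x_1,x_2):=u(x_1,-x_2)$. In both cases, matching of the appropriate trace and its normal derivative on the flat part of $\Gamma$, together with the Helmholtz equation (which determines the second normal derivative from the tangential derivatives along the line), shows that $\tilde u\in C^2(\mathbb{R}^2\setminus K)$ and satisfies $(\Delta+k^2)\tilde u=0$ there.

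Next, I would verify that $\tilde u$ inherits the Sommerfeld radiation condition and boundedness globally. Since $|\bx^*|=|\bx|$ for $\bx^*=(x_1,-x_2)$, the chain rule gives $\pa_r \tilde u(\bx)=\mp\pa_r u(\bx^*)$ for $x_2<0$, so
\begin{align*}
\sqrt{r}\bigl(\pa_r\tilde u(\bx)-ik\tilde u(\bx)\bigr)=\mp\sqrt{r}\bigl(\pa_r u(\bx^*)-iku(\bx^*)\bigr)\longrightarrow 0
\end{align*}
uniformly as $r\to\infty$ with $\theta_2<0$, by the assumed radiation condition in the upper half-plane. The uniform boundedness of $\tilde u$ on the lower hemisphere transfers in the same way from hypothesis (2). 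Combined with the hypotheses on $u$ itself, $\tilde u$ is a bounded, radiating $C^2$ Helmholtz solution in the exterior of a bounded set.

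At this point the classical Rellich lemma in $\mathbb{R}^2$ applies and forces $\tilde u\equiv 0$ on the complement of a sufficiently large disk containing $K$. Since $\tilde u$ restricted to $\Omega$ agrees with $u$, and since the Helmholtz operator has real-analytic coefficients, the weak (or Aronszajn) unique continuation principle propagates the vanishing back into the connected domain $\Omega$, so that $u\equiv 0$ there.

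The principal delicate point is the verification that the reflected function is genuinely $C^2$ across the reflection axis. The matching of $u$, its tangential derivatives, and its normal derivative follow from the imposed boundary condition (using the existence of the normal derivative postulated in the hypotheses and the $C(\overline{\Omega})$ regularity); the matching of the second normal derivative, however, uses the PDE itself: in the Dirichlet case, $u=\pa_{x_1}^2 u=0$ on the axis forces $\pa_{x_2}^2 u=-k^2 u-\pa_{x_1}^2 u=0$ there, which is exactly the compatibility condition needed for the odd extension to be $C^2$ (mutatis mutandis in the Neumann case). The other mild subtlety is that the hypotheses only give radiation and boundedness in the open upper hemisphere, so one must check that the corresponding limits transfer to reflected directions; this is automatic from the symmetry $|\bx|=|\bx^*|$ exploited above.
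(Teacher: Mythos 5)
Your overall approach (Schwarz reflection across the flat part of $\partial\Omega$, then Rellich's lemma, then unique continuation) is essentially the same as the argument underlying the cited result of Odeh and the authors' own sketch. However, there is a genuine gap at the central step.

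\textbf{The gap: Rellich's lemma does not apply directly to $\tilde u$.} After reflecting you have a function $\tilde u$ that solves the Helmholtz equation in $\mathbb{R}^2 \setminus K$, satisfies the Sommerfeld radiation condition, and is bounded. You then assert that ``the classical Rellich lemma in $\mathbb{R}^2$ applies and forces $\tilde u \equiv 0$'' on the exterior of a large disk. This is false as stated: a radiating, bounded Helmholtz solution in the exterior of a compact set need not vanish. The outgoing fundamental solution $H_0^{(1)}(k|\bx|)$, restricted to $\mathbb{R}^2\setminus K$ with $0\in K$, is a bounded radiating counterexample. What Rellich's lemma actually says is that $u\equiv 0$ in an exterior domain \emph{provided}
\begin{equation*}
\lim_{R\to\infty}\int_{\partial B_R}|u|^2\,{\rm d}S = 0,
\end{equation*}
and the radiation condition alone does not yield this decay: applying Green's identity between $\partial K$ and $\partial B_R$ only shows that $\Im\int_{\partial B_R}\bar{\tilde u}\,\partial_r\tilde u\,{\rm d}S$ equals the fixed constant $\Im\int_{\partial K}\bar{\tilde u}\,\partial_n\tilde u\,{\rm d}S$, which there is no reason to expect to be nonpositive since no boundary condition holds on $\partial K$.

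\textbf{What is missing, and why the order of operations matters.} The needed $L^2$ decay must be obtained \emph{before} reflecting, by applying Green's second identity to $u$ and $\bar u$ over $\Omega\cap B_R$ and using the Dirichlet (or Neumann) condition on \emph{all} of $\partial\Omega$ --- including the perturbed arc --- to annihilate the $\partial\Omega$ contribution. This gives $\Im\int_{\Omega\cap\partial B_R}\bar u\,\partial_r u\,{\rm d}S = 0$, and then expanding $\int_{\Omega\cap\partial B_R}|\partial_r u - iku|^2\,{\rm d}S\to 0$ yields
\begin{equation*}
\lim_{R\to\infty}\int_{\Omega\cap\partial B_R}\left(|\partial_r u|^2 + k^2|u|^2\right){\rm d}S = 0.
\end{equation*}
Only then does reflection produce a function $\tilde u$ with $\int_{\partial B_R}|\tilde u|^2\to 0$ (by symmetry), to which Rellich's lemma genuinely applies. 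In your version you reflect first, which discards the information carried by the boundary condition on the curved part of $\partial\Omega$ inside $K$ --- only the condition on the flat part is consumed in making $\tilde u$ smooth across the axis --- and that lost information is exactly what kills the boundary term in the energy identity. Inserting the Green's identity step before the reflection closes the gap; the remaining steps (smoothness of the reflected function, transfer of the radiation condition to the lower hemisphere, unique continuation) are handled correctly.
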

The proof can be found in~\cite{odeh1963uniqueness}, for example.
\begin{remark}
    The above theorem applies equally well to the lower half-space $\mathbb{R}^2 \setminus \Omega.$
\end{remark}

The following theorem, proved in \cite{epstein2023solving}, gives conditions on the density for which the corresponding double layer potential is outgoing.
\begin{theorem}[\cite{epstein2023solving}]
\label{thm:d_pde}
    Suppose that $\sigma \in C^1(\Gamma;\mathbb{C})$ is a density possessing an asymptotic expansion of the form
    $$\sigma({\bx}) = \frac{e^{i|\bx|k}}{|\bx|^{\frac{1}{2}}}\sum_{\ell=0}^N \frac{a_\ell^\pm}{|\bx|^\ell} + \mathcal{O}\left(|\bx|^{-N-3/2}\right), \quad {\rm as}\, \pm x_1 \to \infty.$$
    Suppose further that we define the function $u:\Omega \to \mathbb{C}$ by
$$u(\bx) = \cD_\Gamma[\sigma](\bx), \quad {\rm for}\, \bx \in \Omega.$$
Then,
\begin{enumerate}
    \item $u$ satisfies the homogeneous Helmholtz equation in $\Omega$
    \item $$\lim_{\by \to \bx \in \Gamma} u(\by) = -\frac{1}{2} \sigma(\bx) + D_\Gamma[\sigma](\bx)$$
    \item $\frac{\partial u}{\partial n}$ exists everywhere on $\Gamma$.
\end{enumerate}
    Moreover, for some $M$ depending on $N,$ $u$ has the following asymptotic expansion
    $$ u(\hat{\theta}r) = \frac{e^{ikr}}{\sqrt{r}} \sum_{j=0}^M \frac{\alpha_j(\hat{\theta})}{r^j} + \mathcal{O}(r^{-M-1}),$$
    for some coefficients $\alpha_j$ which are smooth away from $\hat{\theta} = \pm (1,0).$ Moreover, these coefficients have smooth extensions to $\hat{\theta}\to \pm(1,0)$ with error estimates which are uniformly correct.
\end{theorem}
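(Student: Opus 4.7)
The plan is to split the density as $\sigma = \sigma_c + \sigma_\infty$ with a smooth cutoff $\chi$ equal to one on $[-L,L]$ and vanishing outside a slightly larger interval, so that $\sigma_c := \chi\sigma$ is compactly supported on $\Gamma$ while $\sigma_\infty := (1-\chi)\sigma$ is supported on the two flat tails where $\bn = (0,-1)$. The compact piece $\cD_\Gamma[\sigma_c]$ is a classical double layer with a $C^1$, compactly supported density: it is smooth in $\Omega$, solves the homogeneous Helmholtz equation there, satisfies the usual jump relations on $\Gamma$, admits a normal derivative, and has the standard outgoing far-field expansion (obtained by substituting the asymptotic expansion of $H_1^{(1)}$ directly). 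All four assertions therefore reduce to their analogues for $\cD_\Gamma[\sigma_\infty]$.

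On the flat tails the double layer collapses to the explicit one-dimensional integral
\[
\cD_\Gamma[\sigma_\infty](\bx) \;=\; \frac{ik\,x_2}{4}\int_{\mathbb{R}} \frac{H_1^{(1)}(k\,r_s)}{r_s}\,(1-\chi(s))\,\sigma(s,0)\,\mathrm{d}s, \qquad r_s = \sqrt{(x_1-s)^2+x_2^2}.
\]
Using the prescribed asymptotic form of $\sigma$, I would establish items (1)--(3) by differentiating under the integral and controlling the resulting oscillatory integrals through integration by parts in $s$. For $\bx$ in the interior of $\Omega$ the phase $|s|+r_s$ has gradient bounded away from zero, which gives smoothness and the Helmholtz equation. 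As $\bx$ approaches the flat part of $\Gamma$, the local singularity at $s=x_1$ is exactly the standard double-layer singularity and produces the usual jump $-\tfrac12\sigma + D_\Gamma[\sigma]$, while the remainder of the integral is a continuous, bounded oscillatory tail; the same estimates yield existence of $\partial_n u$.

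The main work is the far-field expansion (4). Writing $\bx = r\hat\theta$ and inserting the asymptotic series for $H_1^{(1)}(kr_s)$ reduces the tail to integrals of the form
\[
\int_L^\infty e^{ik(r_s+s)}\, s^{-\ell-1/2}\, r_s^{-3/2}\,\mathrm{d}s
\]
together with the symmetric contribution for $s<0$. The combined phase $\phi(s) = r_s + s$ has $\phi'(s) = 1 + (s-x_1)/r_s$, which tends to zero only in the limit $\hat\theta \to (1,0)$; symmetrically on the left tail $\phi'$ vanishes only as $\hat\theta \to (-1,0)$. Away from these two grazing directions, repeated integration by parts in $s$ produces a convergent asymptotic series in $1/r$ whose coefficients are smooth in $\hat\theta$. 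Combining with the compact contribution, which is smooth in every direction, yields the claimed expansion with $\alpha_j$ smooth away from $\pm(1,0)$.

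The hard part is the uniform validity of the expansion, i.e.\ the existence of smooth extensions of the $\alpha_j$ up to $\hat\theta = \pm(1,0)$. Straightforward stationary phase produces apparent singularities at these grazing angles because $\phi'$ degenerates in the limiting direction. My plan is to exploit the very structure motivating the rest of the paper: the integrand admits an analytic continuation in $s$, so the $s$-contour may be deformed into the upper half-plane where the phase acquires a strictly positive imaginary part. This rewrites each tail integral in closed form up to a remainder expressible through Fresnel-type functions of $\sqrt{kr}\,(1\mp\hat\theta_1)$, whose uniform large-argument asymptotics are classical and deliver smooth extensions of the $\alpha_j$ all the way to $\hat\theta_1 = \pm 1$. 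The same contour deformation also disposes of the grazing-angle cases in (1)--(3).
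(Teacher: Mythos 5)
The paper does not prove \Cref{thm:d_pde}; it is quoted from \cite{epstein2023solving} (and the authors further remark that even the three-dimensional analogue is ``rather technical'' and deferred to a separate paper), so there is no in-document proof against which to compare your attempt. I can only assess the proposal on its own merits.

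Your decomposition $\sigma=\chi\sigma+(1-\chi)\sigma$, the observation that the compactly supported piece handles the classical jump relations and contributes a far field smooth in $\hat\theta$, and the explicit one-variable form of $\cD_\Gamma[(1-\chi)\sigma]$ on the flat tails (modulo a sign: with the paper's normal $\bn=(0,-1)$ one has $(\bx-\by)\cdot\bn=-x_2$, so the prefactor is $-ikx_2/4$) are all sound and likely part of any correct argument. Likewise, items (1)--(3) follow from local regularity plus absolute convergence of the tail integral and do not involve far-field directions at all; the closing remark that the contour deformation ``also disposes of the grazing-angle cases in (1)--(3)'' is a non sequitur.

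The genuine gap is the only hard part of the statement: the uniform far-field expansion. You correctly diagnose that the phase $\phi(s)=s+r_s$ becomes slowly varying as $\hat\theta\to(1,0)$, so naive integration by parts produces coefficients that blow up like negative powers of $1-\hat\theta_1$. But the sentence asserting that contour deformation plus ``classical'' uniform Fresnel asymptotics ``delivers smooth extensions of the $\alpha_j$ all the way to $\hat\theta_1=\pm1$'' is essentially a restatement of the conclusion rather than a derivation. Concretely, after the deformation one obtains an error-function transition term with argument comparable to $\sqrt{kr}\,(1-\hat\theta_1)$, and this is \emph{not} yet of the form $\tfrac{e^{ikr}}{\sqrt{r}}\sum_j\alpha_j(\hat\theta)\,r^{-j}$ with bounded smooth $\alpha_j$. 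One must still show (i) an exact cancellation: for example, at leading order the double-layer kernel carries a factor $\sin\theta$ that must be seen to cancel the $\bigl(1-\cos\theta\bigr)^{-1/2}$ coming from the tail Fresnel integral, yielding a coefficient proportional to $\sqrt{1+\cos\theta}$, which is smooth; analogous cancellations are needed at every order; and (ii) a genuine matching argument across the boundary-layer regime $1-\hat\theta_1=\mathcal{O}(1/(kr))$, where the error-function argument stays $\mathcal{O}(1)$ and its large-argument expansion is unavailable, so the claimed remainder bound $\mathcal{O}(r^{-M-1})$ must be established directly there. Your sketch names the right tool but stops precisely where the technical content of the theorem begins.
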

\begin{remark}
    Intuitively, the previous theorem shows that solutions $u$ generated by an `outgoing' density are also outgoing. An identical result holds for the lower half-space $\mathbb{R}^2\setminus \Omega.$
\end{remark}

\subsection{Proof of \cref{prop:Kdef} and \cref{lem:compact}}
We now turn to the proofs that $K$ in \cref{def:K_def} is well-defined and compact. We first establish bounds on $K_{\tGamma}[\sigma]$ when $\sigma \in \cspace^{\alpha,\beta},$ $\alpha \in (0,1/2),$ $0<\beta\leq k,$ and $\tGamma \in \mathcal{G}.$

Before doing this, we first review several standard inequalities which will be useful in formulating bounds on $K_{\Gamma_{\mathbb{C}}}.$ The first lemma is elementary (the result follows from Equation 10.17.13 of \cite{nisthandbook},
for example).

\begin{lemma}
   There is a constant, $C,$ so that for any $z \in \mathbb{C},$
        $$|H^{(1)}_1(z)| \le C e^{-\Im{z}}\left( \frac{1}{\sqrt{|z|}}+\frac{1}{|z|}\right).$$
\end{lemma}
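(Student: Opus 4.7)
The strategy is to split the complex plane into a neighborhood of the origin and its complement, using the near-origin behavior of $H_1^{(1)}$ in the former and the standard large-argument asymptotic expansion in the latter. Fix a threshold $R > 0$ (say $R = 1$).

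For $|z| \geq R$, I would invoke the asymptotic expansion for $H_1^{(1)}$ with the explicit remainder bound in NIST 10.17.13, which in its leading form reads
\begin{equation*}
H_1^{(1)}(z) = \sqrt{\tfrac{2}{\pi z}}\, e^{i(z - 3\pi/4)} \bigl( 1 + R_1(z) \bigr),
\end{equation*}
where $|R_1(z)| \leq C_1/|z|$ uniformly in the relevant sector of the complex plane (the branch of $\sqrt{z}$ and $H_1^{(1)}$ being the one fixed in the paper, namely the plane slit along $(-\infty, 0)$). Taking absolute values and using $|e^{iz}| = e^{-\Im z}$ immediately yields
\begin{equation*}
|H_1^{(1)}(z)| \leq C_2\, \frac{e^{-\Im z}}{\sqrt{|z|}}, \qquad |z| \geq R,
\end{equation*}
which is dominated by the first term of the claimed bound.

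For $|z| \leq R$, I would start from the series representation $H_1^{(1)}(z) = J_1(z) + i Y_1(z)$ together with the explicit small-argument behavior $Y_1(z) = -\tfrac{2}{\pi z} + O(z \log z)$ and $J_1(z) = O(z)$ as $z \to 0$. This gives $|z|\, |H_1^{(1)}(z)|$ bounded by a constant on the compact set $\{0 < |z| \leq R\}$ (continuity plus the controlled singularity at the origin). Since $e^{-\Im z}$ is also bounded above and below by positive constants on this compact set, we conclude
\begin{equation*}
|H_1^{(1)}(z)| \leq C_3\, \frac{1}{|z|} \leq C_3 \cdot e^{R}\, \frac{e^{-\Im z}}{|z|}, \qquad 0 < |z| \leq R,
\end{equation*}
which is dominated by the second term of the claimed bound. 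Taking $C = \max(C_2, C_3 e^R)$ (possibly doubled to absorb the sum) finishes the proof.

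The only real subtlety is ensuring that the NIST error bound applies uniformly across the full range of arguments of $z$ that appear in the paper's setup; the asymptotic expansion is typically stated in sectors of the form $|\arg z| \leq \pi - \delta$, so one has to either appeal to a version valid on the slit plane used here or patch two sectorial bounds together using the connection formulas. This is the only step that requires care; the rest is a direct reading of standard special-function estimates.
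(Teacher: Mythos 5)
The proposal is correct and takes essentially the same route as the paper, which simply cites NIST DLMF 10.17.13 and treats the remainder as elementary; you spell out the small-$z$ analysis via $Y_1(z) \sim -2/(\pi z)$ and the splitting at a fixed radius $R$ that the paper leaves implicit. The sector subtlety you flag is genuine but causes no trouble: in the paper's application the argument $z = k\,r(\bx,\by)$ always lies in the closed first quadrant (as noted in the remark following the definition of $\kfunc$), well inside the range where the 10.17.13 error bound applies, and for the statement as written one can cover the wedge near $\arg z = -\pi$ with the connection formula for $H_1^{(1)}(ze^{-\pi i})$ at the cost of a larger constant.
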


Our next lemmas are similarly elementary, but summarize several identities pertaining to complex square roots which will be used in subsequent proofs.
\begin{lemma}\label{lem:rt_bnd}
    Suppose $z$ is a complex number with  $\arg(z) \in [0,\pi/2]\cup [\pi,3\pi/2],$ and $|\Re z| > L$ for some positive constant $L.$ Moreover suppose $x,y$ are two real numbers with $x \in [-L,L]$ and $y \in [-H,H],$ where $H$ is an arbitrary positive constant.
    \begin{enumerate}
    \item If $H^2 < |z-L|^2$ then
    $$\Im \sqrt{(z-x)^2+y^2} = |\Im {z}| + R(x,y,z),$$
    where $|R(x,y,z)| \le \frac{H^2}{2|z-L|^2}.$
    \item If $H^2 < \frac{1}{2}|z-L|^2$ then 
    $$|\sqrt{(z-x)^2+y^2}|\ge {|z-L|}/{\sqrt{2}}$$
    \item If $|x|<L-\delta$ for some $\delta>0,$ then 
    $$|\sqrt{(z-x)^2+y^2}| > \sqrt{2(\sqrt{2}-1)}\, \delta.$$
    \end{enumerate}
\end{lemma}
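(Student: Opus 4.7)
My plan is to reduce all three claims to direct estimates of the complex number $(z-x)^2+y^2$, with careful attention paid to the branch of the square root. As a preliminary, I would verify that under the hypotheses $z-x$ lies in the closed first quadrant when $\Re z>L$ and in the closed third quadrant when $\Re z<-L$; in either case $(z-x)^2$ has argument in $[0,\pi]$, adding the non-negative real $y^2$ preserves this, and the principal square root therefore lies in the closed first quadrant. In particular one checks $\sqrt{(z-x)^2}=\sgn(\Re z)(z-x)$, so that $\Im\sqrt{(z-x)^2}=|\Im z|$. I would also record the elementary geometric fact $|z-x|\ge|z-L|$ for $x\in[-L,L]$ with $\Re z>L$ (and symmetrically with $|z+L|$ when $\Re z<-L$), which just says the closest point of $[-L,L]$ to $z$ is the nearer endpoint.

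Parts (2) and (3) are then direct. For (2), the reverse triangle inequality gives
\[
|\sqrt{(z-x)^2+y^2}|^2=|(z-x)^2+y^2|\ge|z-x|^2-y^2\ge|z-L|^2-H^2>\tfrac{1}{2}|z-L|^2,
\]
and taking square roots yields the stated bound. For (3), I would factor
\[
(z-x)^2+y^2=\bigl((z-x)+iy\bigr)\bigl((z-x)-iy\bigr),
\]
and observe that each factor has real part $\Re(z-x)$; under the assumption $|x|<L-\delta$ combined with $|\Re z|>L$ one has $|\Re(z-x)|>\delta$, so each factor has modulus strictly greater than $\delta$ and the product strictly greater than $\delta^2$. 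The modulus of the square root then exceeds $\delta$, which is stronger than the claimed constant.

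For part (1), I would start from the conjugate-difference identity
\[
\sqrt{(z-x)^2+y^2}-\sqrt{(z-x)^2}=\frac{y^2}{\sqrt{(z-x)^2+y^2}+\sqrt{(z-x)^2}}.
\]
Because both square roots lie in the closed first quadrant, their sum also does, so its modulus is bounded below by the modulus of either summand, in particular by $|\sqrt{(z-x)^2}|=|z-x|\ge|z-L|$. Combined with $\Im\sqrt{(z-x)^2}=|\Im z|$, taking imaginary parts produces the decomposition $\Im\sqrt{(z-x)^2+y^2}=|\Im z|+R$ with $|R|$ controlled by $y^2$ divided by a power of $|z-L|$. A sharper version comes from computing $\Im[y^2/(u+v)]=-y^2\Im(u+v)/|u+v|^2$ and using that $\Im(u+v)$ is itself of order $|\Im z|$, which is what refines the crude bound on $R$ into its stated form.

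The main obstacle throughout is the branch bookkeeping in part~(1): one must consistently use the principal branch, verify that both $\sqrt{(z-x)^2}$ and $\sqrt{(z-x)^2+y^2}$ genuinely lie in the closed first quadrant in both the $\Re z>L$ and $\Re z<-L$ cases, and pin down the identity $\sqrt{(z-x)^2}=\sgn(\Re z)(z-x)$ that converts the complex identity into the real-valued statement $\Im\sqrt{(z-x)^2}=|\Im z|$. Once that is secured, the three estimates follow from triangle-inequality style manipulations combined with the elementary lower bound $|z-x|\ge|z-L|$.
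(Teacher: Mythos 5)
Your branch-tracking preliminaries are correct and are the right foundation: with $\arg z\in[0,\pi/2]$, $\Re z>L$, $x\in[-L,L]$, $y\in\mathbb{R}$, the number $(z-x)^2+y^2$ lies in the closed upper half plane, its principal square root lies in the closed first quadrant, $\sqrt{(z-x)^2}=\sgn(\Re z)\,(z-x)$, and $|z-x|\ge|z-L|$ (with $|z+L|$ in the symmetric case $\Re z<-L$; the lemma as printed silently assumes $\Re z>L$). Your proofs of parts~(2) and~(3) are correct as written; in part~(3) the factorization into $(z-x)\pm iy$ even gives the stronger lower bound $\delta$, so the constant $\sqrt{2(\sqrt{2}-1)}$ in the statement is not sharp. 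The paper itself gives no proof of this lemma, so there is nothing internal to compare against.

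Part~(1) is where there is a genuine gap. The conjugate-difference identity and the lower bound $|u+v|\ge|v|=|z-x|\ge|z-L|$ (valid because the sum of two numbers in the closed first quadrant has modulus at least that of each summand) give immediately
$$|R|=\left|\Im\frac{y^2}{u+v}\right|\le\frac{y^2}{|u+v|}\le\frac{y^2}{|z-L|}\le\frac{H^2}{|z-L|},$$
i.e.\ a bound with a \emph{single} power of $|z-L|$ in the denominator. Your proposed refinement, using $\Im(u+v)$ is of order $|\Im z|$, yields $|R|\lesssim y^2|\Im z|/|z-L|^2$, which is never stronger than $y^2/|z-L|$ (since $|\Im z|\le|z-L|$), and does not yield the stated $H^2/(2|z-L|^2)$. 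In fact the stated bound appears to be a typo: take $L=1/2$, $z=1+10i$, $x=0$, $y=H=1$. Then $H^2<|z-L|^2\approx100.25$, so the hypothesis holds, but
$$\Im\sqrt{(z-x)^2+y^2}=\Im\sqrt{-98+20i}\approx 9.950,\qquad |\Im z|=10,$$
so $|R|\approx0.050$, while $\frac{H^2}{2|z-L|^2}\approx0.005$. Thus the printed exponent cannot be right; the correct denominator is $2|z-L|$, which your conjugate-difference argument does establish (and which my example shows is essentially sharp). You should therefore not assert that your refinement recovers the squared bound; instead, prove $|R|\le H^2/(2|z-L|)$ and note that the downstream use in the kernel estimate still works after adjusting the cutoff radius $\rho$ from $2H$ to, say, $\max(2H,H^2)$.
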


The previous lemmas immediately imply the following bound on the kernel $\kfunc.$
\begin{lemma}\label{lem:sqrt}
     There exists a positive constants $C_1$ and $C_2$, depending on $L,$ $\Gamma,$ and $k$ such that
    \begin{enumerate}
        \item $$|\kfunc(\bx,\by)|\le C_1 \frac{e^{-k |\Im x_1|}}{\sqrt{1+|\bx|}},$$
        for all $\bx = (x_1,0) \in \overline{\Gamma_L} \cup \overline{\Gamma_R}$ and all $\by \in \Gamma_M$
         \item $$|\kfunc(\bx,\by)|\le C_2 \frac{e^{-k |\Im y_1|}}{(1+|\by|)^{\frac 32}},$$
        for all $\by = (y_1,0) \in \overline{\Gamma_L} \cup \overline{\Gamma_R}$ and all $\bx \in \Gamma_M$.
        \item For $\bx, \by \in \Gamma_M,$ $g$ is continuously differentiable.
    \end{enumerate}
    Moreover, $\kfunc$ vanishes if both $\bx,\by \in \Gamma_L \cup \Gamma_R.$
\end{lemma}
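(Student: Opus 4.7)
The plan is to read $\kfunc$ as a product of three factors that can be controlled separately: the Hankel factor $H_1^{(1)}(kr)$, the reciprocal distance $1/r$, and the numerator $(\bx-\by)\cdot\bn_\mathbb{C}(\by)$. All three are governed by the complex distance $r(\bx,\by)$ and the geometry of $\by$, which is exactly what the preceding lemmas on $|H_1^{(1)}|$ and $r$ are designed to quantify. The vanishing statement at the end is immediate: whenever both $\bx,\by\in\Gamma_L\cup\Gamma_R$ their second coordinates vanish, while $\bn_\mathbb{C}(\by)=(0,-1)$, so $(\bx-\by)\cdot\bn_\mathbb{C}(\by)=0$. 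This is already recorded in \Cref{rem:vanish}.

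For item (1), I would fix $\bx=(x_1,0)\in \overline{\Gamma_R}$ (the $\Gamma_L$ case is symmetric) and $\by=(y_1,y_2)\in\Gamma_M$, noting that $|y_1|\le L$ and $|y_2|\le H$ for a fixed $H$ determined by the compact perturbation. Choose a radius $L_\ast>L$ large enough that the hypothesis $H^2<\tfrac12|x_1-L|^2$ of \Cref{lem:rt_bnd} is satisfied for $|x_1|>L_\ast$; for $|x_1|\le L_\ast$, the kernel is bounded by a constant via compactness and the claimed estimate holds trivially. For $|x_1|>L_\ast$, \Cref{lem:rt_bnd} gives $|r|\ge |x_1-L|/\sqrt{2}\gtrsim 1+|\bx|$ and $\Im r=|\Im x_1|+R$ with $|R|\le H^2/(2|x_1-L|^2)$, so the Hankel bound produces
\[
\frac{|H_1^{(1)}(kr)|}{|r|}\;\lesssim\;e^{-k\Im r}\!\left(\frac{1}{|r|^{3/2}}+\frac{1}{|r|^2}\right)\;\lesssim\;\frac{e^{-k|\Im x_1|}}{(1+|\bx|)^{3/2}},
\]
where the $e^{-kR}$ piece is absorbed into the constant. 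Since $|\by|$ is bounded, $|(\bx-\by)\cdot\bn_\mathbb{C}(\by)|\le |\bx-\by|\lesssim 1+|\bx|$, and multiplying by this factor gives the $(1+|\bx|)^{-1/2}$ decay claimed in (1).

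Item (2) runs in parallel with the roles of $\bx$ and $\by$ swapped in \Cref{lem:rt_bnd}. The key improvement over (1) comes from the numerator: for $\by\in\Gamma_L\cup\Gamma_R$ one has $\bn_\mathbb{C}(\by)=(0,-1)$ and $y_2=0$, so $(\bx-\by)\cdot\bn_\mathbb{C}(\by)=-x_2$, which is bounded uniformly by $H$. Combining this with $|r|\gtrsim 1+|\by|$ and $\Im r\ge |\Im y_1|-O(|y_1-L|^{-2})$ from \Cref{lem:rt_bnd} (applied with $z=y_1$, $x=x_1$, $y=x_2$) yields the stronger $(1+|\by|)^{-3/2}$ decay of item (2). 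Item (3) is the classical statement that the Helmholtz double layer kernel is continuously differentiable on a smooth compact piece of the boundary: the $O(|\bx-\by|^{-2})$ singularity of $H_1^{(1)}(k|\bx-\by|)/|\bx-\by|$ is cancelled by the $O(|\bx-\by|^2)$ behaviour of $(\bx-\by)\cdot\bn(\by)$ for smooth $\Gamma$, and no complexification enters since both arguments lie on the real arc.

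The only real bookkeeping obstacle is keeping the exponential decay sharp in the $e^{-k|\Im x_1|}$ (resp. $e^{-k|\Im y_1|}$) form rather than $e^{-k\Im r}$, and dealing with the transitional regime $L<|\Re x_1|\le L_\ast$ where the geometric estimates of \Cref{lem:rt_bnd} are not directly applicable. The first is handled by noting that the remainder $R(x,y,z)$ in \Cref{lem:rt_bnd}(1) is uniformly bounded on the relevant range of parameters, so $e^{-kR}$ contributes only a multiplicative constant that can be rolled into $C_1$ or $C_2$. The second is handled by a compactness argument: on the bounded set where $|\Re x_1|\le L_\ast$ and $\by\in\Gamma_M$ (or vice versa), the kernel is continuous and uniformly bounded, so the estimates hold with possibly larger constants. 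Putting the two regimes together gives constants $C_1,C_2$ depending only on $L$, $\Gamma$, and $k$.
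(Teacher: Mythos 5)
Your overall strategy---factoring $\kfunc$ into the Hankel factor, the $1/r$ factor, and the numerator $(\bx-\by)\cdot\bn_\mathbb{C}(\by)$, controlling each via the Hankel bound and \Cref{lem:rt_bnd}, and splitting into a bounded ``near'' region and a ``far'' region where the geometric estimates apply---is the same decomposition the paper uses, and your far-field computations for items (1) and (2), as well as the vanishing remark and item (3), match the paper's argument.

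There is one genuine gap in the near-region step. You assert that ``for $|x_1|\le L_\ast$, the kernel is bounded by a constant via compactness,'' but boundedness on a compact set only follows from \emph{continuity} on (the closure of) that set, and it is not a priori obvious that $\kfunc$ extends continuously to the corner $(\pm L,0)$, where $\bx\in\overline{\Gamma_R}$ and $\by\in\Gamma_M$ can coincide and $r\to 0$. The missing observation, which the paper states at the outset of its proof, is that for $\by\in\Gamma_M$ with $|y_1|>L-\delta$ the boundary is already flat, so $y_2=0$, $\bn(\by)=(0,-1)$, and $(\bx-\by)\cdot\bn(\by)=0$ whenever $\bx\in\overline{\Gamma_L}\cup\overline{\Gamma_R}$; hence $\kfunc\equiv 0$ on that part of $\Gamma_M$. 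Consequently $\kfunc$ is supported on $\by$ with $|y_1|\le L-\delta$, where $\Re(x_1-y_1)\ge\delta>0$ and \Cref{lem:rt_bnd}(3) bounds $|r|$ away from zero, so $\kfunc$ is indeed continuous (it vanishes identically near the corner) and the compactness argument becomes valid. The analogous observation ($x_2=0$, numerator $=-x_2=0$ for $\bx$ on the flat part of $\Gamma_M$) is needed in item (2). Without this step, the ``bounded by compactness'' claim is unjustified at the corner, even though the conclusion is in fact true. Finally, for item (3) your cancellation argument only shows $\kfunc=O(1)$ on the diagonal; the $C^1$ regularity needs the fuller classical analysis (which the paper simply cites), so that claim should either be expanded or sourced rather than derived from the leading-order cancellation alone.
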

\begin{proof}
For notational clarity, here we temporarily write $\Gamma_L$ and $\Gamma_R$ in place of $\overline{\Gamma_L}$ and $\overline{\Gamma_R}.$ 

For $\bx,\by \in \Gamma_L \cup \Gamma_R,$ the kernel is zero (see \cref{rem:vanish}). For $\bx,\by \in \Gamma_M$ the result is classical, see~\cite{kress2014}, for example.

Suppose now that $\bx \in \Gamma_R$ and $\by \in \Gamma_M.$ We begin by observing that for $|\by|>L-\delta,$ $\kfunc(\bx,\by) \equiv 0$ for all $\bx \in \Gamma_R.$ Thus, it remains to consider points $\bx=(x_1,0),$ and $\by=(y_1,y_2)$ such that $\Re (x_1-y_1)>\delta.$ Using \cref{lem:rt_bnd}, together with the continuity of $\kfunc$ away from the diagonal, then shows that for any $\rho >0$ there exists a positive constant $C_\rho$ such that
$$ |\kfunc(\bx,\by)| < C_\rho,$$
for all $|\bx-(L,0)|<\rho$ and all $\by \in \Gamma_M.$ In the following we set $\rho = 2 \sup_{\by \in \Gamma_M}|y_2|=:H.$

We now bound $\kfunc$ for $|\bx-(L,0)| >\rho.$ From the previous lemma, \cref{lem:rt_bnd}, we have
 $$\Im r(\bx,\by) = |\Im {x_1}| + R(\bx,\by),$$
where $|R| \le H^2/(2|\bx-(L,0)|^2) \le 1/2.$ Moreover, again by \cref{lem:rt_bnd},
$$|r(\bx,\by)| >|\bx-(L,0)|/\sqrt{2}.$$
 Using these last two inequalities in the bound for $H^{(1)}_1,$ and substituting into the expression for $\kfunc,$ we obtain
$$|\kfunc(\bx,\by)| \le C' e^{-k|\Im x_1| + k|R|} \left(\frac{1}{\sqrt{k|\bx-(L,0)|}}+\frac{1}{k|\bx-(L,0)|} \right) \frac{\sqrt{2}\sqrt{|x_1+L|^2+H^2}}{|\bx-(L,0)|},$$
for some $C'$ independent of $\bx$ and $\by.$ We note that $|\bx-(L,0)|\ge\delta$ and, for $\Re x_1 \ge 2L,$ $|\bx-(L,0)| \ge |\bx|/2.$ Averaging these bounds yields $|\bx-(L,0)| \ge {(2\delta + |\bx|)}/{4}.$ Similarly, we observe that
$$\sqrt{|x_1+L|^2+H^2} \le \sqrt{2|\bx|^2+2L^2+H^2}.$$

Using these inequalities in the previous bound for $\kfunc,$ we obtain
$$|\kfunc(\bx,\by)| \le C_{1} \frac{e^{-k|\Im x_1|}}{\sqrt{1+|\bx|}},  $$
for some constant $C_{1}$ depending on $\rho, \delta, H, L,$ and $k.$ Using the local bound, possibly increasing $C$ we have that the above inequality holds for all $\bx \in \Gamma_R$ and $\by \in \Gamma_M.$ We omit the proof for $\by \in \Gamma_M$ and $\bx \in \Gamma_L$ which follows almost identical reasoning.

We now turn to the last two cases: $\bx \in \Gamma_M$ and $\by \in \Gamma_R$ or $\Gamma_L.$ We prove only the first, since the latter follows by analogy. The local estimate (for $|\by-(L,0)|<\rho$) follows identical reasoning as for the previous case. For the far part, again employing similar arguments as for the previous case, we arrive at
$$|\kfunc(\bx,\by)| \le e^{-k|\Im y_1| + k|R|} \left(\frac{1}{\sqrt{k|\by-(L,0)|}}+\frac{1}{k|\by-(L,0)|} \right) \frac{\sqrt{2}H}{|\by-(L,0)|}.$$
 provided that $|\by-(L,0)|^2 >2 H^2$ and $|x_1|<L-\delta.$ The change in the factor on the right is due to the fact that for $\by \in \Gamma_R,$ $\bn_\mathbb{C} = (0,-1)$ and $\bx-\by = (x_1-y_1,x_2),$ since the second component $y_2$ of $\by$ is zero. Using similar bounds on $|\by-(L,0)|$ as in the previous case we arrive at the bound
$$|\kfunc(\bx,\by)| \le C_{2} \frac{e^{-k|\Im y_1|}}{(1+|\by|)^{\frac 32}}$$
where $C_2$ again depends on $\rho,\delta,H,L$ and $k.$ Using the local bound, possibly increasing $C_2$ we have that the above inequality holds for all $\bx \in \Gamma_M$ and $\by \in \Gamma_R.$
\end{proof}

\begin{corollary}\label{cor:decay}
    Let $\sigma \in \cspace^{\alpha,\beta},$ $\alpha \in (0,1/2),$ $0<\beta \le k,$ and $\tGamma \in \mathcal{G}_{\bx}.$ Then 
    $$|K_{\tGamma}[\sigma](\bx)| \le C \frac{e^{-\beta |\Im \bx|}}{(1+|\bx|)^{1/2}}\|\sigma\|_{\alpha,\beta}$$
    for all $\bx \in \tGamma.$
\end{corollary}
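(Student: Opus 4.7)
The plan is to exploit the structure provided by \Cref{lem:sqrt}: the kernel $\kfunc$ vanishes when both $\bx$ and $\by$ lie in the complex tails $\Gamma_L \cup \Gamma_R$ (Remark~\ref{rem:vanish}), decays like $e^{-k|\Im x_1|}/\sqrt{1+|\bx|}$ when the target is in a tail and the source is in $\Gamma_M$, decays like $e^{-k|\Im y_1|}/(1+|\by|)^{3/2}$ when the roles are reversed, and is continuous on the compact product $\Gamma_M \times \Gamma_M$. I would split on the location of $\bx\in\tGamma$ and in each case decompose $K_\tGamma[\sigma](\bx)$ into integrals over $\Gamma_M$, $\tGamma \cap \Gamma_L$, and $\tGamma \cap \Gamma_R$, pairing each kernel estimate with the pointwise bound $|\sigma(\by)| \le \|\sigma\|_{\alpha,\beta}\, e^{-\beta|\Im \by|}(1+|\by|)^{-\alpha}$.

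\textbf{Case 1: $\bx \in \tGamma\cap(\Gamma_L\cup\Gamma_R)$.} By \Cref{rem:vanish} the integrals over $\tGamma\cap\Gamma_L$ and $\tGamma\cap\Gamma_R$ vanish identically, so only the middle piece contributes. On that piece \Cref{lem:sqrt}(1) gives $|\kfunc(\bx,\by)| \le C_1 e^{-k|\Im x_1|}/\sqrt{1+|\bx|}$, and $\Gamma_M$ is compact with $|\sigma|$ bounded by $\|\sigma\|_{\alpha,\beta}$ there. Using $\beta\le k$ and the fact that the second coordinate of points in $\Gamma_L\cup\Gamma_R$ is zero (so $|\Im\bx| = |\Im x_1|$) gives the claimed bound immediately.

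\textbf{Case 2: $\bx \in \Gamma_M$.} Here $|\Im \bx| = 0$ and $|\bx| \le L$, so the claim reduces to a uniform bound $|K_\tGamma[\sigma](\bx)| \le C\|\sigma\|_{\alpha,\beta}$. The contribution from $\Gamma_M$ is controlled by the continuity of $\kfunc$ on $\Gamma_M\times\Gamma_M$ (\Cref{lem:sqrt}(3)). For the tail $\tGamma\cap\Gamma_R$, \Cref{lem:sqrt}(2) together with the $\cspace^{\alpha,\beta}$-bound on $\sigma$ yields
$$|\kfunc(\bx,\by)\sigma(\by)| \le C_2 \|\sigma\|_{\alpha,\beta}\, \frac{e^{-(k+\beta)|\Im y_1|}}{(1+|\by|)^{3/2+\alpha}}.$$
Parameterizing by $u = \Re \gamma_1$ (monotone by condition (ii) of $\mathcal{G}$) and writing $\gamma_1(u) = u + i\tau(u)$ with $\tau(L)=0$ and $\tau'\ge 0$, I would use $|d\by|\le(1+\tau'(u))\,du$ to split the tail integral into $\int_L^\infty du/(1+u)^{3/2+\alpha}$, which is finite because $\alpha>0$, and $\int_L^\infty \tau'(u) e^{-(k+\beta)\tau(u)}\,du/(1+u)^{3/2+\alpha}$, which is bounded by $(1+L)^{-3/2-\alpha}/(k+\beta)$ via the antiderivative $-(k+\beta)^{-1}e^{-(k+\beta)\tau(u)}$. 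The tail $\tGamma\cap\Gamma_L$ is handled identically.

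\textbf{Main obstacle.} The only subtlety is controlling the arclength element $|d\by|$ along the complex tails, since the class $\mathcal{G}$ allows contours whose $\tau'$ may be unbounded. The decomposition $|d\by|\le(1+\tau'(u))\,du$ combined with the telescoping identity $\tau'(u)\,e^{-(k+\beta)\tau(u)}\,du = -(k+\beta)^{-1}\,d\!\left(e^{-(k+\beta)\tau(u)}\right)$ handles this cleanly and produces a constant depending only on $\alpha,\beta,k,$ and $L$, so the final estimate is in fact uniform in $\tGamma$.
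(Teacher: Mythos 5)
Your proof is correct and follows the same route as the paper's (which merely observes that the bound follows from \Cref{lem:sqrt} and that all one needs of $\sigma$ is boundedness). Your careful handling of the arclength element via $\sqrt{1+(\tau')^2}\le 1+\tau'$ and the telescoping antiderivative is a clean way to make the convergence of the tail integrals explicit; note, though, that the paper's remark is that boundedness of $\sigma$ alone suffices (since the kernel already supplies the $(1+|\by|)^{-3/2}$ decay), so the extra $(1+|\by|)^{-\alpha}$ and $e^{-\beta|\Im\by|}$ factors you retained are harmless but not needed.
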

\begin{proof}
    The bound follows immediately from the previous Lemma. All that is required of $\sigma$ is that it be bounded on $\Gammac.$
\end{proof}

\begin{remark}
    Actually, we have shown something stronger. For any $\sigma \in \cspace^{\alpha,\beta},$ $\alpha < (0,1/2),$ $0<\beta\leq k,$ we have shown that $K[\sigma]$ is well-defined and is in $\cspace^{1/2,k}.$
\end{remark}

We are now ready to prove  \cref{prop:Kdef}.
\begin{proof}[Proof of \cref{prop:Kdef}]

Let $\sigma \in \mathcal{C}^{\alpha,\beta},$ $\alpha \in (0,1/2),$ $0<\beta \le k.$ For any $\bx \in \Gamma_L \cup \Gamma_R$ and any $\tGamma \in \mathcal{G}_{\bx },$ the value of $K_{\tGamma}[\sigma]$ at $\bx$ is determined by an integral over $\tGamma \cap \Gamma_M.$ Since every $\tGamma \in \mathcal{G}_{\bx}$ (or in $\mathcal{G}$ for that matter) is identical on $\Gamma_M$ (agreeing with the real curve $\Gamma$) the independence of $K_{\tGamma}[\sigma]$ on $\tGamma$ is clear. Establishing analyticity is also straightforward, since for $\by \in \Gamma_M$ and $\bx \in \Gamma_R$ or $\bx \in \Gamma_L,$ the argument of the square-root in the function $r(\bx,\by)$ lies in the upper half plane or $[0,\infty)$ and therefore does not hit the branch cut of the square-root. Thus $\kfunc$ is analytic in $x_1,$ the first (and only non-zero) component of $\bx.$ Finally, the required decay follows from \cref{cor:decay}.

The only  part that remains to show is that the values along $\Gamma_M$ are independent of $\tGamma.$ This follows immediately by Cauchy's integral theorem, invoking the analyticity of $\sigma$ and of the kernel $\kfunc(\bx,\by)$ for $\by \in \Gamma_{L} \cup \Gamma_{R}$ and their decay properties.
\end{proof}

\begin{lemma}
    Let $\sigma \in \mathcal{C}^{\alpha,\beta},$ $\alpha \in (0,1/2),$ $0 <\beta \le k.$ Then $(1+|\bx|)^\alpha e^{k|\Im \bx|}K[\sigma]$ is equicontinuous on $\Gammac.$
\end{lemma}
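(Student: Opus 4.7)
My plan is to establish equicontinuity for the family $\{w\,K[\sigma] : \|\sigma\|_{\alpha,\beta} \le 1\}$, where $w(\bx) := (1+|\bx|)^{\alpha}e^{k|\Im \bx|}$, in two phases: first truncating to a compact region using the improved decay of $K[\sigma]$, and then extracting a modulus of continuity on this compact region that is uniform in $\sigma$. For the first phase, I would invoke the remark following \Cref{cor:decay}, which places $K[\sigma]$ in $\cspace^{1/2,k}$ with norm controlled by $\|\sigma\|_{\alpha,\beta}$. This gives
\[
w(\bx)|K[\sigma](\bx)| \le C (1+|\bx|)^{\alpha-1/2}\|\sigma\|_{\alpha,\beta},
\]
which, since $\alpha<1/2$, tends to $0$ as $|\bx|\to\infty$ uniformly over the unit ball. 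Thus, given any $\varepsilon>0$, one can choose $R$ so large that $w\,|K[\sigma]|<\varepsilon/3$ outside $B(0,R)$, which handles any pair of points both lying in the exterior.

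For the second phase, set $K_R := \overline{\Gammac}\cap\overline{B(0,R)}$ and note that $w$ is uniformly continuous and bounded there, so it suffices to establish a modulus of continuity for $K[\sigma]$ on $K_R$ that is independent of $\sigma$. Given two nearby $\bx_1,\bx_2\in K_R$, I would select a common curve $\tGamma\in\mathcal{G}$ passing through both and write
\[
K[\sigma](\bx_1)-K[\sigma](\bx_2)=\int_{\tGamma}\bigl[\kfunc(\bx_1,\by)-\kfunc(\bx_2,\by)\bigr]\sigma(\by)\,{\rm d}\by,
\]
splitting the integration into a near zone $N := \{\by\in\tGamma:|\by-\bx_1|\le 2|\bx_1-\bx_2|\}$ and its complement. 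Over $N$, each $\kfunc(\bx_i,\cdot)$ is bounded (for $\bx_i\in\Gamma_M$ this is the classical smoothness of the double-layer kernel on $C^2$ curves; for $\bx_i$ in the complexified portions it follows from \Cref{lem:sqrt}), contributing $O(|\bx_1-\bx_2|)\|\sigma\|_{\alpha,\beta}$. Over $\tGamma\setminus N$, a mean-value estimate together with an appropriate derivative bound $|\nabla_{\bx}\kfunc(\bx,\by)|=O(1/|\bx-\by|)$ near the diagonal (and the analogous polynomial-times-exponential decay at infinity) gives a further $O(|\bx_1-\bx_2|\log(1/|\bx_1-\bx_2|))\|\sigma\|_{\alpha,\beta}$. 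Combining these produces a uniform modulus $\omega(\delta)\sim \delta\log(1/\delta)$ on the unit ball of $\cspace^{\alpha,\beta}$.

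The main obstacle I anticipate is verifying these derivative bounds on $\kfunc$ uniformly over $K_R$, especially across the transition points $(\pm L,0)$ where $\Gammac$ passes from its real portion $\Gamma_M$ to its complexified portions $\Gamma_L,\Gamma_R$, and when $\bx_1,\bx_2$ straddle this junction. I would handle this by a case analysis paralleling the proof of \Cref{lem:sqrt}, leveraging the fact that the same Hankel-function bounds control $(H^{(1)}_0)'$ and $(H^{(1)}_1)'$; in the interior of $\Gamma_L$ and $\Gamma_R$ the analyticity of $K[\sigma]$ from \Cref{prop:Kdef} can alternatively be used with a Cauchy-estimate argument. The choice of common contour $\tGamma\in\mathcal{G}_{\bx_1}\cap\mathcal{G}_{\bx_2}$ poses no difficulty, since $\Gamma$ itself lies in $\mathcal{G}$ and connects all three pieces of $\Gammac$.
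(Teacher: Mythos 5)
The paper's own proof is a single sentence asserting that equicontinuity ``follows immediately from the continuity of the kernel $\kfunc$ and the bounds in \Cref{lem:sqrt}.'' Your two-phase strategy (a tail estimate from the $\cspace^{1/2,k}$ bound, then a modulus of continuity on a compact set) is sound and is exactly the kind of filling-in that one-liner is gesturing at.

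Two points, however, deserve repair. First, your closing claim that a common contour $\tGamma\in\mathcal{G}_{\bx_1}\cap\mathcal{G}_{\bx_2}$ always exists, ``since $\Gamma$ itself lies in $\mathcal{G}$ and connects all three pieces of $\Gammac$,'' is false once either point lies in the interior of $\Gamma_L$ or $\Gamma_R$: the real curve $\Gamma$ contains no point with $\Im\,x_1\neq 0$, and condition (ii) in the definition of $\mathcal{G}$ forbids a single $\tGamma$ from containing two points $\bx_1,\bx_2$ with $\sgn\Re\bigl((\bx_2)_1-(\bx_1)_1\bigr)\neq\sgn\Im\bigl((\bx_2)_1-(\bx_1)_1\bigr)$, a constraint that nearby points in $\Gamma_R$ can easily violate. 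The fix is cheap and worth making explicit: by \Cref{rem:vanish} the kernel vanishes whenever both arguments lie in $\Gamma_L\cup\Gamma_R$, so for $\bx$ in the interior of $\Gamma_L$ or $\Gamma_R$ the integral defining $K[\sigma](\bx)$ reduces to an integral over $\Gamma_M$ alone, which is common to every admissible contour; thus no common $\tGamma$ is actually needed, and the difference $K[\sigma](\bx_1)-K[\sigma](\bx_2)$ can always be written as an integral over $\Gamma_M$ together with the (compactly supported, kernel-smooth) real tails when $\bx_1,\bx_2\in\Gamma_M$. Second, the near/far splitting with the mean-value bound $|\nabla_{\bx}\kfunc(\bx,\by)|=O(1/|\bx-\by|)$ is more machinery than the two-dimensional problem requires: \Cref{lem:sqrt} item~3 states that $\kfunc$ is continuously differentiable for $\bx,\by\in\Gamma_M$, so on the compact set $\Gamma_M\times\Gamma_M$ the gradient is bounded, and you get a Lipschitz modulus directly rather than $\delta\log(1/\delta)$. (The near/far splitting would be the right tool in three dimensions, where the kernel is genuinely weakly singular.) With these adjustments your argument is complete and matches the route the paper intends.
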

\begin{proof}
    Equicontinuity follows immediately from the continuity of the kernel $\kfunc$ and the bounds in \cref{lem:sqrt}.
\end{proof}

Having established the well-defined nature of the operator $K$ given in \cref{def:K_def}, and its bounded mapping from $\cspace^{\alpha,\beta} \to \cspace^{1/2,k} \subset \cspace^{\alpha,\beta},$ we now turn to the proof of \cref{lem:compact}.

\begin{proof}[Proof of \cref{lem:compact}]
Suppose $\sigma_{n}$, $n=1,2,\ldots$, is a bounded sequence in $\cspace^{\alpha, \beta}$. To prove compactness of $K$, it suffices to show that $f_{n} = K[\sigma_{n}]$ has a Cauchy subsequence in $\cspace^{\alpha,\beta}$. From the results, above, we know that that the $f_{n}$'s are equicontinuous on $\Gamma_{\mathbb{C}}$ and in $\cspace^{1/2, k}$. Let $T_R :=\{\bx \in \Gamma_L\cup \Gamma_R,\,|\bx \pm (L,0)|\le R\}\cup \Gamma_M.$ Then for any $\epsilon$, there exists $R_{\epsilon}$ such that,
$$\sup_{\bx \in \Gamma_L\cup \Gamma_R,\,|\bx \pm (L,0)|>R_\epsilon}|f_{n}(\bx)| e^{\beta|\Im \bx|} (1+|\bx|)^\alpha < \epsilon$$
for all $n$; and $f_{n}$ has a Cauchy subsequence when restricted to $T_{R_{\epsilon}}$ in the $\cspace^{\alpha,\beta}$ topology. The proof then follows using a standard diagonalization argument.
\end{proof}

\subsection{Proof of \cref{thm:inject}}

We begin with the following lemma, which establishes that if $(I+K)[\sigma]$ vanishes on any $\tGamma\in \mathcal{G}$ then $(I+K)[\sigma]$ vanishes on $\Gamma.$

\begin{lemma}\label{lem:op_vanish}
    Suppose  $\sigma \in C^{\alpha,\beta}$ for some $0<\beta \leq k$ and $\alpha \in (0,1/2)$ and let $K[\sigma]$ be the function given in \cref{def:K_def}. If $\sigma + K[\sigma]$ vanishes along any curve $\tGamma \in \mathcal{G}$ then $\sigma + K[\sigma]$ vanishes on $\Gamma.$ 
\end{lemma}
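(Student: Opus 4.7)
The plan is to set $h := \sigma + K[\sigma]$ and exploit two structural properties of $h$: continuity on $\overline{\Gammac}$, and analyticity on the open sets $\IInt\Gamma_L$ and $\IInt\Gamma_R$. Analyticity of $\sigma$ in $\IInt\Gamma_L \cup \IInt\Gamma_R$ is built into the definition of $\cspace^{\alpha,\beta}$, while the same properties for $K[\sigma]$ are precisely the conclusion of \Cref{prop:Kdef}. I would then decompose $\Gamma$ into the perturbed arc $\Gamma_M$ and the two real-axis tails $\Gamma_L^{\mathbb{R}} := \{(x_1,0):x_1<-L\}$ and $\Gamma_R^{\mathbb{R}} := \{(x_1,0):x_1>L\}$, and show that $h$ vanishes on each piece in turn.

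First I would argue that every $\tGamma \in \mathcal{G}$ coincides with $\Gamma$ on $\Gamma_M$. Indeed, $\tGamma$ is a simple smooth curve in $\Gammac = \Gamma_L\cup\Gamma_M\cup\Gamma_R$ that tends to infinity in both $\Gamma_L$ and $\Gamma_R$; since these three pieces meet only at the transition points $(\pm L,0)$, the only way $\tGamma$ can reach $\Gamma_R$ from $\Gamma_L$ without self-intersection is to traverse the entire fixed perturbed arc $\Gamma_M$. Hence $\tGamma\cap\Gamma_M = \Gamma_M$ and the vanishing of $h$ on $\tGamma$ delivers $h \equiv 0$ on $\Gamma_M$ immediately.

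Next I would handle the tail $\Gamma_R^{\mathbb{R}}$; the argument on $\Gamma_L^{\mathbb{R}}$ is symmetric. If $\tGamma \cap \Gamma_R$ lies entirely on the real axis, then it equals $\Gamma_R^{\mathbb{R}}$ and we are done. Otherwise, condition (ii) in the definition of $\mathcal{G}$ forces the imaginary part of $\tgamma_1$ to be monotone on $\Gamma_R$, so once the curve lifts off the real axis it remains in the open upper half-plane and escapes to infinity there. Consequently $\tGamma \cap \IInt\Gamma_R$ is a smooth arc with accumulation points in the connected open set $\IInt\Gamma_R$, on which $h$ is analytic. The identity theorem for one complex variable then yields $h \equiv 0$ on $\IInt\Gamma_R$, and continuity of $h$ on $\overline{\Gamma_R}$ extends the vanishing to the real-axis boundary $\Gamma_R^{\mathbb{R}}$, completing the claim.

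The only nontrivial ingredient has already been absorbed into \Cref{prop:Kdef}: namely, the analyticity of $K[\sigma]$ in $\IInt\Gamma_L\cup\IInt\Gamma_R$, which rests on contour deformability inside $\mathcal{G}$ and the kernel decay of \Cref{lem:sqrt}. Once that is in place, this lemma reduces to two applications of the identity theorem plus a boundary-continuity argument, together with the topological observation that any admissible contour must traverse the perturbed arc $\Gamma_M$ in full.
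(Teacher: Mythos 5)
Your proof is correct and follows essentially the same route as the paper: analyticity of $\sigma + K[\sigma]$ on $\IInt\Gamma_L \cup \IInt\Gamma_R$, the identity theorem to propagate the vanishing from the arc $\tGamma$ to all of the open quadrants, and continuity up to the real-axis boundary. The paper's version is more terse (it omits $\Gamma_M$ and the degenerate case where $\tGamma$ stays on the real axis, both of which you handle explicitly), but the core argument is the same.
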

\begin{proof}
Note that by assumption $(I+K)[\sigma]$ is analytic on $\Gamma_{L} \cup \Gamma_{R}.$ If  $\tGamma$ contains a point in the interior of $\Gamma_R$ then analyticity implies that it vanishes everywhere in the interior of $\Gamma_R.$ Since $(I+K)[\sigma]$ is continuous up to its boundary $\Gamma \in \Gamma_R,$ it follows that it vanishes on $\Gamma_R \cap \Gamma.$ Similar reasoning applies to $\Gamma_L.$  
\end{proof}

We also have the following result, which shows that if $(I+K)[\sigma]$ vanishes on $\Gamma,$ then the function $u = \cD_\Gamma [\sigma]$ and its derivatives vanish everywhere in $\Omega.$

\begin{lemma}
    Let $\sigma \in C^{\alpha,\beta}$ for some $\beta >0$ and $\alpha \in (0,1/2)$ and suppose that $(I+K)[\sigma]$ vanishes on $\Gamma.$ Let $u$ be the double-layer potential generated by $\sigma$ in the upper half-space $\Omega,$ i.e. $u(\bx) = \cD_\Gamma[\sigma](\bx)$ for $\bx \in \Omega.$ Then $u \equiv 0$ in $\Omega$. 
\end{lemma}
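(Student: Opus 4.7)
The plan is to verify the hypotheses of \Cref{thm:pde-uniqueness} for the homogeneous Dirichlet problem in $\Omega$, invoking \Cref{thm:d_pde} to supply the Sommerfeld and boundedness conditions needed there. For the Dirichlet trace, note that on $\Gamma$ the kernel $\kfunc$ coincides with $-2$ times the standard double-layer kernel of $\cD_\Gamma$, so $K_\Gamma[\sigma]|_\Gamma = -2\,D_\Gamma[\sigma]$ as a principal-value integral. The interior jump relation (with $\bn$ oriented out of $\Omega$) then gives
\begin{equation*}
\lim_{\by \to \bx,\, \by \in \Omega} u(\by) \;=\; -\tfrac{1}{2}\sigma(\bx) + D_\Gamma[\sigma](\bx) \;=\; -\tfrac{1}{2}\bigl(\sigma + K[\sigma]\bigr)(\bx),
\end{equation*}
which vanishes on $\Gamma$ by hypothesis, so $u|_\Gamma \equiv 0$. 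That $u$ satisfies the homogeneous Helmholtz equation in $\Omega$ is immediate from the definition of $\cD_\Gamma$.

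Next I would extract, from the pointwise identity $\sigma = -K_\Gamma[\sigma]$ on $\Gamma$, the precise outgoing expansion of $\sigma$ required by \Cref{thm:d_pde}. The key observation is geometric: whenever $\bx=(x_1,0)$ and $\by=(y_1,0)$ both lie on the flat tails of $\Gamma$ one has $\bn(\by)=(0,-1)$ and $\bx-\by=(x_1-y_1,0)$, so $\bn(\by)\cdot(\bx-\by)=0$ and hence $\kfunc(\bx,\by)\equiv 0$. Consequently, for any $\bx=(x_1,0)\in \Gamma$ with $|x_1|>L-\delta$, the integral $K_\Gamma[\sigma](\bx)$ receives contributions only from $\by$ in the compact, truly perturbed piece $\{|y_1|\le L-\delta\}$. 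Inserting the classical large-argument expansion $H_1^{(1)}(z) = \sqrt{2/(\pi z)}\,e^{i(z-3\pi/4)}\bigl(1+O(1/z)\bigr)$, using $(\bx-\by)\cdot\bn(\by)=O(|x_1|)$, and expanding $r(\bx,\by)$ in inverse powers of $|x_1|$ (permissible since $\by$ is bounded) then yields
\begin{equation*}
\sigma(x_1,0) \;=\; \frac{e^{ik|x_1|}}{|x_1|^{1/2}}\sum_{\ell=0}^{N}\frac{a_\ell^\pm}{|x_1|^\ell} \;+\; \mathcal{O}\bigl(|x_1|^{-N-3/2}\bigr), \qquad \pm x_1 \to \infty,
\end{equation*}
for arbitrary $N$.

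This expansion matches exactly the hypothesis of \Cref{thm:d_pde}, which then furnishes the existence of $\partial_n u$ on $\Gamma$ together with a uniform far-field expansion $u(r\hat\theta) = e^{ikr}r^{-1/2}\sum_j \alpha_j(\hat\theta)\,r^{-j} + \mathcal{O}(r^{-M-1})$ valid uniformly for $\hat\theta\in S^1$ with $\theta_2>0$. The latter delivers both the Sommerfeld radiation condition and the uniform boundedness required by \Cref{thm:pde-uniqueness}. Combined with the vanishing Dirichlet trace, every hypothesis of \Cref{thm:pde-uniqueness} holds and therefore $u\equiv 0$ in $\Omega$.

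The main technical step is the second paragraph: \Cref{thm:d_pde} demands a precise outgoing asymptotic expansion for $\sigma$, whereas we start only from $\sigma\in \cspace^{\alpha,\beta}$. The geometric cancellation of $\kfunc$ on flat-flat pairs is what reduces the computation to a routine large-argument expansion of a compactly supported integral, rather than a delicate oscillatory integral over the unbounded tails of $\Gamma$; this cancellation is the linchpin of the argument.
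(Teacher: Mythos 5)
Your proof is correct and follows essentially the same route as the paper's: both derive the outgoing asymptotic expansion of $\sigma$ from the identity $\sigma = -K[\sigma]$, exploiting the vanishing of $\kfunc$ on flat-flat pairs (the paper's \Cref{rem:vanish} / \Cref{lem:sqrt}) to reduce the integral to the compact perturbed piece, then expand the kernel at large $|x_1|$ and invoke \Cref{thm:d_pde} followed by \Cref{thm:pde-uniqueness}. The additional detail you supply (the explicit $-\tfrac12\sigma + D_\Gamma[\sigma]$ trace computation and the written-out $H_1^{(1)}$ large-argument expansion) merely makes explicit what the paper's proof leaves implicit via items 2 and 3 of \Cref{thm:d_pde}.
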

\begin{proof}
    From \cref{lem:sqrt} and the equation $\sigma=-K[\sigma]$ it follows that, for $x_1\in\Gamma_L\cup\Gamma_R$
    \begin{equation}
        \sigma(x_1)=-\int_{\Gamma_M}g(x_1,\by)\sigma(y)ds(y).
        \end{equation}
        Since $g(x_1,\by)$ has an asymptotic expansion as $x_1\to\pm\infty,$ which holds uniformly for $\by$ in a compact set,
     we conclude that $\sigma$ must also have asymptotic expansions like those in \cref{eq:asymp_f}. Hence \cref{thm:d_pde} implies that the hypotheses of \cref{thm:pde-uniqueness} are also satisfied and therefore $u\equiv 0.$
\end{proof}
Using this last result, and the continuity of the derivative of the double-layer potential across $\Gamma,$ we can also show that $u = \cD_\Gamma[\sigma]$ vanishes in the lower half-space $\mathbb{R}^2\setminus \Omega.$

\begin{lemma}
    Let $\sigma \in C^{\alpha,\beta}$ for some $\beta >0$ and $\alpha \in (0,1/2)$ and suppose that $(I+K)[\sigma]$ vanishes on $\Gamma.$ Let $u$ be the double-layer potential generated by $\sigma$ in the lower half-space $\mathbb{R}^2\setminus\Omega,$ i.e. $u(\bx) = \cD_\Gamma[\sigma](\bx)$ for $\bx \in \mathbb{R}^2 \setminus \Omega.$ Then $u \equiv 0$ in $\mathbb{R}^2\setminus\Omega.$
\end{lemma}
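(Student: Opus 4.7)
The plan is to combine the jump relations for the double-layer potential with the vanishing of $u$ in the upper half-space (just established in the preceding lemma) and then invoke the uniqueness theorem for the Helmholtz equation with Neumann data in the lower half-space.

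First I would use the fact that while the double-layer potential itself has a jump across $\Gamma$, its normal derivative is continuous across $\Gamma$ (a classical jump relation, valid since $\sigma$ is smooth enough on $\Gamma$). Denote by $u^+$ and $u^-$ the double-layer potential evaluated in $\Omega$ and in $\mathbb{R}^2\setminus\overline{\Omega}$ respectively. By the previous lemma $u^+ \equiv 0$ in $\Omega$, and hence $\partial_n u^+ \equiv 0$ on $\Gamma$. Continuity of the normal derivative across $\Gamma$ then gives $\partial_n u^- \equiv 0$ on $\Gamma$.

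Next I would verify that $u^-$ satisfies the hypotheses of \Cref{thm:pde-uniqueness} applied to the lower half-space. The function $u^-$ solves the homogeneous Helmholtz equation in $\mathbb{R}^2\setminus\overline{\Omega}$ by construction, and the normal derivative on the boundary vanishes by the step above. For the radiation condition and boundedness at infinity, I would invoke the asymptotic expansion for $\sigma$ derived in the proof of the preceding lemma, namely that the identity $\sigma = -K[\sigma]$ combined with the asymptotic behavior of the kernel $\kfunc$ forces $\sigma$ to admit an expansion of the form required in \Cref{thm:d_pde}. Applying \Cref{thm:d_pde} in the lower half-space (as noted in its accompanying remark) then yields the uniform Sommerfeld radiation condition and uniform boundedness required by \Cref{thm:pde-uniqueness}.

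Finally, with all hypotheses of \Cref{thm:pde-uniqueness} verified for $u^-$ in the lower half-space with vanishing Neumann data, I conclude $u^- \equiv 0$ in $\mathbb{R}^2\setminus\overline{\Omega}$. The main obstacle is really just being careful about bookkeeping: ensuring the orientation conventions for the normal are compatible between the two sides of $\Gamma$, and confirming that the asymptotic expansion of $\sigma$ derived from $\sigma = -K[\sigma]$ has the uniform form needed to feed into the double-layer radiation result of \Cref{thm:d_pde} for both signs of $x_1 \to \pm \infty$. These are essentially the same checks that were needed for the upper half-space case in the preceding lemma, so no genuinely new analytic estimates are required.
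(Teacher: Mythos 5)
Your proof matches the paper's intended argument exactly: the paper's preceding sentence ("Using this last result, and the continuity of the derivative of the double-layer potential across $\Gamma$...") signals precisely the route you take — use $u\equiv 0$ in $\Omega$ plus continuity of the normal derivative of $\cD_\Gamma[\sigma]$ across $\Gamma$ to get vanishing Neumann data from below, then invoke \Cref{thm:d_pde} (lower half-space version) and \Cref{thm:pde-uniqueness}. Your additional bookkeeping about the asymptotic expansion of $\sigma$ from $\sigma=-K[\sigma]$ is the same verification already carried out in the preceding lemma's proof, so nothing new is required.
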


The previous two lemmas imply the following corollary.
\begin{corollary}\label{cor:sig_vanish}
        Let $\sigma \in C^{\alpha,\beta}$ for some $\beta >0$ and $\alpha \in (0,1/2)$ and suppose that $(I+K)[\sigma]$ vanishes on $\Gamma.$ Then $\sigma \equiv 0$ on $\Gamma.$
\end{corollary}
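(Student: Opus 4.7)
The plan is to invoke the two preceding lemmas together with the standard jump relations for the Helmholtz double-layer potential. The two lemmas immediately give us that if $(I+K)[\sigma]$ vanishes on $\Gamma$, then $u(\bx) = \cD_\Gamma[\sigma](\bx)$ vanishes identically both in $\Omega$ and in $\mathbb{R}^2 \setminus \Omega$. In particular, the non-tangential boundary limits of $u$ from above and from below both exist and equal zero pointwise on $\Gamma$.

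The second step is to combine these two vanishing one-sided limits with the classical jump relation for the double-layer potential. For a sufficiently regular density $\sigma$ on $\Gamma$, one has
\begin{equation*}
\lim_{\by \to \bx, \,\by \in \Omega} \cD_\Gamma[\sigma](\by) - \lim_{\by \to \bx,\,\by \in \mathbb{R}^2\setminus\Omega} \cD_\Gamma[\sigma](\by) = \sigma(\bx), \qquad \bx \in \Gamma,
\end{equation*}
with the sign convention dictated by the choice of normal (this is exactly what produces the $-\tfrac12 \sigma + D_\Gamma[\sigma]$ on each side in \eqref{eqn:first_BIE}; the jump is $\sigma$). Since both one-sided limits vanish, we read off $\sigma(\bx) = 0$ for every $\bx \in \Gamma$.

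The only loose end is verifying that $\sigma \in \cspace^{\alpha,\beta}$ is regular enough for the jump relation to hold in a pointwise sense. This is not really an obstacle: on the compact piece $\Gamma_M$, $\sigma$ inherits continuity (and in fact more) from the identity $\sigma = -K[\sigma]$, since $K[\sigma]$ is a continuous function on $\Gammac$ by the equicontinuity lemma above. On the flat tails $|x_1| > L$, continuity of $\sigma$ together with its algebraic decay from $\cspace^{\alpha,\beta}$ is more than enough to justify the standard jump formula. Hence $\sigma \equiv 0$ on $\Gamma$, as claimed. The argument is essentially a one-line consequence of the two lemmas once the jump relation is invoked, so the only care needed is the bookkeeping of the regularity of $\sigma$ at the junction points $x_1 = \pm L$, which is already handled by the analysis leading to \Cref{lem:compact}.
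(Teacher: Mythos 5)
Your proof is correct and takes exactly the approach the paper intends: the two preceding lemmas give vanishing of $\cD_\Gamma[\sigma]$ on both sides of $\Gamma$, and the jump relation for the double layer then forces $\sigma\equiv 0$. (With the paper's orientation convention the jump $\lim_{\Omega}-\lim_{\mathbb{R}^2\setminus\Omega}$ is $-\sigma$ rather than $+\sigma$, but as you note the sign is immaterial since both one-sided limits vanish.) Your added remark about justifying pointwise regularity via $\sigma=-K[\sigma]$ is a reasonable piece of bookkeeping that the paper leaves implicit.
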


We are now ready to prove \cref{thm:inject}.

\begin{proof}[Proof of \cref{thm:inject}]
Suppose that $\sigma \in \cspace^{\alpha,\beta},$ $0<\alpha <1/2$, $0<\beta \leq k$ and $\sigma + K[\sigma]$ vanishes on any $\tGamma \in \mathcal{G}.$ From \cref{lem:op_vanish}, $\sigma + K[\sigma]$ vanishes on the real contour $\Gamma.$ Then, by \cref{cor:sig_vanish}, $\sigma \equiv 0$ on  $\Gamma.$ In particular, $\sigma \equiv 0$ in $\Gamma_M.$ By construction of the operator $K_{\tGamma},$ see \cref{rem:vanish}, it follows immediately that $\sigma$ vanishes on $\Gamma_L \cup \Gamma_R$ and hence on all of $\Gammac.$ 

\end{proof}

\begin{proof}[Proof of \cref{thm:invert}]
We begin by recalling from \cref{lem:compact} that  for all $\alpha \in (0,1/2),$ and $0<\beta \leq k,$ the operator $K:\mathcal{C}^{\alpha,\beta} \to \mathcal{C}^{\alpha,\beta}$ is compact. In Theorem \ref{thm:inject} it was also shown that $(I+K):\mathcal{C}^{\alpha,\beta} \to \mathcal{C}^{\alpha,\beta}$ is injective. It follows immediately from standard results in Riesz theory, see \cite{kress2014} for example, that $K$ is surjective with a bounded inverse.

\end{proof}

\begin{proof}[Proof of \cref{thm:main}]
The first part of the result has already been shown in \cref{thm:invert}. Suppose now $f \in \cspace^{1/2,k} \subset \cspace^{\alpha,k}$ with $\alpha < 1/2$. Thus, by \cref{thm:invert} there exists $\sigma \in \cspace^{\alpha,k}$ which then satisfies
\begin{equation}
\sigma(\bx) = f(\bx) - K_{\Gamma}[\sigma](\bx) \, .
\end{equation}
Since $f$ and $\kfunc$ both have  asymptotic expansions like those in \cref{eq:asymp_f}, it follows that $\sigma$ has the same  type of asymptotic expansion. Finally, since $f\in \cspace^{1/2,k}$ and $K: \cspace^{\alpha,\beta} \to \cspace^{1/2,k}$ for all $\alpha\in(0,1/2), 0<\beta \leq k$, we conclude that $\sigma$ is also in  $\cspace^{1/2,k}$. 

The fact that $\mathcal{D}_{\Gamma}[\sigma]$ satisfies \cref{eq:bvp} follows from the proof of \cref{thm:d_pde}. Finally, using estimates similar to the ones derived in \cref{lem:sqrt}, it is easy to show that for $\sigma \in \cspace^{1/2,k}$, and $\bx\in \Omega \cap \{ |x_{1}| \leq L \}$, that
$\mathcal{D}_{\Gamma}[\sigma](\bx) = \mathcal{D}_{\tGamma}[\sigma](\bx)$ for all $\tGamma \in \mathcal{G}$.

\end{proof}

%\begin{remark}
%The solution $\mathcal{D}_{\tGamma}(\bx)$ in fact agrees with $\mathcal{D}_{\Gamma}(\bx)$ on a larger cone in $\Omega$, $\bx \in \Omega \cap \{ x_{1} + x_{2} > -L+\delta \} \cap \{ x_{1} - x_{2} < L-\delta \}$.
%The additional region is shown, bounded by red-lines in~\cref{fig:ext_region}.
%\begin{figure}[h!]
%\begin{center}
%\includegraphics[width=0.8\linewidth]{./ext_region2.pdf}
%\caption{Red lines bound an additional region in which $\mathcal{D}_{\tGamma}(\bx)=\mathcal{D}_{\Gamma}(\bx).$ }\label{fig:ext_region}
%\end{center}
%\end{figure}
%\end{remark}

\section{Extension to three dimensions}
The results presented in~\Cref{sec:main_res} extend naturally to solutions of the Helmholtz Dirichlet scattering problem on
perturbed half-spaces in three dimensions as well. The key ingredient for the method is the outgoing nature
of solutions of the associated integral equation.
As in the two dimensional case, the solution of the Helmholtz Dirichlet problem in three dimensions can be expressed
using a double layer potential with density $\sigma$, denoted by $\cD^{3d}_{\Gamma}[\sigma]$, and given by
\begin{equation}
u(\bx) = \cD^{3d}_{\Gamma}[\sigma](\bx) = \int_{\Gamma} \frac{(\bx-\by) \cdot \bn(\by)}{4 \pi. |\bx - \by|^3}(1 - ik |\bx - \by|) \exp(ik |\bx - \by|) \sigma(\by) ds_{\by} \, .
\end{equation}

Upon imposing the boundary condition, and using the standard jump relations for the Helmholtz double layer potential~\cite{colton}, we obtain the following integral equation for the density $\sigma$, 
\begin{equation}\label{eqn:first_BIE_3D}
-\frac{1}{2} \sigma(\bx) + D^{3d}_{\Gamma}[\sigma](\bx) = f(\bx) \, , \quad  \bx \in \Gamma \,  ,
\end{equation}
where $D^{3d}_{\Gamma}[\sigma]$ is the principal value part of the double layer potential in three dimensions restricted to the boundary $\Gamma$. We assume that the surface $\Gamma$ is flat outside of a ball of radius $L$ centered at the origin, and that in the flat region, the surface is parametrized as $\{(x_{1}, x_{2}, 0):\:\sqrt{x_{1}^2+x_{2}^2} >L\}$. As before, if both $\bx$ and $\by$ lie outside the ball of radius $L,$ then the kernel function for $D^{3d}_{\Gamma}$ vanishes.

The outgoing nature of the Green's function implies that the solution $\sigma$ to the integral equation above, and the data $f$ satisfy the estimate 
\begin{equation}
\sigma(\bx), f(\bx) \lesssim \frac{e^{ik |\bx|}}{|\bx|} \, ,
\end{equation}
as $|\bx| \to \infty$  for outgoing data generated either by a collection of point/volumetric sources or incident plane waves/wave packets. If we replace  $|\bx|$ by $$r(\bx) = \sqrt{x_{1}^2 + x_{2}^2 + x_{3}^2},$$
then we note that the integral equation
can be deformed to a complex surface $\tilde{\Gamma}$ with the coordinates complexified independently as in the 2d-case, i.e.  the complexified surface is parameterized by 
$$\{(x_{1} + i \eta(x_1), x_{2} + i \eta(x_2), 0):\: x_1^2+x_2^2>L^2\}$$
where $\eta(t)$ is a monotonic function that is non-zero only outside the support of the perturbation, i.e. $\eta(t) = 0$ for $|t| \leq L$,  and $\pm\eta(t) >0$ for $\pm t>L$. 

The proof of this result relies on the following key components: 
\begin{enumerate}
    \item Uniqueness of solutions to the differential equation with Dirichlet and Neumann boundary conditions under the assumption of solutions being uniformly outgoing (\Cref{thm:pde-uniqueness}).
    \item The analogue of \Cref{thm:d_pde} showing that, in 3D, layer potentials with `outgoing densities' result in outgoing solutions. \item The analogue of \Cref{lem:rt_bnd} giving estimates for the decay rate of the Green's function, and outgoing data on the complex surface $\tilde{\Gamma}.$ 
    \item The analogue of \Cref{lem:compact}, proving the compactness of the integral operator from $\cspace^{\alpha, \beta}$ to itself for $\alpha \in (0,1/2)$, and $0 < \beta \leq k.$ 
    \item Finally analogues of \Cref{thm:inject,thm:invert}, establishing the analyticity properties of the Green's function, the boundary data, and the integral operator 
\end{enumerate}

The uniqueness of solutions to Dirichlet and Neumann boundary value problems is proved in~\cite{odeh1963uniqueness}. The proof that the outgoing densities result in outgoing solutions in three dimensions turns out to be rather technical and will be published in a forthcoming paper. The rest of the results extend to three dimensions modulo the following minor caveats.

First, due to the decay of the Green's function, the results extend to $\alpha \in (0,1)$ as opposed to $\alpha \in (0,1/2)$. Secondly, the proof of compactness, which relies on the estimates of the kernel of the integral operator, needs to be suitably modified. 
In particular, the kernel is not continuously differentiable in three dimensions for $\bx, \by \in \Gamma_{M}$. However, the singularity is weak and does not affect the proof of compactness from $\cspace^{\alpha, \beta}$ to itself. Moreover, it is easier to prove the estimate that $g(\bx, \by)$, for fixed $\by$, and $\bx \in \Gamma_{L} \cup \Gamma_{R}$, satisfies an estimate with an exponential decay of $\exp{(-(k-\delta) |\Im{x_{1}}|)}$ for any $\delta>0$, as opposed to $\exp{(-k |\Im{x_{1}}|)}$ (with a similar result holding for fixed $\bx$, and $\by \in \Gamma_{L} \cup \Gamma_{R}$). A consequence of this weaker estimate is that the integral operator can be shown to be compact from $\cspace^{\alpha,\beta}$ to itself for $\alpha \in (0,1)$, and $0<\beta<k$. The compactness of the integral operator in this space, has no affect on the remainder of the proof.

\section{Error analysis}
In this section, we detail our numerical approach to discretize and solve the integral equation~\cref{eq:on_curve} for a suitable choice of $\Gamma_{\mathbb{C}}\in \mathcal{G}$ and bound the error in the solution when computed using the truncated integral equation.  

\subsection{Choice of the contour, cutoff, and discretization}
We note from \cref{thm:invert} that for any $\tGamma \in \cG,$ \cref{eq:on_curve} has a unique solution. The advantage of choosing a $\tGamma \in \cG \neq \Gamma$ lies in truncation,  since for $f \in \mathcal{C}^{1/2,k},$ the density is also in $\sigma \in \mathcal{C}^{1/2,k}$. Let $\bx = (x_{1},x_2) \in \tGamma$. For any given $\eps$, we truncate the curve $\tGamma \in \cG$ at $x_{1} = \pm \Ltrunc$, so that $k|\Im{x_{1}}| > \log{(1/\eps)}$ for $|\Re{x_{1}}| > \Ltrunc$. This truncation implies that $|\sigma(\bx)|, |f(\bx)| < C \eps$, for some constant $C$, and for all $|\Re(x_{1})| > \Ltrunc$. 
Let $\tGammat \subset \tGamma$ denote the truncated curve $\tGamma \cap \{ |\Re{x_{1}}| \leq \Ltrunc \}$. The truncation of surfaces in the three dimensional case follows, \emph{mutatis mutandis}. One should think of $\eps$ as ``machine-epsilon."

In two dimensions, the curves are discretized using the boundary integral equation package {\tt chunkie}~\cite{chunkie}, which uses piecewise Legendre polynomial expansions to represent boundary curves and densities. For all experiments in this section, we use $16$th order expansions for representing both the boundary and the densities. The kernel of the double layer potential is weakly singular and is computed using locally corrected quadrature rules, which use standard Gauss-Legendre quadrature for well-separated points where the kernel is smooth, and specialized generalized Gaussian quadrature~\cite{bremer_2012b} rules for nearby points where the kernel is weakly-singular. The generalized Gaussian quadrature rule extends as is to curves $\tGamma \in \cG$ with non-zero imaginary parts, as complexifying the coordinates does not affect the singularity of the kernel of the integral operator.

In three dimensions, even though the coordinate complexification method extends in a trivial manner, the design of appropriate quadrature for the associated weakly singular integrals is a more subtle question. In particular, the standard locally corrected quadrature rules which rely on generalized Gaussian quadratures or high-order corrected trapezoid rules require suitable modification to allow for first fundamental forms which are complex symmetric as opposed to being positive definite. We follow the discretization procedure and high-order corrected trapezoid quadratures discussed in~\cite{hoskins2023quadrature}, which relies on an equispaced discretization of the geometry and adapts the high-order corrected trapezoid quadratures of~\cite{wu2020corrected} for complex symmetric first fundamental forms. 

For our numerical examples, we define suitable smooth, complexified contours $\tGamma$ using the function 
\begin{equation}
\phi_{i, \alpha}(x) = \frac{\alpha}{3} \left( \phi(3(x + L + t_{0})) -  \phi( 3(L+t_{0} - x))\right)
\end{equation} 
with $t_{0} = \sqrt{\log{(1/\varepsilon)}}/2$, and $\phi(x)$ given by
\begin{equation}
\phi(x) = -\frac{1}{2}\int_{x}^{\infty} \erfc(t) \, dt = \frac{1}{2} \left( x \erfc{(x)} - \frac{\exp{(-x^2)}}{\sqrt{\pi}} \right) \, .
\end{equation}
These choices guarantee that $|\phi_{i, \alpha}(x)| < \varepsilon$ for $|x| < L$, and that $\phi_{i, \alpha}$ to precision $\varepsilon$ is
$\alpha( x + L + t_{0})$ for $x < -L -2 t_{0}$, and $\alpha (x - L - t_{0})$ for $x > L + 2t_{0}$. In several of our examples, we will also employ a suitable {\it bump function} to guarantee that the $x_2$-component of $\tGamma$ is sufficiently small when $\Im x_1$ is non-zero. Here we use the function $\psi_{L - \delta}(x)$ given by
\begin{equation}
\psi_{s}(x) = 2 - \left(\erfc{(2(x + s - t_{0}))} + \erfc{(2(s - x - t_{0})}) \right) \, .
\end{equation}
For these parameters, $|\psi_{L - \delta}(x)| < \eps$ for all $|x| > L-\delta$, and $|\psi_{L-\delta}(x) - 2| < \eps$ for all $|x| < L - \delta - t_{0}$, with $t_{0} = \sqrt{\log{(1/\varepsilon)}}/2$.

\subsection{Truncation error analysis} 
\label{subsec:trunc-err}
Suppose that the complexified curve $\tGamma$ with contour $\phi_{i,1}$ is truncated at $\Ltrunc = L + t_{0} + \max{(t_{0}, \log{(1/\sqrt{\eps})}/k)}$ and the truncated curve is denoted by $\tGammat$. Furthermore, suppose that $\tsigma$ is a solution to the truncated integral equation
\begin{equation}
\label{eq:trunc-eq} 
-\frac{1}{2} \tsigma(\bx) + \cD_{\tGammat}[\tsigma](\bx) = f(\bx) \, , \quad \bx \in \tGammat \, .
\end{equation}
We would like to show that there exists an $0<\eps_{0}$ such that, for all $\eps < \eps_{0}$, the truncated integral equation~\cref{eq:trunc-eq} is invertible and $\|\tsigma-\sigma\|_{\alpha,k;\tGamma}=O(\eps),$   for any $\alpha < 1/2$ (we assume that $\alpha < 1/2$ for the rest of this analysis). For $\tGamma\subset\Gamma_{\bbC},$ here and in the sequel we let
\begin{equation}
    \|f\|_{\alpha,\beta;\tGamma}:=\sup\limits_{\bx\in\tGamma}e^{\beta|\bx|}(1+|\bx|)^{\alpha}|f(\bx)|.
\end{equation}

\begin{lemma}
There exists $0<\eps_{0}$, such that for all $\eps<\eps_{0}$, and $\Ltrunc = L + t_{0} + \max{(t_{0} + \log{(1/\sqrt{\eps})/k})}$, such that the integral operator $-1/2 + D_{\tGammat}$ is invertible on the restriction of $\cspace^{\alpha, k}$ to $\tGammat$.
\end{lemma}
\begin{proof}
Let $\Gamma_{1} = \Gamma_{M}$, $\Gamma_{2} = \tGammat \setminus \Gamma_{M}$, and $\Gamma_{3} = \tGamma \setminus \tGammat$, and let $K_{j,\ell}$ denote the corresponding blocks of the integral operators mapping functions in $\cspace^{\alpha, k}$ restricted to $\tGamma_{\ell}$ to functions in $\cspace^{\alpha, k}$ restricted to $\tGamma_{j}$. Let $\sigma_{j}$ denote the restrictions of $\sigma \in \cspace^{\alpha, k}$ to $\Gamma_{j}$, $j=1,2,3$. Owing to the properties of the kernel $g_{k}$ (see~\cref{lem:sqrt}), the integral equation on $\tGamma$ takes the form
\begin{equation}
\left(-\frac{1}{2} I 
+
\begin{bmatrix}
K_{1,1} & K_{1,2} & K_{1,3} \\
K_{2,1} & 0 & 0 \\
K_{3,1} & 0 & 0  
\end{bmatrix}
\right)
\begin{bmatrix}
\sigma_{1}(\bx) \\
\sigma_{2}(\bx) \\
\sigma_{3}(\bx)
\end{bmatrix} = 
\begin{bmatrix}
f_{1}(\bx) \\
f_{2}(\bx) \\
f_{3}(\bx)
\end{bmatrix}
\end{equation}
\Cref{thm:invert} states that the above equation is invertible. Since the $(3,3)$ block of the integral equation is just the identity operator, using the Schur complement, we conclude that the operator
\begin{equation} 
\left(-\frac{1}{2} I 
+
\begin{bmatrix}
K_{1,1} + 2 K_{1,3} K_{3,1} & K_{1,2} \\
K_{2,1} & 0   
\end{bmatrix}
\right)
\end{equation}
is invertible with a bounded inverse in the $\cspace^{\alpha, k}$ topology. This operator differs from the truncated integral equation by the $2K_{1,3} K_{3,1}$ term in the $(1,1)$ block. To prove invertibility of the truncated integral equation, it suffices to show that $2K_{1,3} K_{3,1}$ in norm is $O(\varepsilon)$. Note that the $\cspace^{\alpha,k}$ topology restricted to $\Gamma_{1}$ is equivalent to the topology of continuous functions with the $\sup$ norm. Let $\sigma$ be a continuous function on $\Gamma_{1}$ with sup norm $1$. Then
$\mu = K_{3,1}[\sigma]$ is given by
\begin{equation}
\mu(\bx) = \int_{-L}^{L} g_{k}(\bx, \by) \sigma(\by) \, dy \, \quad \bx \in \tGamma \setminus \tGammat \,.
\end{equation}
Using the bound on $g_{k}(\bx, \by)$ in~\cref{lem:sqrt}, it follows that there exists $C_{1}$ such that
\begin{equation}
\label{eq:mubound}
|\mu(\bx)| \leq 2 L C_{1} e^{-k |\Im(x_{1})|} \, , \quad \bx \in \tGamma \setminus \tGammat \, .
\end{equation}
Let $\tau = K_{1,3}[\mu] = K_{1,3} K_{3,1} [\sigma]$, then $\tau$ is given by
\begin{multline}
\tau = \\ \int_{-\infty}^{-\Ltrunc} + \int_{\Ltrunc}^{\infty} g_{k}(\bx, (y_{1} + i\phi_{i,1}(y_{1}), 0)) \mu((y_{1} + i\phi_{i,1}(y_{1}), 0)) \sqrt{1 + \phi_{i,1}'^2(y_1)}  dy_{1} \,,
\end{multline}
for $\bx \in \Gamma_{M}$.
For $\by \in \tGamma \setminus \tGammat$, \cref{lem:sqrt} implies that $|g_{k}(\bx, \by)| \leq C_2 e^{-k |\Im y_1|}$.
Finally, the choice of the contour implies $\phi_{i,1}(y_{1}) \geq y_{1} - L - t_{0}- \varepsilon$ for $y_{1} > L + 2t_{0}$, and that $|\sqrt{1+ \phi_{i,1}'^2}| \leq C_{3}$ for some constant $C_{3}$. Combining these estimates, and the bound on $\mu$ in~\cref{eq:mubound}, we get that there exists $C$ such that
\begin{equation}
\begin{aligned}
\int_{\Ltrunc}^{\infty}  &\left|g_{k}(\bx, (y_{1} + i\phi_{i,1}(y_{1}), 0)) \mu((y_{1} + i\phi_{i,1}(y_{1}), 0)) \sqrt{1 + \phi_{i,1}'^2(y_1)} \right| dy_{1} \leq \\
&\tilde{C} \int_{\Ltrunc}^{\infty} e^{-2k \phi_{i,1}(y_{1})} dy_{1} \\
&\leq \tilde{C}\int_{\Ltrunc}^{\infty} e^{-2k (y_{1} - L - t_{0} - \varepsilon)} dy_{1} \leq C e^{-2k \log{(1/\sqrt{\varepsilon})}/k} \leq C \varepsilon
\end{aligned}
\end{equation}
The bound for the contribution to $\tau$ from $(-\infty, -\Ltrunc)$ follows in a similar manner. Thus we conclude that $K_{1,3} K_{3,1}$ in the $\sup$-norm topology is $O(\varepsilon)$, and hence there exists an $0<\varepsilon_0$  so that the truncated integral equation is invertible for $\varepsilon<\varepsilon_0$.
\end{proof}

Note that even though we solve for $\tsigma$ on $\tGammat$, it can be extended to $\tGamma$ via the formula
\begin{equation}
\label{eq:tsig_ext}
\tsigma(\bx) = -2f(\bx) + 2\cD_{\Gamma_{M}}[\tsigma](\bx)\, ,
\end{equation} 
since the kernel $g_{k}(\bx, \by)$ is $0$ for $\bx, \by \in \tGamma \setminus \Gamma_{M}$.

\begin{lemma}
Suppose $\alpha<1/2$, and $\tsigma$ satisfies~\cref{eq:trunc-eq}, and for $\tGammat$ truncated at $\Ltrunc$ with $\eps < \eps_{0}$. Then $|\tsigma - \sigma|_{\alpha, k} < C\eps |f|_{\alpha,k}$ 
\end{lemma}
\begin{proof}
 By linearity, it suffices to take $f$ with norm 1.
Let $v = \sigma - \tsigma$, using the properties of the kernel $g_{k}$, $v$ satisfies
\begin{multline}\label{int-eqn:untrunc}
-\frac{1}{2} v(\bx) + \cD_{\tGamma}[v](\bx) =\\ \begin{cases}
\cD_{\Gamma_{3}} [\tsigma](\bx) = -2 K_{1,3}[f] + 2 K_{1,3} K_{3,1}[\tsigma](\bx) \, ,\quad &\bx \in \Gamma_{M} \,,\\
0 \, ,\quad &\bx \in \tGamma \setminus \Gamma_{M} \,,
\end{cases} \
\end{multline}
where $\tsigma$ on $\Gamma_{3}$ is defined via~\cref{eq:tsig_ext}.  Using an estimate similar to the one for $K_{1,3} K_{3,1}[\tsigma]$, it can be shown that $K_{1,3}f$ is also $O(\varepsilon)$ for $f\in \cspace^{1/2,k}$. 
Then, the data for the integral equation is clearly in $\cspace^{1/2,k}$ (continuity follows from the flatness of the geometry for $|x_{1}| \in (L-\delta, L]$), with $\cspace^{\alpha, k}$ norm $O(\varepsilon)$. Since the integral equations,~\eqref{int-eqn:untrunc} and~\eqref{eq:trunc-eq}  have uniformly bounded inverses in the $\cspace^{\alpha, k}$ topology, for $\varepsilon<\varepsilon_0,$ we conclude that $v$ is $O(\eps)\|f\|_{\alpha,k}$ in the $\cspace^{\alpha, k}$. 
\end{proof}

Finally, we would like to show that for $x \in \Omega_{E} = \Omega \cap \{|x_{1}|<L-\delta_{1}\}$, for any $\delta_{1} \in (0,\delta]$, the solution computed via the truncated density $\tilde{\sigma}$ is an $O(\eps)$ approximation to the solution $u$. It suffices to show that for $v = \sigma - \tsigma$ which is $O(\eps)$ in the $\cspace^{\alpha, k}$ topology, that $\cD[v](\bx)$ for $\bx \in \Omega_{E}$ is $O(\eps)$. 
\begin{lemma}
Suppose that the boundary $\Gamma_{M}$ is of class $\mathcal{C}^{2}$, and that $v$ satisfies~\cref{int-eqn:untrunc}, then there exists $C>0$, such that $|\cD_{\tGamma}[v](\bx)| \leq C \eps$ for all $\bx \in \Omega_{E}$.
\end{lemma}
\begin{proof}
We first show that $v$ is H\"{o}lder continuous on $\Gamma_{M}$. This follows from the facts that, (a) the operators $K_{1,3}$, and $K_{1,3}K_{3,1}$ map restrictions of functions in $\cspace^{\alpha, k}$ to H\"{o}lder continuous functions on $\Gamma_{M}$, and (b) $\cD_{\Gamma_{M}}$ maps continuous functions of $\Gamma_{M}$ to H\"{o}lder continuous functions on $\Gamma_{M}$ as well. Thus,
\begin{equation}
    v(\bx) = -2\cD_{\tGamma}[v] + 4 K_{1,3}[f](\bx) + 2 K_{1,3}K_{3,1}[\tsigma](\bx) \, ,
\end{equation}
for $\bx \in \Gamma_{M}$ is H\"{o}lder continuous. 

Suppose that $\psi$ is a partition of unity on $\tGamma$ which is $1$ on $\{|x_{1}| \leq L - 2\delta_{1}/3\}$ and $0$ for $\{|x_{1}|\geq L-\delta_{1}/3\}$. Let $T=\Omega_{E} \cap |x_{2}|\leq 2H$ be a tubular neighborhood of the boundary, where $H$ as before is the maximum $x_{2}$ extent of $\Gamma_{M}$. For $\cD_{\tGamma}[v \psi](\bx)$, and $\bx \in T$, the estimate follows from boundedness of $\cD$ from 
H\"{o}lder continuous functions on the boundary to H\"{o}lder continuous functions in the bulk. 
The contributions $\cD_{\tGamma}[v \psi](\bx)$ and $\bx \in \Omega_{E} \setminus T$, and $\cD_{\tGamma \cap \Gamma_{M}}[v(1-\psi)](\bx)$, and $\bx \in \Omega_{E}$, the estimate follows from an $L^{\infty}$ norm on the kernel, and $v$ being $O(\eps)$. All that remains is to estimate $w = \cD_{\tGamma \setminus \Gamma_{M}}[v](\bx)$, since $\psi = 0$ for $\bx \in \tGamma \setminus \Gamma_{M}$.

\begin{equation}
w = \int_{-\infty}^{-L} + \int_{L}^{\infty} g_{k}(\bx, (y_{1} + i\phi_{i,1}(y_{1}),0)) v((y_{1} + i\phi_{i,1}(y_{1}),0)) \sqrt{1+\phi_{i,1}'^2(y_{1})} dy_{1}
\end{equation}
Since $v$ is $O(\eps)$ in the $\cspace^{\alpha,k}$ topology, there exists $C>0$
such that $|v(y_{1} + i\phi_{i,1}(y_{1}),0)| \leq C \eps e^{-\Im{\phi_{i,1}(y_{1)}}}$ for $|y_{1}|>L$. Note that $\Im{(r(\bx,\by))} = \Im{(\sqrt{(x_{1} - y_{1} - i\phi(y_{1}))^2 + x_{2}^2})} \geq 0$, and $|r(\bx,\by)|>0$, for $\bx \in \Omega_{E}$, and $\by \in \tGamma \setminus \Gamma_{M}$, which guarantees that $g_{k}(\bx,\by)$ is uniformly bounded there. The imaginary part is non-negative since $x_{1} - y_{1} - i \phi(y_{1})$ is in the first or third quadrant, which implies its square is in the upper half-plane, and so is $x_{2}^2$. This bound combined with the estimate on $v$ for $|y_{1}|>L$ completes the proof. 
\end{proof}
\section{Numerical examples}
In the following, we illustrate the performance of our approach through several numerical examples. We consider $f \in \mathcal{C}^{1/2,k}$ in two dimensions and $f \in \mathcal{C}^{1,k}$ in three dimensions.
\subsection{Accuracy illustration in two dimensions}
Suppose that the interface is parametrized by
\begin{equation}
    \begin{bmatrix} x_{1}(t) \\ x_{2}(t) 
    \end{bmatrix} 
    = \begin{bmatrix}
    t \\
    \exp{(-t^2/4)} \cos{(t)} \psi_{L-\delta}(t)  
    \end{bmatrix} \, , \quad t \in (-\infty, \infty) \, ,
\end{equation}
with $L = 10$, and $\delta = 1$. The truncated complexified contour $\tGammat$ is parametrized as
\begin{equation}
    \begin{bmatrix} x_{1}(t) \\ x_{2}(t) 
    \end{bmatrix} 
    = \begin{bmatrix}
    t  + i\phi_{i, 1}(t) \\
    \exp{(-t^2/4)} \cos{(t)} \psi_{L-\delta}(t)  
    \end{bmatrix} \, , \quad t \in [-\Ltrunc, \Ltrunc] \, ,
\end{equation}
with $\Ltrunc = L + t_{0} + \max{(t_{0}, \log{(1/\sqrt{\eps})}/k)}$. 

To test the accuracy of our approach, we first consider an analytical solution test. Suppose that the boundary data $f(\bx)$ is given by $H_{0}^{(1)}(k r(\bx, \bx_{0}))$, with $\bx_{0} = (1/10, - 1)$, which lies below $\Gamma,$ and $k = 2\pi$. By uniqueness, the solution $u$ in the upper half plane is also given by $H_{0}^{(1)}(k r(\bx, \bx_{0}))$. In the left panel of~\cref{fig:2d-analytical-solution}, we plot the solution in the upper half plane, and the middle panel shows the $\log$ of the pointwise error in the computed solution. The solution in the volume is computed using the smooth quadrature rule for the panels in $\Gamma_{L} \cup \Gamma_{R}$, and locally corrected quadrature based on adaptive integration for panels in $\Gamma_{M}$. The solution has more than $12$ digits of accuracy for $|x_{1}| \leq L$, excluding a small region close to $\bx = (\pm L,0)$, which can be attributed to the quadrature errors due to near singular interactions from panels in $\Gamma_{L}$, and $\Gamma_{R}$.  The right panel in~\cref{fig:2d-analytical-solution} shows the extension of the solution, $\sigma,$ to the integral equation into part of the region $\Gamma_{R}$, which in this case can be evaluated using its values on $\Gamma_{M}$ and the identity 
\begin{equation}
\sigma(\bx) = -2 ( f(\bx) - \cD_{\Gamma_{M}}[\sigma|_{\Gamma_{M}}])(\bx) \, ,\quad \bx \in \Gamma_{R} \,,
\end{equation}
and note that $\sigma(\bx)$ has the expected asymptotic behavior  in $\Gamma_{R}$ as given by~\cref{eq:sig_asymp}.

\begin{figure}[h!]
\centering
\includegraphics[width=\textwidth]{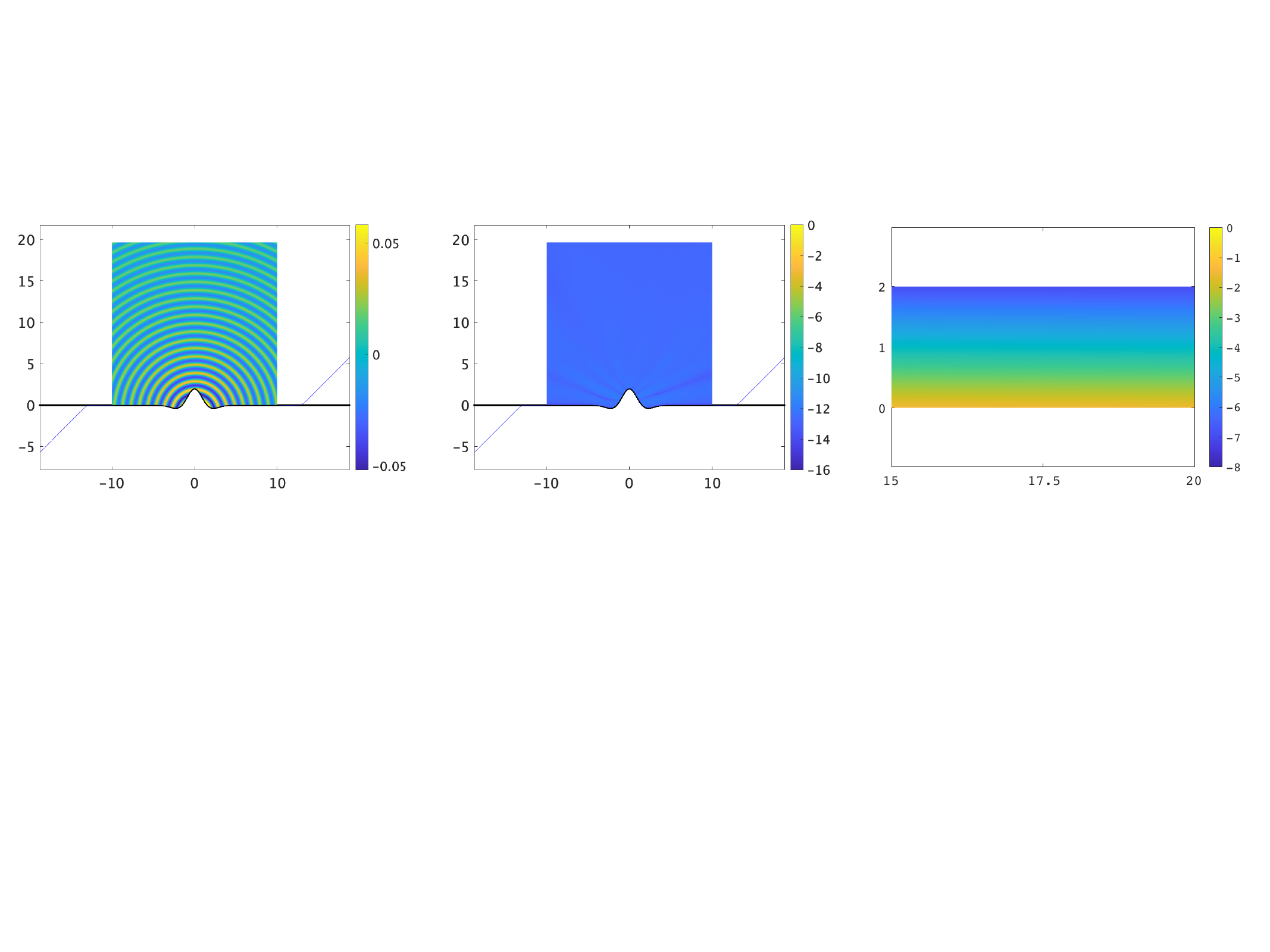}
\caption{Solution for data generated by a point source at $(1/10,-1)$. {\bf Left:} the real part of the scattered field. {\bf Middle:} the log (base 10) error computed using the analytic solution (in this case, the standard Helmholtz Green's function). In both cases the black line denotes the real part of the interface, and the blue line is its imaginary part. {\bf Right:} $\log_{10}|\sigma|$ in part of $\Gamma_R.$  }
\label{fig:2d-analytical-solution}
\end{figure}

To determine the impact of coordinate complexification, for the same boundary data, we measure the error at a single point $\bx = (-1,2)$ in the upper half plane as we vary the truncation point and the slope of the complexified contour. Suppose that the complexified contour $\tGammat$ is parametrized as above but with $\phi_{i, \alpha}$ as the imaginary part instead of $\phi_{i,1}$, and that $\alpha$ and $\Ltrunc$ are varied so that $x_{1}(\Ltrunc) = \eta_{r} + i \eta_{i}$ covers the rectangle $\eta_{r} \in [15,20]$, and $\eta_{i} \in [0,2]$. The result is plotted in~\cref{fig:small_analytic_endpoint}. The accuracy of the solution scales approximately as $O(e^{-\eta_{i} k})$, thus illustrating that the imaginary part of truncated complexified curve must scale like $\log{(1/\eps)}/k$  in order to achieve precision $\varepsilon$.

\begin{figure}[h!]
\centering
\includegraphics[width=0.45\textwidth]{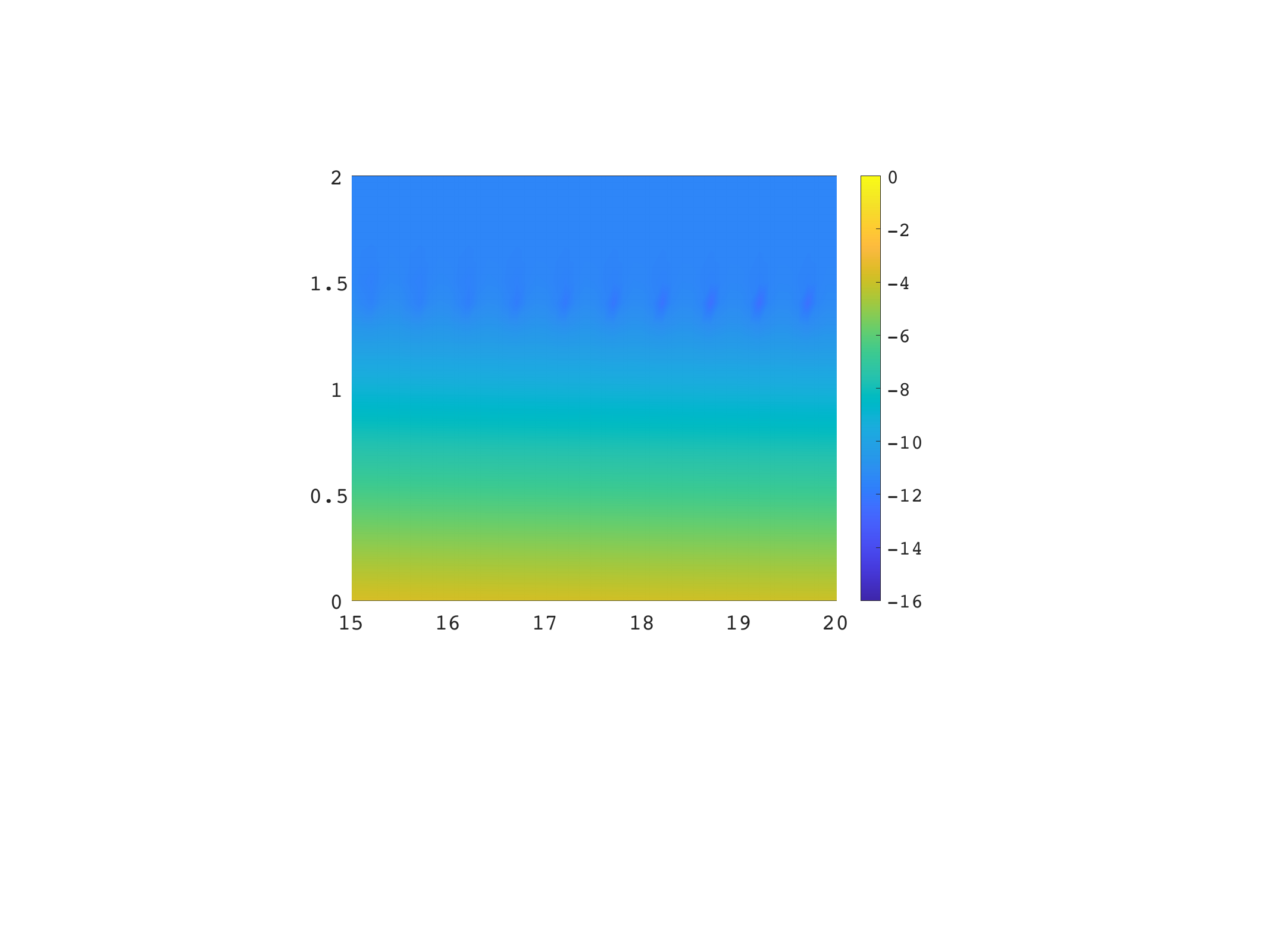}
\caption{Dependence of the error on the endpoints of the solution contour $\tilde{\Gamma}.$ The horizontal and vertical axis correspond to the real and imaginary parts of the endpoint of $\tilde{\Gamma}$ in $\Gamma_R$ ($\Im \,(\tilde{\Gamma})$ was chosen to be odd). The color denotes the $\log_{10}$ relative error.}
\label{fig:small_analytic_endpoint}
\end{figure}

In~\cref{fig:small_pwave_test}, we plot the scattered field from the interface due to an incident incoming plane wave at $\theta = -\pi/4$. Note that the reflected plane wave from the flat contour is also accounted for in the incident plane wave, i.e.
\begin{equation}
\uin = e^{ik \left(x_{1} \cos{(\theta)} + x_{2} \sin(\theta) \right)} - e^{ik \left(x_{1} \cos{(\theta)} - x_{2} \sin(\theta) \right)} \, ,
\end{equation}
and $f = -\uin$. Including the reflected wave ensures that $f(\bx) \equiv 0$ for $|x_{1}|> L$, and thus remains $0$ on the complexified contour as well (as opposed to a plane wave which grows exponentially in $\Gamma_{L} \cup \Gamma_{R}$). We also plot the error in the solution at $\bx = (-1, 2)$ as a function of the number of points used to discretize the interface. In this example, we use $\Ltrunc = L + 8 > L + t_{0} + \max{(t_0, \log{(1/\sqrt{\eps})}/k)}$, so that the length of the panels on $\Gamma_{L}, \Gamma_{M}$, and $\Gamma_{R}$ scale in a commensurate manner. Since an analytical solution is not known for this case, the error is measured by comparing the solution to a reference solution computed with the boundary discretized using $N=2016$ points. The error decays at the expected rate of $O(N^{-16})$.

\begin{figure}[h!]
\centering
\includegraphics[width=\textwidth]{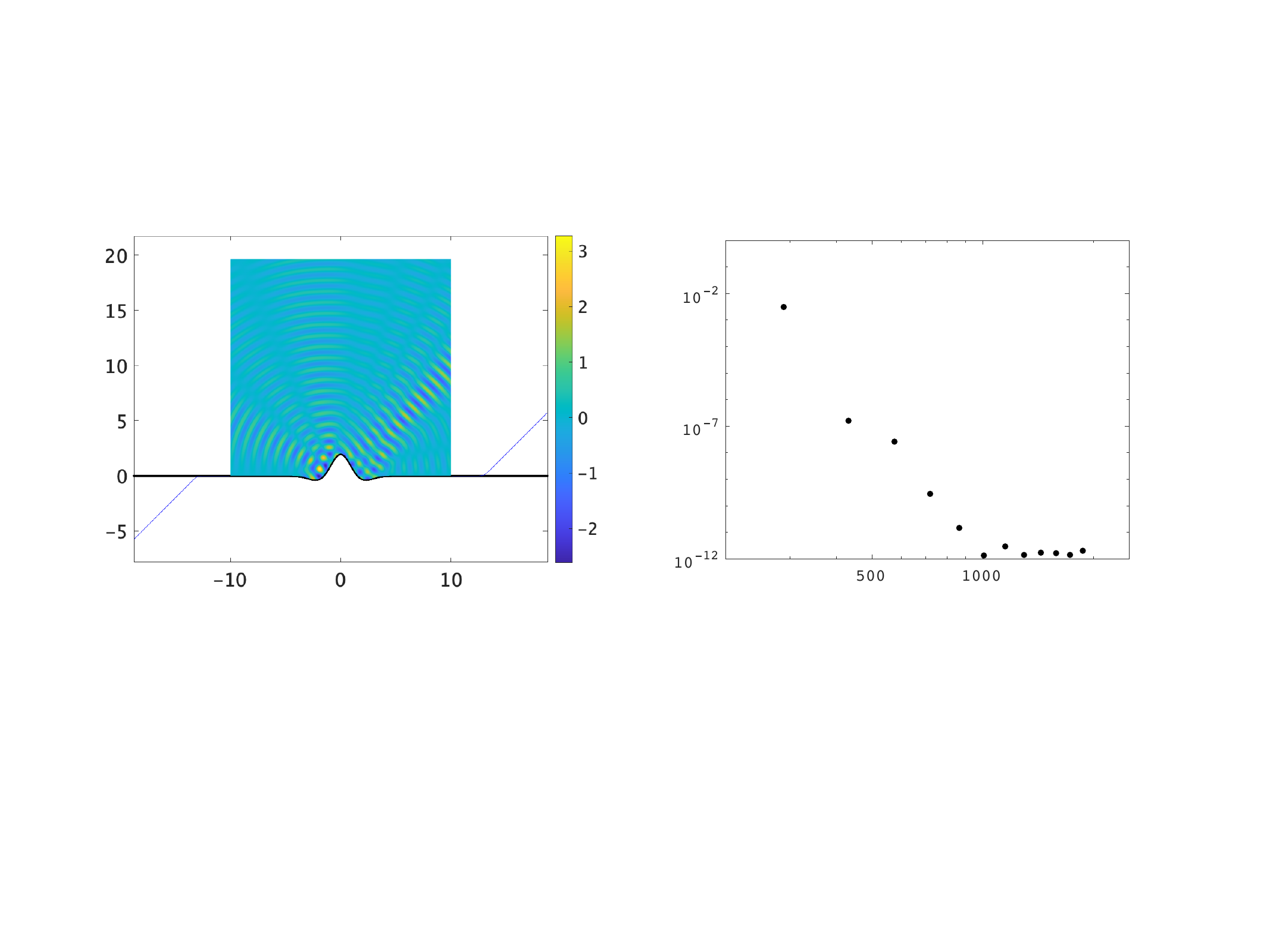}
\caption{Solution of a Helmholtz Dirichlet problem with wavenumber $k=2\pi$ in a half-space. The data was generated from a plane source at $-45^{\rm o}$ to the interface. The reflected plane wave (at $45^{\rm o}$) is also subtracted. {\bf Left:} real part of the scattered field. {\bf Right:} Self-convergence plot of the error at $(-1,2)$ as a function of number of points used in the discretization.}
\label{fig:small_pwave_test}
\end{figure}

\subsection{Acoustic scattering from complicated two dimensional interface}
In this section, we illustrate the performance of the coordinate complexification approach on a more complicated interface due to an incoming point source, and a Gaussian beam. The interface in this setup is parametrized as
\begin{equation}
    \begin{bmatrix} x_{1}(t) \\ x_{2}(t) 
    \end{bmatrix} 
    = \begin{bmatrix}
    t \\
    h(t)  \psi_{L-\delta}(t)  
    \end{bmatrix} \, , \quad t \in (-\infty, \infty) \, ,
\end{equation}
with 
\begin{equation*} 
h(t) = 2\cos{(2\pi t + 0.8)} - \cos{(10\pi/13t + 1.5)} + 0.6 \cos{(\sqrt{2} \pi t + 3.2)} \, ,
\end{equation*} 
$L = 20$, and $\delta = 1$. The truncated complexified contour $\tGammat$ is parametrized as
\begin{equation}
    \begin{bmatrix} x_{1}(t) \\ x_{2}(t) 
    \end{bmatrix} 
    = \begin{bmatrix}
    t  + i\phi_{i, 1}(t) \\
    h(t) \psi_{L-\delta}(t)  
    \end{bmatrix} \, , \quad t \in [-\Ltrunc, \Ltrunc] \, ,
\end{equation}
with $\Ltrunc = L + t_{0} + \max{(t_0, \log{(1/\sqrt{\eps})}/k)}$. 

We compute the solution due to a point source in the upper half plane placed at $\bx_{0} = (0.1, 10)$, i.e.
$f(\bx) = -H_{0}^{(1)}(k r(\bx, \bx_{0}))$, and due to a Gaussian beam with source located at $\bx_{g} = (-10 + 4i; 20 - 8i)$, i.e.
$f(\bx) = -1.5 \times 10^{-23} H_{0}^{(1)}(k r(\bx, \bx_{g})))$. Note that the Gaussian  beam is scaled so that the incident field is $O(1)$ for $\bx \in \mathbb{R}^{2}$. The accuracy of the computed solutions determined through an analytical solution test is at least $10^{-11}$. In the left and middle panels of~\cref{fig:pt_src}, we plot the real part of the incident field, and the real part of the scattered field; the right panel shows the absolute value of the total field for the point source boundary data. In~\cref{fig:gb}, we plot the corresponding results for the Gaussian beam. For both of the examples, we use $k = 2\pi$ as before.

\begin{figure}[h!]
\centering
\includegraphics[width=\textwidth]{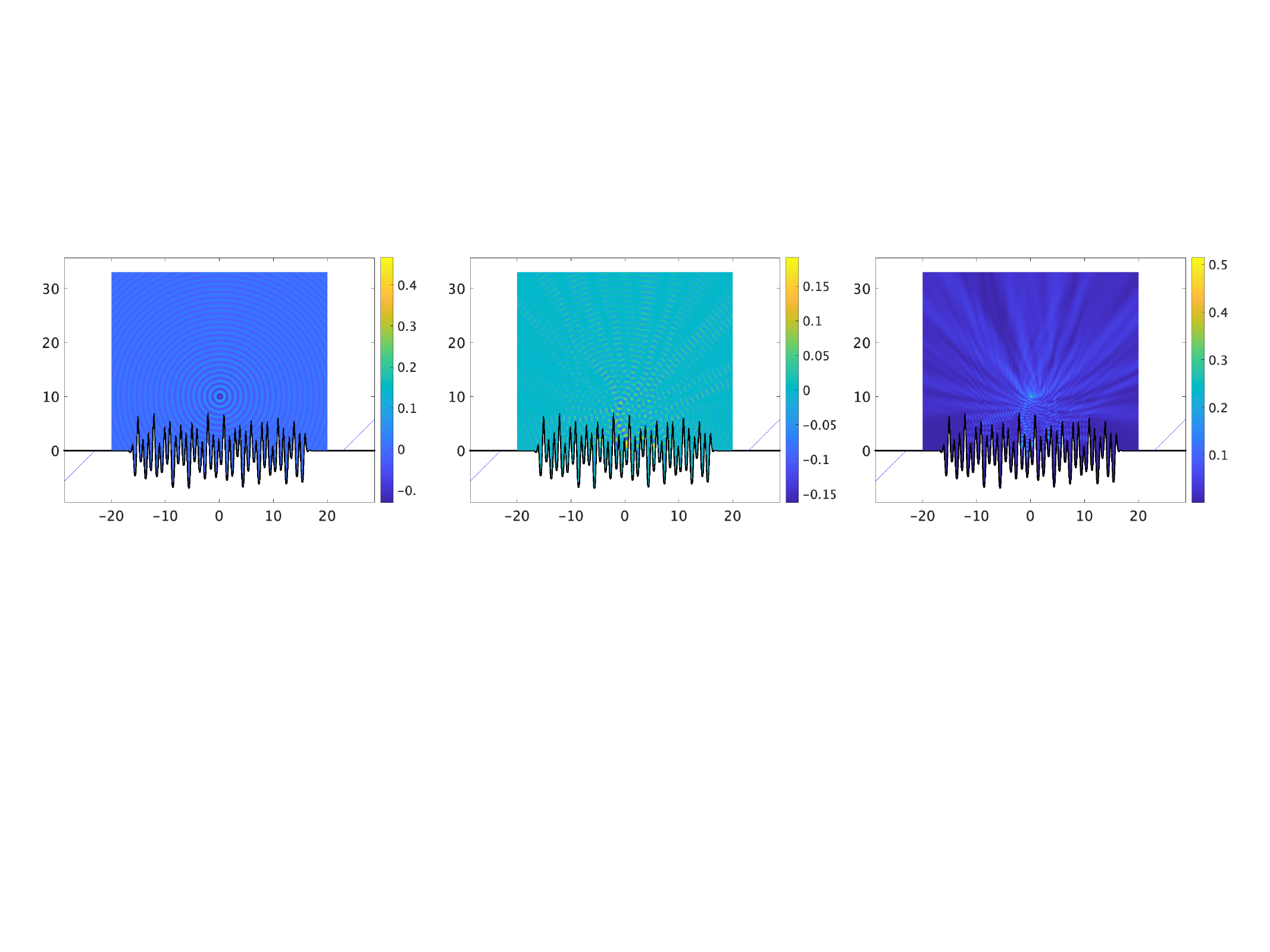}
\caption{Solution due to a point source located at $(1/10, 10)$. {\bf Left}: real part of the incident field. {\bf Middle}: real part of the scattered field. {\bf Right}: absolute value of the total field. In all three cases the black line denotes the real part of the interface, and the blue line is its imaginary part.}
\label{fig:pt_src}
\end{figure}

\begin{figure}[h!]
\centering
\includegraphics[width=\textwidth]{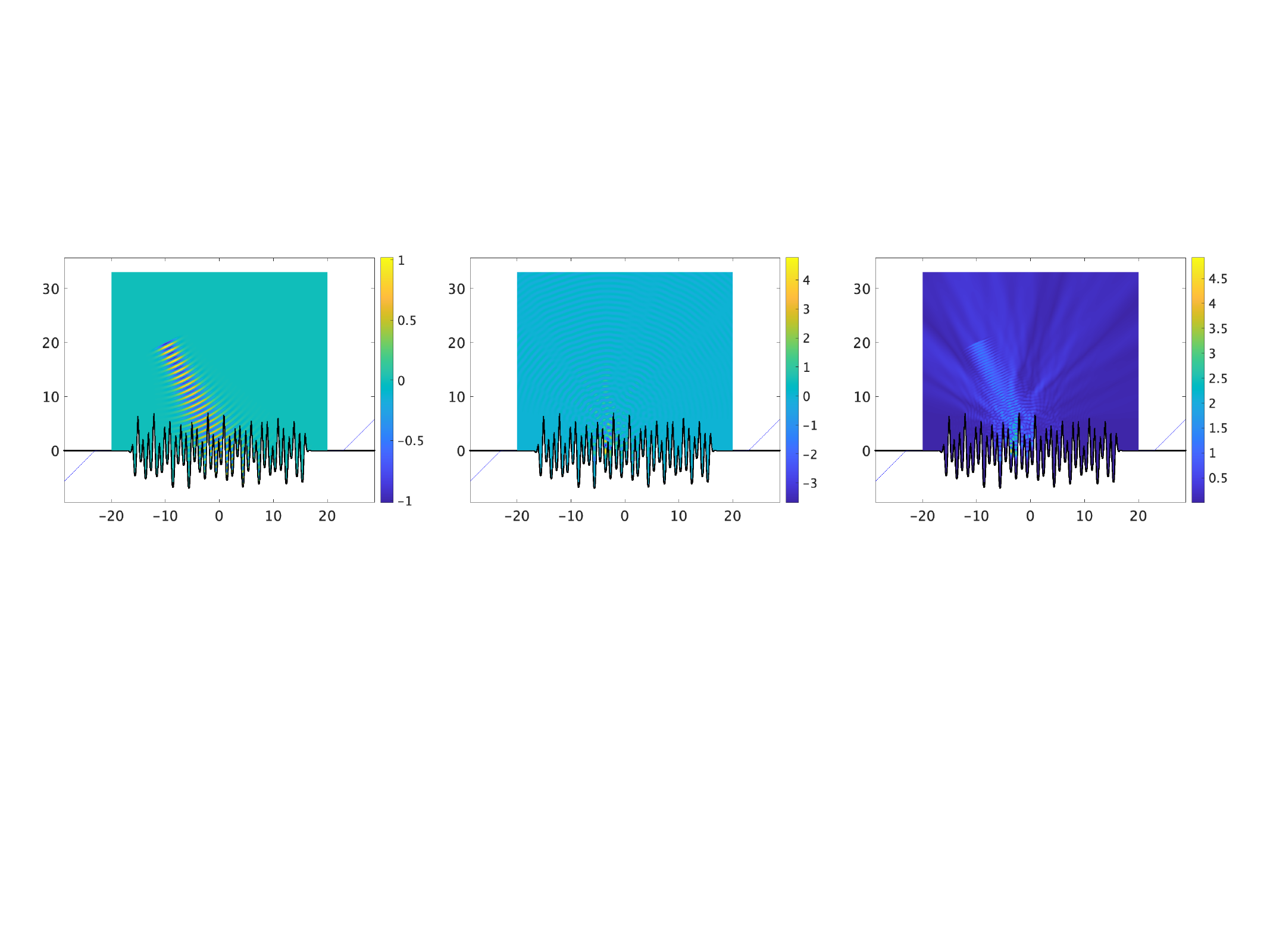}
\caption{Solution due to a Gaussian beam located at $(-10 + 4i, 20 - 8i)$. {\bf Left}: real part of the incident field. {\bf Middle}: real part of the scattered field. {\bf Right}: absolute value of the total field. In all three cases the black line denotes the real part of the interface, and the blue line is its imaginary part.}
\label{fig:gb}
\end{figure}

\subsection{Three dimensional examples}
To illustrate the performance in three dimensions, we consider scattering from a perturbed half-plane parametrized
as
\begin{equation}
\begin{bmatrix}
x_{1}(u,v) \\ 
x_{2}(u,v) \\
x_{3}(u,v) 
\end{bmatrix}
= \begin{bmatrix}
\left( 4 e^{-\frac{(u^2 + v^2)}{10}} + \left(2 - \erfc{(\sqrt{u^2 + v^2} + 6)}\right)/2 \right) u \\
\left( 4 e^{-\frac{(u^2 + v^2)}{10}} + \left(2 - \erfc{(\sqrt{u^2 + v^2} + 6)}\right)/2 \right) v \\
2 e^{-\frac{(u^2 + v^2)}{10}} + 1.5e^{-2(\sqrt{u^2 + v^2} - 8)^2}
\end{bmatrix}
\, , \quad (u,v) \in (-\infty, \infty)^2 \, .
\end{equation}
The truncated complexified contour $\tGammat$ is parameterized as
\begin{equation}
\begin{bmatrix}
x_{1}(u,v) \\ 
x_{2}(u,v) \\
x_{3}(u,v) 
\end{bmatrix}
= \begin{bmatrix}
\left( 4 e^{-\frac{(u^2 + v^2)}{10}} + \left(2 - \erfc{(\sqrt{u^2 + v^2} + 6)}\right)/2 \right) u + \phi_{i,3d}(u) \\
\left( 4 e^{-\frac{(u^2 + v^2)}{10}} + \left(2 - \erfc{(\sqrt{u^2 + v^2} + 6)}\right)/2 \right) v + \phi_{i,3d}(v) \\
2 e^{-\frac{(u^2 + v^2)}{10}} + 1.5e^{-2(\sqrt{u^2 + v^2} - 8)^2}
\end{bmatrix}
\, , \quad (u,v) \in (-30, 30)^2 \, .
\end{equation}
with 
\begin{equation*}
\phi_{i,3d}(t) = \phi(0.75 (t + 10) - \phi(0.75(10 - t)) \, .
\end{equation*}
We plot the geometry and the complexified contours in~\cref{fig:3d_geom}.

\begin{figure}[h!]
\centering
\includegraphics[width=0.45\textwidth]{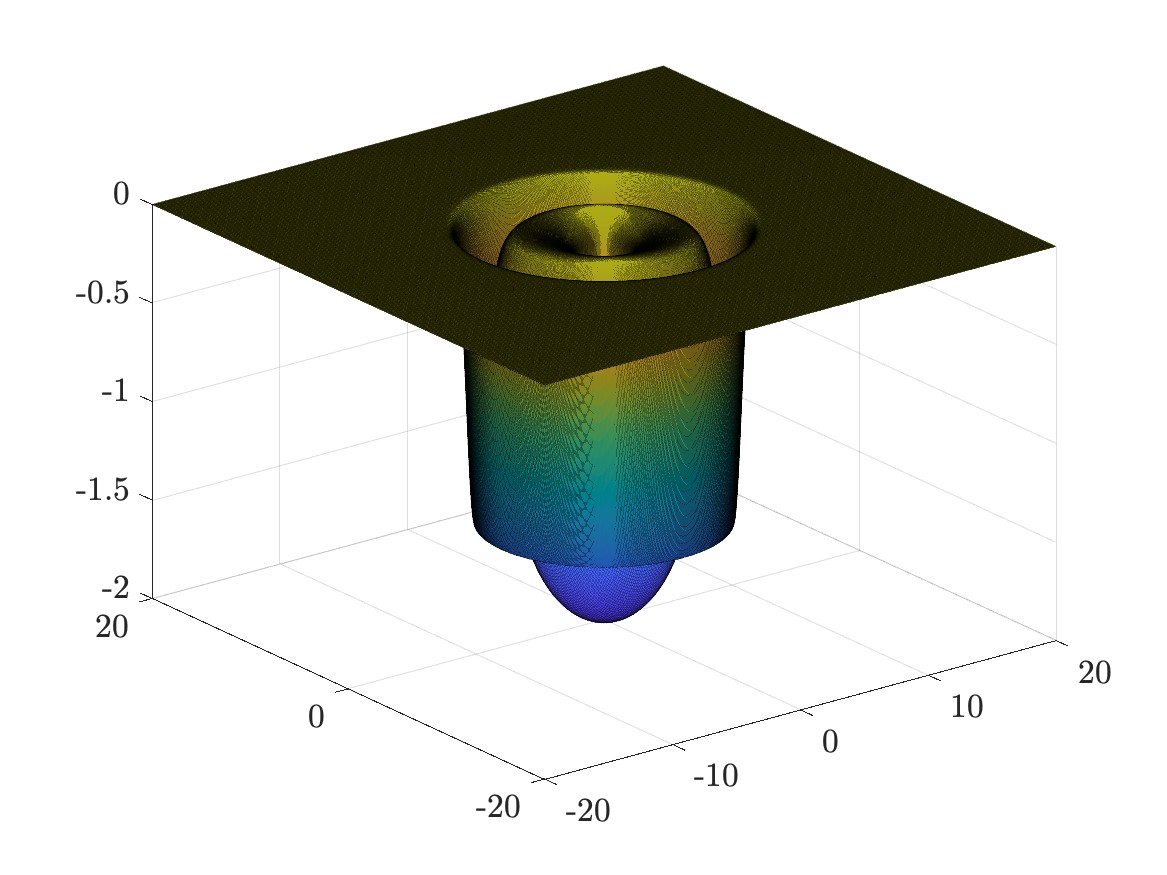}
\includegraphics[width=0.45\textwidth]{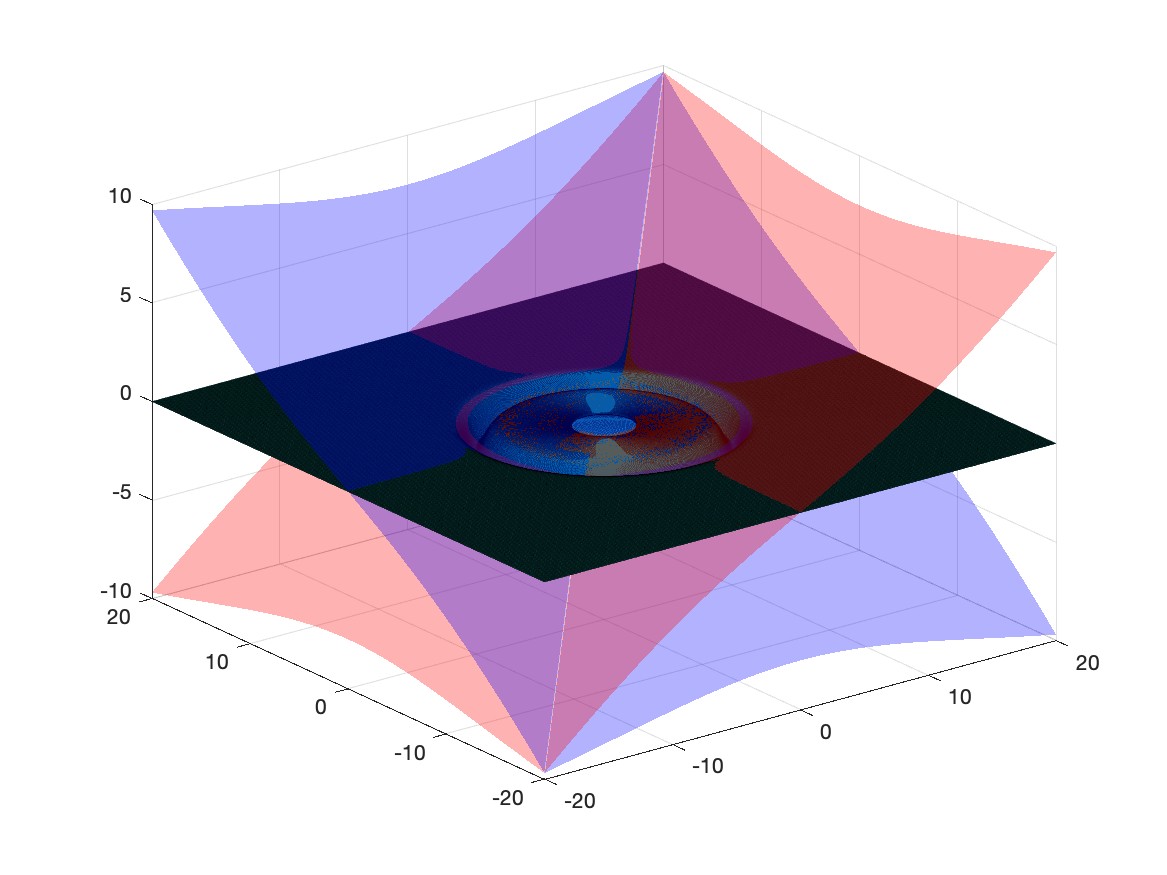}
\caption{Geometry for three-dimensional examples. {\bf Left}: the real surface. The region $\Omega$ is taken to be the portion of $\mathbb{R}^3$ above the surface. {\bf Right}: visualization of complexification. The red surface is the imaginary part of the $x_1$ component (plotted against the real parts of $x_1$ and $x_2$). The blue surface corresponds to the imaginary part of the $x_2$ component (plotted against the real parts of $x_1$ and $x_2$). }
\label{fig:3d_geom}
\end{figure}

In this example, the wavenumber $k = \pi$.
When the boundary is discretized with $1001 \times 1001$ equispaced points in $(u,v)$ using $5$th order quadrature corrections, the accuracy of the computed solution estimated via an analytical solution test is approximately $10^{-4}$. In the left panel of~\cref{fig:3d_pwave} we plot the $\log_{10}$ of the absolute value of the density on the surface due to an incident plane wave with wave-vector $k(1/2, 1/2, 1/\sqrt{2})$ (the reflected wave, as before, is included in the incident field), the middle panel shows the real part of the scattered field and the right panel the absolute value of the total field along the plane $\{x_2=0\}.$

\begin{figure}[h!]
\centering
\includegraphics[width=\textwidth]{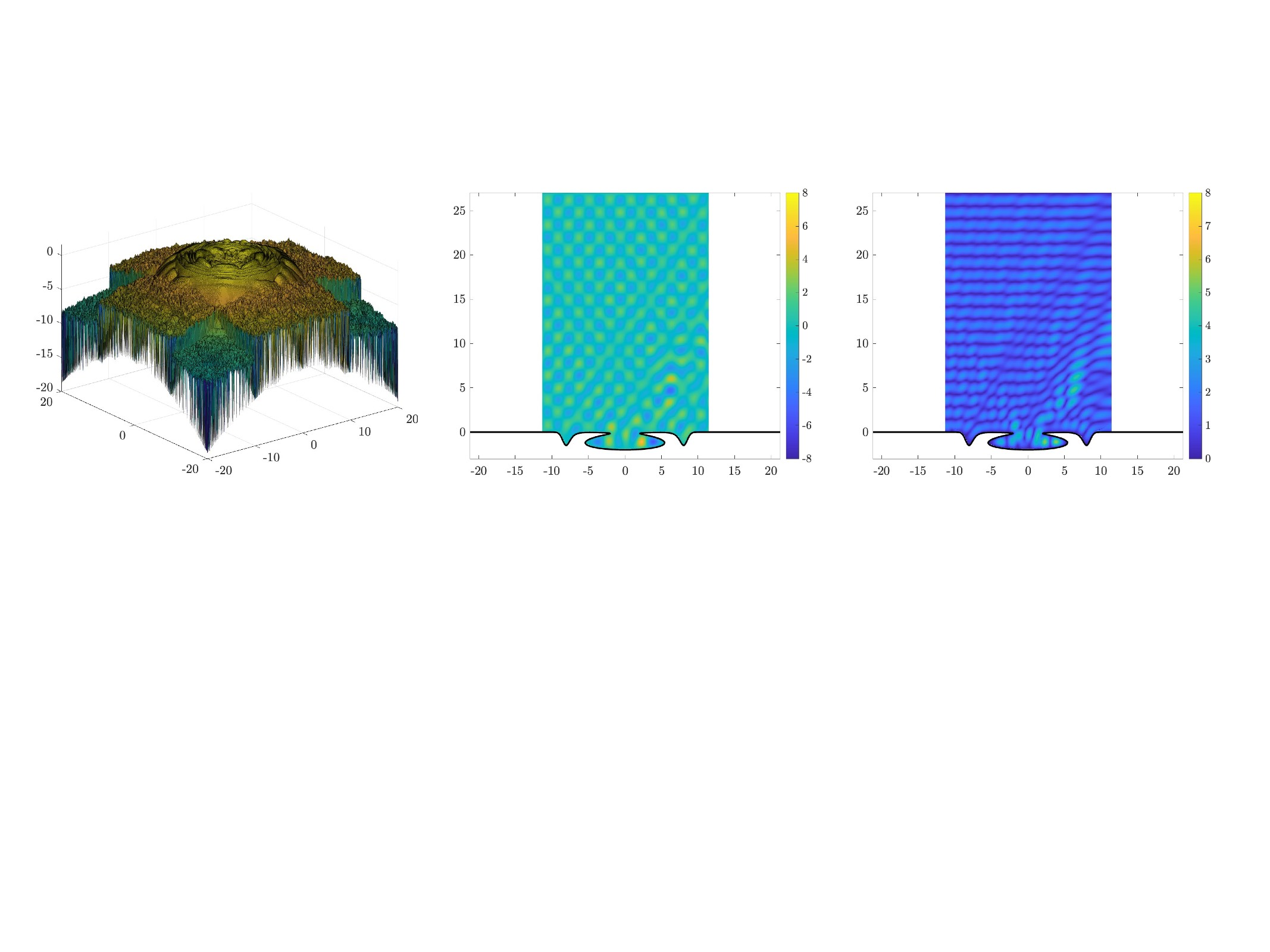}
\caption{Scattering of a plane wave from the geometry of \cref{fig:3d_geom}. The plane wave has unit amplitude and wavevector ${\bf k} = k(1/2,1/2,-1/\sqrt{2})$, with $k = \pi$. {\bf Left}: the $\log_{10}$ absolute value of the density solved for in the complexified BIE. {\bf Middle}: the real part of the computed solution. {\bf Right}: the absolute value of the computed solution.}
\label{fig:3d_pwave}
\end{figure}

\section{Concluding remarks}
In this work, we developed a coordinate complexification approach for the efficient solution of acoustic scattering problem from sound-soft perturbed half-spaces in two and three dimensions. In particular, we prove that the integral equation admits an analytical extension to complex coordinates (in its flat parts), and in appropriate quadrants of the complex plane, the incoming scattering data, and the solution to the integral equation exhibit exponential decay with a rate proportional to the wave number of the problem. The proof relies on the asymptotic behavior of the Green's function, the outgoing nature and analytic extensibility of the data, which together imply the outgoing nature of the solution to the integral equation. 

This observation enables the efficient discretization of the integral equation on the complexified contour, as the integral equation can be truncated at $O(\log{(1/\eps)}/k)$ distance in the infinite flat parts (see~\cref{subsec:trunc-err}), as opposed to $O(1/\eps^{2})$ in order to achieve a precision $\epsilon$. While there exist several other numerical approaches which achieve similar computational complexity, our approach is mathematical rigorous, and numerically easy to implement. We illustrate the accuracy and efficiency of our approach through several numerical examples. 

There are many natural extensions of this work, such as its extension to fast algorithms like the fast multipole methods, and fast direct solvers for problems with complex coordinates. This general approach can be applied to the analysis of dielectric acoustic half-spaces,  perfect electric conductors, as well as impedance and dielectric scattering for electromagnetic applications. In a forthcoming paper we show how this method leads to  accurate and efficient  solvers for layered media with multiple perturbed half-spaces, and open waveguides.

\section{Acknowledgments}
C. Epstein, J. Hoskins, S. Jiang, and M. Rachh would  like to thank the American Institute of Mathematics and, in particular, John Fry for hosting them on Bock Cay during the SQuaREs program, where parts of this work were completed.

%%%%%%%%%%%%%%%%%%%%%%%%%%%%%%%%%%%%%%%%%%%%%%

\bibliographystyle{siam}
\bibliography{journalnames,fmm}

\end{document}